\documentclass[10pts]{amsart}

\usepackage{amssymb}
\usepackage{graphicx}
\usepackage{amsfonts}
\usepackage{amsmath}
\usepackage{amsthm}
\usepackage{fancyhdr}
\usepackage{indentfirst}
\usepackage{hyperref}
\usepackage{comment}
\usepackage{color}
\usepackage{subcaption}
\usepackage{epsfig}
\usepackage{pst-grad} 
\usepackage{pst-plot} 
\usepackage[space]{grffile} 
\usepackage{etoolbox} 
\usepackage{mathrsfs} 

\newtheorem{theorem}{Theorem}[section]
\newtheorem{lemma}[theorem]{Lemma}

\theoremstyle{definition}
\newtheorem{definition}[theorem]{Definition}

\newtheorem{corollary}[theorem]{Corollary}

\newtheorem{proposition}[theorem]{Proposition}

\newtheorem{notation}[theorem]{Notation}
\newtheorem{convention}[theorem]{Convention}

\theoremstyle{remark}
\newtheorem{remark}[theorem]{Remark}

\numberwithin{equation}{section}



\newcommand{\R}{\mathbb{R}}




\newcommand{\ti}{\tilde}






\newcommand{\dist}{\operatorname{dist}}

\newcommand{\Tr}{\operatorname{Tr}}

\begin{document}

\title[Convex free-boundary mean curvature flow]{Contracting convex surfaces by mean curvature flow with free boundary on convex barriers
}
\date{\today}

\author[Sven Hirsch]{Sven Hirsch}
\address{Department of Mathematics, Duke University, Durham, NC 27708-0320, USA}
\email{sven.hirsch@duke.edu}

\author[Martin Li]{Martin Man-chun Li}
\address{Department of Mathematics, The Chinese University of Hong Kong, Shatin, N.T., Hong Kong}
\email{martinli@math.cuhk.edu.hk}

\begin{abstract}
We consider the mean curvature flow of compact convex surfaces in Euclidean $3$-space with free boundary lying on an arbitrary convex barrier surface with bounded geometry. When the initial surface is sufficiently convex, depending only on the geometry of the barrier, the flow contracts the surface to a point in finite time. Moreover, the solution is asymptotic to a shrinking half-sphere lying in a half space. This extends, in dimension two, the convergence result of Stahl for umbilic barriers to general convex barriers. We introduce a new perturbation argument to establish fundamental convexity and pinching estimates for the flow. Our result can be compared to a celebrated convergence theorem of Huisken for mean curvature flow of convex hypersurfaces in Riemannian manifolds.
\end{abstract}

\maketitle

\section{Introduction}
\label{S:intro}

Over the past few decades, geometric flows have blossomed and led to many striking applications in topology and geometry such as the proofs of Poincar\'{e} conjecture in three-dimensional topology by Hamilton \cite{Hamilton82} and Perelman \cite{Perelman1,Perelman2,Perelman3}, the Riemannian Penrose inequality in general relativity by Huisken-Ilmanen \cite{Huisken-Ilmanen01} and the Differentiable Sphere theorem by Brendle-Schoen \cite{Brendle-Schoen09} in Riemannian geometry. 
For all the results above, geometric flows are considered on manifolds and submanifolds without boundary, the behaviour of geometric flows for manifolds with boundary, on the other hand, is much less studied in the literature. 

It has been a longstanding question to define Ricci flow with boundary which is well-posed for general initial data. Recently, there has been some remarkable progress made by Gianniotis \cite{Gianniotis16a, Gianniotis16b}. Short-time existence and regularity were established under certain general geometric boundary conditions which are related to the boundary value problems for Einstein metrics posed by Anderson \cite{Anderson08, Anderson12}. It is an interesting direction to study the long-time behaviour of the flow.

For mean curvature flow, it is relatively easier to define the flow on submanifolds (especially hypersurfaces) with boundary. Two geometric boundary conditions have been most extensively studied. One is Dirichlet boundary condition where the motion of the boundary is prescribed (see for example \cite{White} and the references therein). The other one is Neumann boundary condition where the boundary contact angle is prescribed. When the contact angle is $\frac{\pi}{2}$, this is called Mean Curvature Flow (MCF) with free boundary and is the main object of study in this paper. The fundamental short-time existence and uniqueness for MCF with free boundary was first established by Stahl in \cite{Stahl96a}. The regularity and singularities of the flow were studied later for example, in \cite{Buckland05, Koeller12, Wheeler14} among many other. Certain weak formulations have been introduced in \cite{Giga-Sato93,Mizuno-Tonegawa15,Edelen18}. For mean-convex flow, substantial work has been done by Edelen \cite{Edelen16} and Edelen-Haslhofer-Ivaki-Zhu \cite{Edelen-Haslhofer-Ivaki-Zhu19} extending the foundational convexity estimates of Huisken-Sinestrari \cite{Huisken-Sinestrari99a,Huisken-Sinestrari99b} and regularity theory of White \cite{White00,White03,White15}. Special cases of MCF with free boundary were also studied, for example in the entire graphical case \cite{Wheeler14b}, in the Lorentzian setting \cite{Lambert14} and in the Lagrangian setting \cite{Evans-Lambert-Wood}.

One celebrated classical result of Huisken \cite{Huisken84} says that any convex hypersurfaces in $\mathbb{R}^{n+1}$ shrink to a round point in finite time under MCF. This result is later generalized to the Riemannian setting in \cite{Huisken86} provided that the initial hypersurface is convex enough to overcome the ambient geometry. In the free boundary setting, Stahl \cite{Stahl96b} prove that any convex hypersurface with free boundary lying on a flat hyperplane or a round hypersphere in $\mathbb{R}^{n+1}$ will shrink to a round point under the MCF with free boundary. A natural question is whether Stahl's convergence result can be extended to more general non-umbilic barrier surfaces. In this paper we answer this question affirmatively in dimension two (we refer the readers to Section \ref{S:prelim} for precise definitions).

\begin{theorem}
\label{T:main}
Let $S \subset \mathbb{R}^3$ be a complete, properly embedded oriented surface without boundary satisfying the following uniform bounds: there exist constants $K,L_1,L_2 \geq0$ such that
\begin{equation}
\label{A:Z-bound}
0 \leq \underline{Z}_S \leq \overline{Z}_S \leq K,
\end{equation}
where $\underline{Z}_S,\overline{Z}_S$ are the exterior and interior ball curvature respectively, and
\begin{equation}
\label{A:S-bound}
|\nabla_S A_S| \leq H_SL_1 \qquad \text{ and } \qquad  |\nabla_S^2 \mathring{A}_S| \leq L_2.
\end{equation}
Then there exists a constant $D\geq 0$, depending only on $K$, $L_1$ and $L_2$, such that the following holds: let $\Sigma_0$ be a compact connected surface smoothly immersed in $\mathbb{R}^3$ meeting $S$ orthogonally along its free boundary $\partial \Sigma_0 \subset S$, and suppose that on $\Sigma_0$ we have
\begin{equation}
\label{A:Convexity}
 h_{ij} > D g_{ij},
\end{equation}
then there exists a unique solution $\Sigma_t$ to the free-boundary mean curvature flow on a finite time interval $0 \leq t <T$ and the surfaces $\Sigma_t$ remains convex for all time. 
Furthermore, as $t \to T$, $\Sigma_t$ converges uniformly to half of a ``round point'' $p \in S$ in the sense that there is a sequence of rescalings which converge to a shrinking hemisphere with free boundary lying on a plane.
\end{theorem}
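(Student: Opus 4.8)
The plan is to follow the broad strategy of Huisken's convergence theorems \cite{Huisken84,Huisken86}, adapted to the free-boundary setting: short-time existence, then a priori estimates (preservation of convexity, a pinching estimate, and a gradient estimate), and finally a blow-up analysis identifying the singularity model. By Stahl's short-time existence and uniqueness theorem \cite{Stahl96a}, the free-boundary mean curvature flow starting from $\Sigma_0$ admits a unique smooth solution $\Sigma_t$ on a maximal interval $[0,T)$, so it suffices to show that $\Sigma_t$ stays convex and that its curvature controls the flow up to the first singular time. Every a priori estimate will come from Hamilton's maximum principle for symmetric $2$-tensors \cite{Hamilton82}, the essential new difficulty being the boundary term produced by the free-boundary (orthogonality) condition.

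First I would record the evolution equations. In $\mathbb{R}^3$ one has $\partial_t h_{ij} = \Delta h_{ij} + |A|^2 h_{ij} - 2H h_{ik} h^k_j$, whose reaction term preserves positivity of $h_{ij}$ by the tensorial maximum principle in the interior. The delicate point is the boundary: differentiating $\Sigma_t \perp S$ along $\partial\Sigma_t$ shows (following Stahl \cite{Stahl96b}) that the outward conormal $\mu$ of $\Sigma_t$ coincides with a unit normal of $S$, the surface normal $N$ is tangent to $S$, and one obtains a Robin-type boundary identity of the schematic form
\[
\nabla_\mu h(X,Y) = A_S(\mu,\mu)\, h(X,Y) + \bigl(\text{terms linear in } A_S,\ \nabla_S A_S \text{ and } h\bigr)
\]
for $X,Y$ tangent to $\partial\Sigma_t$. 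When $S$ is umbilic the correction terms are multiples of $g$ with a favorable sign and the Hopf boundary point lemma applies directly, which is exactly Stahl's umbilic argument; for a general convex barrier these terms have no sign, and this is the obstruction that must be overcome.

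The key new ingredient — and the step I expect to be the main obstacle — is the \emph{perturbation argument}: rather than testing the maximum principle on $h_{ij}$ itself, I would work with a perturbed tensor of the form $h_{ij} - \varepsilon H g_{ij} + (\nabla^2_\Sigma f)_{ij}$, where $f$ is an auxiliary function on $\mathbb{R}^3$ constructed from the geometry of $S$ (for instance adapted to the signed distance to $S$ near the barrier), chosen so that its Hessian along $\Sigma_t$ exactly absorbs the uncontrolled boundary terms and leaves a boundary inequality of the correct sign, while its contribution to the interior reaction term is dominated. Here the hypotheses \eqref{A:Z-bound} and \eqref{A:S-bound} are used to bound $A_S$ and its first derivatives (via $K$, $L_1$) and the higher derivatives needed later (via $L_2$), and the threshold $D = D(K,L_1,L_2)$ is precisely what guarantees that the perturbed tensor is positive initially and that its evolution keeps it so; in particular convexity is preserved for all $t<T$. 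Once this is in place, the same perturbed-quantity technique applied to $|\mathring{A}|^2/H^2$ (or to $|A|^2 - \varepsilon^{-1}H^{2-\delta}$ in Huisken's style) yields the pinching estimate $|\mathring{A}|^2 \le C H^{2-\delta}$, and a Stampacchia iteration on the evolution of $|\nabla A|^2$, with the boundary term again tamed by the perturbation, gives the gradient estimate.

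Finally I would run the blow-up analysis. Convexity forces the enclosed region to shrink, so $T<\infty$ and $\Sigma_t$ collapses to a single point $p$; since $\partial\Sigma_t \subset S$ and $\operatorname{diam}\Sigma_t \to 0$, necessarily $p \in S$. Rescaling $\Sigma_t$ about $p$ by $(T-t)^{-1/2}$ and using the pinching and gradient estimates together with a Stahl-type compactness result for free-boundary flows, any subsequential limit is a smooth, convex, embedded self-shrinker with free boundary on the plane $T_p S$ — the barrier $S$ flattening to its tangent plane under the rescaling — and, by the pinching estimate, totally umbilic. Hence the limit is a round shrinking hemisphere meeting a plane orthogonally, which is the asserted asymptotics. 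I expect the perturbation construction in the convexity step to be the crux; given the estimates, the extinction and blow-up analysis should then follow the established free-boundary framework of Stahl \cite{Stahl96b} and Edelen–Haslhofer–Ivaki–Zhu \cite{Edelen-Haslhofer-Ivaki-Zhu19}.
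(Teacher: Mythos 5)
Your overall architecture (short-time existence via Stahl, preservation of convexity, pinching, gradient estimate, blow-up) matches the paper's, and you correctly identify the crux: the boundary terms that the umbilic argument of Stahl cannot handle for a general convex barrier. But the perturbation you propose at that crux would not work, and this is a genuine gap rather than a presentational difference.

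The obstruction at the boundary is not a sign problem in a Robin condition for the diagonal entries; it is that $A$ does not diagonalize along $\partial\Sigma$ and that $N(|A|^2)$ contains the term $h_{12}\nabla_1 h_{12}$, which is not controllable by lower-order quantities. By Lemma \ref{L:A-A^S} the offending cross term is $h_{12}=-A_S(\nu,\partial_2)$, which depends explicitly on the normal $\nu$ of the \emph{evolving} surface, not just on the geometry of $S$. A correction of the form $(\nabla^2_\Sigma f)_{ij}$ for a fixed ambient function $f$ cannot cancel this: in Fermi coordinates $(\nabla^2_\Sigma f)_{12}=D^2f(\partial_1,\partial_2)+h_{12}\,D_\nu f$, so cancelling $h_{12}$ would force $D_\nu f=-1$ for every admissible tangent direction $\nu\in T_pS$ at a boundary point, which is impossible for a fixed scalar $f$ (it fails already for $\nu$ and $-\nu$). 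The paper's perturbation is instead a zeroth-order tensor $P^\Sigma(u,v)=P(u,v,\nu,\nu,\nu)$ built by contracting a fixed ambient $(0,5)$-tensor (constructed from the extensions of $A_S$, $g_S$, $\nu_S^\flat$) with the normal of $\Sigma_t$; this dependence on $\nu$ is exactly what makes $\tilde h_{12}=0$ and, crucially, $\nabla_1\tilde h_{11}=\nabla_1\tilde h_{22}=0$ modulo the original Stahl--Edelen formulas, so that the tensor maximum principle sees only the two decoupled diagonal directions at the boundary. A second ingredient you do not mention, and which is where the two-dimensionality enters, is that the boundary normal derivatives $\nabla_1\tilde h_{11}$ and $\nabla_1\tilde h_{22}$ each contain a term $h^S_{22}H$ with a favorable sign coming from convexity of $S$ together with the preserved positive lower bound on $H$; without extracting this term the boundary inequality does not close even after the cross terms are killed. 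Two smaller inaccuracies: the Stampacchia iteration in the paper is used for the pinching estimate $|\tilde A|^2-\tfrac12\tilde H^2\le C\tilde H^{2-\sigma}$ (not for the gradient estimate, which is a pointwise maximum-principle argument on a carefully weighted test function involving $|\nabla H-h^S_{\nu\nu}H\nu_S^T|^2$), and finiteness of $T$ follows from the preserved lower bound $H\ge H_0>0$ and $(\partial_t-\Delta)H=|A|^2H\ge\tfrac12H^3$ rather than from a shrinking enclosed region.
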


\begin{figure}[h]
\centering
\includegraphics[height=1.5in]{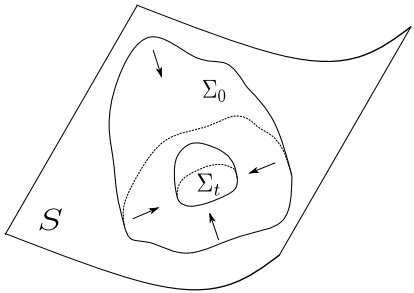}
\caption{A convex surface with free boundary contained in a convex barrier surface is evolving under mean curvature flow to a shrinking hemisphere.}
\label{F:1}
\end{figure}

\begin{remark} The assumptions (\ref{A:Z-bound}) and (\ref{A:S-bound}) are clearly satisfied for some $K,L_1,L_2$ for any compact convex barrier surface $S$. Furthermore, it will be apparent from the proof that the constant $D$ in (\ref{A:Convexity}) is close to zero if the barrier $S$ is close to a flat plane or a round sphere (in the $C^2$-sense). Therefore, we recover in particular the convergence result of Stahl in \cite{Stahl96b} for umbilic barriers.
\end{remark}

\begin{remark}
The geometry of the barrier surface $S$ can be thought of as an obstruction to convergence to a round point under the flow and the initial surface has to be sufficiently convex to overcome this obstruction. This can be compared to Huisken's result in \cite{Huisken86} where the obstruction arises from the geometry of the ambient Riemannian manifold. We expect that our results also hold in general Riemannian $3$-manifolds other than $\mathbb{R}^3$. For simplicity, we just present our result in the Euclidean case.
\end{remark}

\begin{remark}
Theorem \ref{T:main} also has the topological implication that any sufficiently convex free boundary surface $\Sigma_0$ is diffeomorphic to a disk. In fact, this also follows from Gauss-Bonnet as the induced metric on $\Sigma_0$ has positive Gauss curvature with convex boundary. If Theorem \ref{T:main} holds in higher dimensions, then it would be a non-trivial topological consequence of the flow. A version of the converse of the statement was established for spherical barrier by Ghomi-Xiong \cite{Ghomi-Xiong19}. It would be interesting to see if similar results hold for other convex barriers, provided that the surface is sufficiently positively curved (see \cite[Note 1.4]{Ghomi-Xiong19}).
\end{remark}

We would like to point out the differences with our main result in comparison with Huisken's convergence result \cite{Huisken86} in Riemannian manifolds. In \cite{Huisken86}, the surface has to be sufficiently convex depending on the zero-th and first order derivatives of the ambient curvature. The ambient space, when it is non-compact, is required to have a positive lower bound on the injectivity radius although his convergence result does not depend explicitly on this lower bound. In our main theorem, the convexity constant $D$ depends up to first order derivatives of the curvatures of the barrier surface $S$ as well as the second derivatives of the trace-free second fundamental form of $S$. Moreover, the ball curvature bounds in (\ref{A:Z-bound}) implies a positive lower bound on the boundary injectivity radius of $S$ and our convergence result depends explicitly on this bound. 

We now outline the main ideas of our proof of Theorem \ref{T:main}. As in many of the results for geometric flows, the major analytic tool is the maximum principle which first and second order conditions hold at any interior local minimum/maximum point. However, on (sub)-manifolds with boundary, the extrema can happen on the boundary at which we only get a first order inequality. This presents a major difficulty to deal with geometric flows on manifolds with boundary. In \cite{Stahl96b}, the barrier surface is totally umbilic, which can be exploited to avoid unwanted cross terms in the normal derivatives of the second fundamental form and hence the maximum principle can still be applied. However, if the barrier is not umblic, there are additional cross terms which are not controllable by lower order terms so the arguments in \cite{Stahl96b} are not sufficient.

To overcome these difficulties for general convex barriers, we use a perturbation argument of the second fundamental form which first appeared in \cite{Huisken-Sinestrari99b} (and more recently in \cite{Edelen16} and \cite{Edelen-Haslhofer-Ivaki-Zhu19}) by adding a suitably chosen perturbation tensor defined by
\begin{align*}
P^\Sigma(U,V):= & ( A_S(U,\nu) \nu_S^\flat(V) + A_S(V,\nu) \nu_S^\flat(U)) \; g_S(\nu,\nu)   \\
& -( g_S(U,\nu) \nu_S^\flat(V) + g_S(V,\nu) \nu_S^\flat(U) ) \; A_S(\nu,\nu)
\end{align*}
and where $A_S$ and $\nu_S$ are extended to $\R^3$ as explained in Section \ref{SS:barrier}. The perturbation tensor above kills off the cross terms of the second fundamental form along the boundary so that a simpler version of maximum principle \cite[Lemma 3.4]{Stahl96a} can be applied. Our perturbation tensor can be regarded as a refinement of the one used in \cite{Edelen16} that has better first order property along the boundary and moreover vanishes identically for the case of totally umbilic barriers. Various estimates for the perturbation tensor have to be done carefully so that the estimates depend only on the constants appearing in (\ref{A:Z-bound}) and (\ref{A:S-bound}).

Finally, we comment on the assumptions of Theorem \ref{T:main} regarding the dimension restriction and the convexity of the barrier surface. The only place where the dimension restriction comes in is to prove the convexity and pinching estimates in Section \ref{S:convexity} and \ref{S:pinching}. The boundary normal derivatives contain terms which can be combined in dimension two to give the mean curvature, whose positive lower bound is preserved under the flow when the barrier is convex. If we drop the convexity of the barrier surface, most of our arguments still go through (at places with a further perturbation argument) with a worse constant depending on $S$. For example, one can still prove that the mean curvature still blows up in finite time, provided that the surface is initially sufficiently convex. To keep our arguments relatively shorter and concise, we wish to address these issues in another forthcoming work.

The organization of the paper is as follows. In Section \ref{S:prelim}, we give a precise quantitative description of the barrier surface $S$ and a way to extend tensors from $S$ to all of $\mathbb{R}^3$. We also recall some fundamental facts about mean curvature flow with free boundary. In Section \ref{S:perturbed2ff}, we define the perturbation tensor and establish various foundation estimates which will be used crucially in the rest of our paper. In Section \ref{S:convexity}, we prove that convexity is preserved up to a fixed multiplicative factor provided that the surface is convex enough initially. A similar result was then established for the pinching of second fundamental form in Section \ref{S:pinching}. In Section \ref{S:pinching-A-circle}, we use Stampacchia iteration (generalized to the free boundary setting by \cite{Edelen16}) to prove the pinching estimate for the traceless second fundamental form. Finally, we establish the gradient bound for mean curvature in Section \ref{S:H-gradient-estimate}, from which our main result Theorem \ref{T:main} then follows.

\vspace{1ex}

\textbf{Acknowledgements.} 
This work was first carried out while the first author visited the Chinese University of Hong Kong in the summer of 2018. We appreciate the hospitality of the Mathematics Department there for providing a stimulating environment.
The authors would like to thank Prof. Richard Schoen and Prof. Shing-Tung Yau for their interest in this work. We also thank Simon Brendle, Binglong Chen, Yng-Ing Lee, Mao-Pei Tsui, Yuanlong Xin, Jonathan Zhu, Xiping Zhu for many useful comments and insightful discussions. M. L. is substantially supported by a research grant from the Research Grants Council of the Hong Kong Special Administrative Region, China [Project No.: CUHK 14323516] and CUHK Direct Grant [Project Code: 4053338].

\section{Preliminary results}
\label{S:prelim}

In this section, we give the precise definitions and notations that will be used for the rest of the paper. We begin with a quantitative description of the barrier surface and a way to extend tensors on $S$ to the entire $\mathbb{R}^3$. We then recall some basic facts about free-boundary mean curvature flow. Throughout this work, $\mathbb{R}^3$ is equipped with the Euclidean metric $\langle \cdot, \cdot \rangle$ with norm $|\cdot|$ and the flat connection $D$. We sometimes identify vectors and co-vectors when no ambiguity arises.

\subsection{The barrier surface}
\label{SS:barrier}

Throughout this paper, we let $S \subset \mathbb{R}^3$ be a properly embedded smooth surface (not necessarily compact nor connected) without boundary. We call $S$ the \emph{barrier surface} or simply the \emph{barrier}. Geometric quantities pertaining to the barrier surface $S$ will be indicated with a sub- or superscript, whichever is more convenient. Since $S$ is properly embedded, it is always orientable \cite{Samelson69} and we can fix a smooth global unit normal $\nu_S$. Recall from \cite[Definition 1]{Andrews-Langford-McCoy13} the notion of \emph{interior} and \emph{exterior ball curvature} at a point $p \in S$ defined respectively by
\begin{equation}
\label{E:int-ball-curv}
\overline{Z}_S(p):= \sup \left\{ \frac{2 \langle p-q,\nu_S(p) \rangle}{|p-q|^2} : q \in S, \; q \neq p \right\},
\end{equation}
\begin{equation}
\label{E:int-ball-curv}
\underline{Z}_S(p):= \inf \left\{ \frac{2 \langle p-q,\nu_S(p) \rangle}{|p-q|^2} : q \in S, \; q \neq p \right\}.
\end{equation}
Geometrically, $\overline{Z}_S(p)$ is the principal curvature of the largest ``interior ball'' (with respect to $\nu_S$) which touches $S$ at $p$ and $\underline{Z}_S(p)$ is such for the largest ``exterior ball''. We remark that $S$ does not necessarily bound a region in $\mathbb{R}^3$. The concept of ``interior'' and ``exterior'' is only defined locally relative to the ``outward'' unit normal $\nu_S$. 

With respect to the unit normal $\nu_S$, we define the second fundamental form of $S$ to be the symmetric $(0,2)$-tensor $A_S:TS \times TS \to \mathbb{R}$ where
\[ A_S(u,v):= - \langle D_u v, \nu_S \rangle\]
for any smooth tangential vector fields $u,v$ on $S$. The \emph{principal curvatures} are given by the eigenvalues of $A_S$ viewed as an endomorphism on $TS$. With our sign convention, $S$ is \emph{locally convex} if and only if $A_S$ is non-negative definite at every $p \in S$. Note that this is implied (see \cite[Proposition 4]{Andrews-Langford-McCoy13}) by the inequality $\underline{Z}_S(p) \geq 0$ at any $p \in S$. On the other hand, since a ball of curvature less than the largest principal curvature cannot touch $S$ from interior at $p$, we always have $\overline{Z}_S(p)$ at least as big as the largest principal curvature of $S$ at $p$. Therefore, the uniform bound (\ref{A:Z-bound}) implies that $S$ is a locally convex surface with principal curvatures at most $K$ everywhere. Note that $\overline{Z}_S$ and $\underline{Z}_S$ contain both information on the curvatures of $S$ (which is local) and the boundary injectivity radius \footnote{The boundary injectivity radius of $S$ is the maximal $\rho>0$ such that a $\rho$-tubular neighborhood of $S$ is diffeomorphic to $S \times (-\rho,\rho)$ under the normal exponential map of $S$.} of $S$ (which is non-local). For example, the slab region bounded by two parallel planes $S$ (appropriately oriented) which are of distance $r$ apart has zero principal curvatures but $\overline{Z}_S=2/r$.

In any local coordinates on $S$, we denote the components of $A_S$ by $\{h^S_{ij}\}$ and its covariant derivative $\nabla_S A_S$ by $\{ \nabla^S_k h^S_{ij}\}$. The induced metric on $S$ from $\mathbb{R}^3$ is denoted by $g_S$, which is a $(0,2)$-tensor on $S$ represented by $\{g^S_{ij}\}$ in local coordinates. We will use $g_S$ to raise or lower indices of tensors, e.g. $(h_S)^i_{\phantom{i}j}=g_S^{ik} h^S_{kj}$, adopting Einstein summation convention to sum over repeated indices. For any $p \geq 1$, $\nabla_S^p A_S$ denotes the $p$-th covariant derivative of $A_S$. Moreover, $|T|^2$ denotes the squared norm of any tensor, e.g. $|A_S|^2=h_S^{ij} h^S_{ij}$. We use $\mathring{A}_S$ to denote the trace-free second fundamental form of $S$ defined by
\begin{equation}
\label{E:A-o}
\mathring{A}_S:= A_S - \frac{1}{2} H_S g_S
\end{equation}
where $H_S:=\Tr A_S$ is the mean curvature of $S$.

For any $x \in \mathbb{R}^3$, we denote the minimal distance of $x$ to $S$ in $\mathbb{R}^3$ by $\dist (x,S)$. For any $\epsilon>0$, we denote the $\epsilon$-tubular neighborhood of $S$ by
\[ S_\epsilon:= \{x \in \mathbb{R}^3 \; : \; \dist(x,S) < \epsilon\}.\] 
From (\ref{A:Z-bound}) we know that for any $x \in S_{K^{-1}}$, there exists a unique $p_x \in S$ such that $\dist(x,S)=|x-p_x|$. Moreover, the (signed) \emph{distance function from $S$}, $d:S_{K^{-1}} \to \mathbb{R}$ defined by
\[ d(x):= \left\{ \begin{array}{cl}
-\dist(x,S) & \text{if $\langle x-p_x,\nu_S(p_x) \rangle \leq 0$}, \\
\dist(x,S) & \text{if $\langle x-p_x,\nu_S(p_x) \rangle \geq 0$}.
\end{array} \right.   \]
is a $C^\infty$ function on $S_{K^{-1}}$ satisfying the following at any $x \in S_{K^{-1}/2}$ (see \cite[Section 3]{Buckland05} and \cite{Gilbarg-Trudinger}):
\begin{equation}
\label{E:d-estimate}
Dd(x)=\nu_S(p_x), \quad D^2 d(x)(Dd(x),\cdot)=0 \quad \text{ and } \quad |D^2 d (x)| \leq 2 K. 
\end{equation}

Next, we want to extend $d$ to the whole $\mathbb{R}^3$ using a cut-off function. To this end, we fix a smooth cutoff function $\chi \in C^\infty(\mathbb{R})$ such that $\chi$ is decreasing with $\chi \equiv 1$ on $(-\infty,1)$, $\chi \equiv 0$ on $(2,+\infty)$, $\chi' \geq -2$ and $|\chi''| \leq 5$ everywhere. Using this cutoff function, we define the \emph{truncation function} $\chi_K:\mathbb{R}^3 \to \mathbb{R}$ by 
\[ \chi_K(x):= \chi \left( \frac{|d(x)|}{K^{-1}/4} \right). \]
Note that $\chi_K$ is a $C^\infty$ function on $\mathbb{R}^3$ which is supported in the tubular neighborhood $S_{K^{-1}/2}$, and $\chi_K \equiv 1$ in $S_{K^{-1}/4}$. Moreover, we have the following at any $x \in \mathbb{R}^3$:
\[ D \chi_K (x) = \left(4K \frac{d(x)}{|d(x)|} \chi' \right) Dd(x), \]
\[ D^2 \chi_K(x) =  \left(4K \frac{d(x)}{|d(x)|} \chi' \right) D^2d(x) + 16K^2 \chi'' Dd(x) \otimes Dd(x).\]
From above and (\ref{E:d-estimate}), we obtain easily the bounds $|D\chi_K| \leq 8K$ and $|D^2 \chi_K| \leq 96K^2$.

With the truncation function $\chi_K$ above, we now describe a general procedure to extend any $(0,q)$-tensor field $\phi$ on $S$ to the entire $\mathbb{R}^3$. At each $p \in S$, we first extend $\phi$, which is a $q$-linear form on $T_pS$, to a $q$-linear form on $T_p\mathbb{R}^3 \cong \mathbb{R}^3$ by defining $\phi(u_1,\cdots,u_q)=\phi(u_1^T, \cdots, u_q^T)$ where $(\cdot)^T$ denotes the orthogonal projection from $T_p \mathbb{R}^3$ onto $T_pS$. Then, we extend the $q$-linear form $\phi$ by parallel transport along normal geodesics emanating from $S$. Finally, we multiply $\phi$ by the truncation function $\chi_K$ so that it is a smooth $(0,q)$-tensor field globally defined on $\mathbb{R}^3$. By abuse of notation, we still denote the extended tensor field as $\phi$. Note that after the extension, $\phi$ is supported in the tubular neighborhood $S_{K^{-1}/2}$ and satisfies the bounds (curvatures of $S$ also appear when one differentiates $(\cdot)^T$)
\[ \|\phi\|_{C^0(\mathbb{R}^3)} \leq \|\phi\|_{C^0(S)}, \quad \| D \phi\|_{C^0(\mathbb{R}^3)} \leq 8K \|\phi\|_{C^0(S)}+ \|\nabla_S \phi\|_{C^0(S)},\]
\[  \|D^2 \phi\|_{C^0(\mathbb{R}^3)} \leq (96K^2+2L_1K) \|\phi\|_{C^0(S)}+16K \|\nabla_S \phi\|_{C^0(S)} + \|\nabla_S^2 \phi\|_{C^0(S)}.  \]

\begin{notation}
From now on, we use parenthesis to denote the dependence of constants. For example, $C(K)$ denotes any positive constant depending only on the constant $K$ in (\ref{A:Z-bound}). We use $C(S)$ denote any positive constant depending only on the constants $K$, $L_1$, $L_2$ in (\ref{A:Z-bound}) and (\ref{A:S-bound}). Moreover, we write $f=O(g)$ to mean that $|f| \leq C(S) |g|$.
\end{notation}

For example, when $k=0,1,2$, we have $\|\phi\|_{C^k(\mathbb{R}^3)} \leq C(K,L_1) \|\phi\|_{C^k(S)}$ for the extension of any $(0,q)$-tensor $\phi$ on $S$ to $\mathbb{R}^3$. By the same procedure, we can also extend any vector field, e.g. $\nu_S$, defined on $S$ to the entire $\mathbb{R}^3$ satisfying the same bounds.

\subsection{Free-boundary mean curvature flow}
\label{SS:FBMCF}

We now recall some preliminary results about free-boundary mean curvature flow. First, we restrict to the class of surfaces meeting (from inside) the barrier surface $S$ orthogonally.

\begin{definition}
\label{D:free-boundary}
Let $\Sigma$ denote a smooth two-dimensional surface with non-empty boundary $\partial \Sigma$. A \emph{free boundary surface} (with respect to $S$) is a smooth immersion $F:\Sigma \to \mathbb{R}^3$ such that
\[ F(\partial \Sigma) \subset S \quad \text{ and } \quad F_*N=\nu_S \circ F\]
where $N$ is the outward unit normal of $\partial \Sigma$ in $\Sigma$, with respect to $\Sigma$ equipped with the induced metric from the immersion $F$.
\end{definition}

Note that in case $S$ bounds a region $G$ in $\mathbb{R}^3$, from the definition above a free boundary surface does not have to lie completely either in $\overline{G}$ or $\overline{\mathbb{R}^3 \setminus G}$. The condition $F_*N=\nu_S \circ F$ means that along $F(\partial \Sigma) \subset S$, the surface $F(\Sigma)$ has to lie locally on one side of $S$ (as specified by the normal $\nu_S$). The surface $F(\Sigma)$, however, can intersect $S$ somewhere in its interior. 

We will assume throughout this paper that $\Sigma$ is orientable and we fix a choice of unit normal $\nu$ on $\Sigma$ with respect to the immersion $F:\Sigma \to \mathbb{R}^3$. We use $g$ and $A$ to denote respectively the induced metric and second fundamental form on $\Sigma$, where
\[ A(u,v):=  - \langle D_u v, \nu \rangle \]
for any tangential vector fields $u, v$ on $\Sigma$. The mean curvature of $\Sigma$ is denoted by $H:=\Tr A$. Similar to our previous discussion for the barrier surface, we denote, in any local coordinates of $\Sigma$, the components of $A$ by $\{h_{ij}\}$ and its covariant derivative $\nabla A$ by $\{ \nabla_k h_{ij}\}$. The induced metric on $\Sigma$ is denoted by $g$, whose components in local coordinates are given by $\{g_{ij}\}$. We will use $g$ to raise or lower indices of tensors. We write $\nabla$ and $\Delta$ for the intrinsic covariant derivative and Laplacian on $\Sigma$ respectively. Let $dV$ be the area form on $\Sigma$. There is a useful relationship between the second fundamental form $A$ of the free boundary surface $\Sigma$ and the second fundamental form $A_S$ of the barrier surface $S$ along the free boundary $\partial \Sigma$.

\begin{lemma}
\label{L:A-A^S}
Let $F:\Sigma \to \mathbb{R}^3$ be a free boundary surface with respect to $S$. For any $p \in \partial \Sigma$ and $X \in T_p \partial \Sigma$, we have $A(N,X)=-A^S(\nu\circ F,F_* X)$.
\end{lemma}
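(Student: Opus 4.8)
The claim is a purely pointwise statement at a free boundary point $p \in \partial\Sigma$, involving no flow, so the plan is to set up an adapted orthonormal frame and expand both sides using the free boundary condition $F_*N = \nu_S \circ F$. First I would fix $p \in \partial\Sigma$ and choose a unit tangent vector $X \in T_p\partial\Sigma$ extended to a vector field along $\partial\Sigma$; note that $\{F_*X, F_*N\}$ spans $T_{F(p)}S$ since $\partial\Sigma$ maps into $S$ and $F_*N = \nu_S$ is normal to $\Sigma$ but tangent to $S$. The key observation is that along $\partial\Sigma$ the normal $\nu$ of $\Sigma$ is a \emph{tangential} vector field on $S$ (because $\nu \perp F_*N = \nu_S$), and symmetrically $\nu_S = F_*N$ is a tangential vector field on $\Sigma$ along $\partial\Sigma$.

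The computation itself is then a two-line application of the definitions. By definition $A(N,X) = -\langle D_X (F_*N), \nu\rangle$, where $D$ is the flat connection on $\R^3$ and I differentiate in the direction $F_*X$ tangent to $\partial\Sigma$ (using symmetry of $A$ to put the derivative on the $N$-slot). Since $F_*N = \nu_S \circ F$ along $\partial\Sigma$, this equals $-\langle D_{F_*X} \nu_S, \nu\rangle$. Now $\nu$ is tangent to $S$, so by the very definition of the second fundamental form of the barrier, $-\langle D_{F_*X}\nu_S, \nu\rangle = \langle D_{F_*X}\nu, \nu_S\rangle$ (using $\langle \nu,\nu_S\rangle \equiv 0$ along $\partial\Sigma$ and differentiating) $= -A^S(\nu \circ F, F_*X)$, reading $A^S(u,v) = -\langle D_u v, \nu_S\rangle$ with $u = \nu$, $v = F_*X$ both tangent to $S$. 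Care is needed that the derivatives $D_{F_*X}$ are taken along curves lying in the relevant submanifolds so that the Gauss/Weingarten-type identities apply; this is where the free boundary condition is used twice, once to identify $F_*N$ with $\nu_S$ and once to know $\nu$ is tangent to $S$.

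I expect essentially no obstacle here — this is a standard identity of the same flavor as the classical fact that at a free boundary point $A(N,X) = A^S(\nu, X)$ up to sign conventions; the only thing to get right is bookkeeping of the sign conventions declared in the excerpt ($A(u,v) = -\langle D_u v,\nu\rangle$ and $A_S(u,v) = -\langle D_u v,\nu_S\rangle$) and the fact that at the boundary one may freely move the covariant derivative between the two orthonormal slots $N$ and $X$ by symmetry of the second fundamental form combined with differentiating the orthogonality relations $\langle \nu, F_*X\rangle = 0$ and $\langle \nu_S, F_*X\rangle = 0$ along $\partial\Sigma$. The one subtlety worth a sentence in the write-up is to justify that $X \in T_p\partial\Sigma$ suffices (rather than a general tangent vector to $\Sigma$), which is exactly what makes both $\nu$ and $\nu_S$ available as tangent vectors to $S$ along the curve of differentiation.
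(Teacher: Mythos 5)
Your proof is correct and is exactly the paper's argument: the paper's proof is precisely "differentiate $\langle \nu,\nu_S\circ F\rangle\equiv 0$ along $\partial\Sigma$ and use $F_*N=\nu_S\circ F$," which is what you carry out in detail (with the signs and the symmetry of $A$ and $A_S$ handled correctly). No issues.
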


\begin{proof}
It follows easily by differentiating the identity $\langle \nu,\nu_S \circ F \rangle \equiv 0$ along $\partial \Sigma$ and using the free boundary condition $F_*N \equiv \nu_S \circ F$. See \cite[Proposition 4.5]{Edelen16} or \cite[Proposition 2.2]{Stahl96b}.
\end{proof}

We consider in this paper the mean curvature flow within the class of free boundary surfaces. It was first introduced by Huisken \cite{Huisken89} (in the graphical case) and Stahl \cite{Stahl96a}. Note that the definition in \cite{Stahl96a} does not require the surfaces to locally lie on one side of $S$ near their boundary.

\begin{definition}
Let $F_0:\Sigma \to \mathbb{R}^3$ be a free boundary surface as in Definition \ref{D:free-boundary}. We say that $F: \Sigma \times [0,T) \to \mathbb{R}^3$ is a solution to the \emph{free-boundary mean curvature flow} if for each $t \in [0,T)$, $F_t:=F(\cdot,t):\Sigma \to \mathbb{R}^3$ is a free boundary surface, $F(\cdot,0)=F_0$ and
\begin{equation}
\label{E:FBMCF}
\frac{\partial F}{\partial t} = -H \nu.
\end{equation}
By abuse of notation, we often write $\Sigma_t:=F_t(\Sigma)$.
\end{definition}

The fundamental short time existence and uniqueness for the free-boundary mean curvature flow was established by Stahl in \cite{Stahl96a}. For any smooth compact initial data $F_0:\Sigma \to \mathbb{R}^3$, there exist a unique solution to (\ref{E:FBMCF}) defined on a maximal time interval $[0,T)$. The solution is smooth for $t>0$ and $C^{2+\alpha,1+\alpha/2}$ up to $t=0$, with arbitrary $\alpha \in (0,1)$. Moreover, if $T<+\infty$, then $\sup_{\Sigma_t} |A| \to \infty$ as $t \to T$ \cite[Theorem 1.3]{Stahl96a}. It was shown recently by Guo \cite{Guo} that either $\sup_{\Sigma_t} |H| \to \infty$ or Length$(\partial \Sigma_t) \to \infty$ as $t \to T$, extending the remarkable work of Li and Wang \cite{Li-Wang19} to the free boundary setting.

We first recall the evolution equations for some basic geometric quantities on $\Sigma_t$. Note that we will suppress as usual the explicit dependence on $t$ for simplicity when no ambiguity arises. 

\begin{lemma}
\label{L:MCF}
Let $\{\Sigma_t\}_{t\in[0,T)}$ be a solution to the free-boundary mean curvature flow. Then, we have the following evolution equations for $t>0$,
\begin{itemize}
\item[(i)] $\partial_t g_{ij}=-2H h_{ij}$ 
\item[(ii)] $\partial_t \nu =\nabla H$
\item[(iii)] $\partial_t  dV=-H^2 dV$
\item[(iv)] $\left( \partial_t -\Delta \right) h_{ij}=-2 H h_{im}h^m_{\phantom{m}j}  +|A|^2 h_{ij}$.
\item[(v)] $\left(\partial_t  -\Delta \right) H=|A|^2 H$.
\item[(vi)] $\left(\partial_t -\Delta \right) |A|^2 = 2|A|^4 -2 |\nabla A|^2$.
\item[(vii)] $\left(\partial_t -\Delta \right) \left( |A|^2 - \frac{1}{2}H^2 \right) = 2|A|^2 \left( |A|^2 - \frac{1}{2}H^2 \right) -2 \left( |\nabla A|^2 -\frac{1}{2} |\nabla H|^2 \right)$.
\end{itemize}
\end{lemma}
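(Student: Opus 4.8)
Lemma \ref{L:MCF} collects the standard evolution equations for MCF, and in the free-boundary setting they are derived in exactly the same way as in the closed case (cf. Huisken \cite{Huisken84}): all the identities are pointwise consequences of the flow equation \eqref{E:FBMCF} together with the Gauss and Codazzi equations, and the presence of a boundary plays no role since no integration is involved. The plan is therefore to record the short-time smoothness of the flow for $t>0$ (so that all derivatives make sense), and then verify items (i)--(vii) by direct computation in local coordinates, deducing each one from the previous.

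First I would establish (i)--(iii), the evolution of the metric, the unit normal and the area form. For (i), differentiate $g_{ij}=\langle \partial_i F,\partial_j F\rangle$ in $t$, commute $\partial_t$ with $\partial_i$, substitute $\partial_t F=-H\nu$, and use $\langle \partial_i\nu,\partial_j F\rangle = h_{ij}$ (the Weingarten relation in $\R^3$ with the flat connection $D$). For (ii), note $|\nu|^2=1$ forces $\partial_t\nu \perp \nu$, so $\partial_t\nu$ is tangential; pairing with $\partial_i F$ and using $\langle \nu,\partial_i F\rangle\equiv 0$ gives $\langle \partial_t\nu,\partial_i F\rangle = \partial_i H$, hence $\partial_t\nu=\nabla H$. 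Item (iii) follows from (i) since $dV=\sqrt{\det g}\,dx$ and $\partial_t\log\sqrt{\det g}=\tfrac12 g^{ij}\partial_t g_{ij}=-H^2$.

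Next I would prove (iv), the evolution of $h_{ij}$, which is the key computation; (v), (vi), (vii) then follow from it by tracing and by elementary tensor algebra. For (iv), write $h_{ij}=-\langle D_{\partial_i}\partial_j F,\nu\rangle = \langle \partial_j F, D_{\partial_i}\nu\rangle$ or equivalently $h_{ij}=\langle \partial_i\partial_j F,\nu\rangle$, differentiate in $t$, commute derivatives, substitute $\partial_t F=-H\nu$ and $\partial_t\nu=\nabla H$ from (ii), and then convert the resulting $\nabla^2 H$ terms into $\Delta h_{ij}$ plus curvature terms via Simons' identity $\Delta h_{ij}=\nabla_i\nabla_j H + H h_{im}h^m_{\ j} - |A|^2 h_{ij}$ (valid pointwise on any surface in $\R^3$, being a consequence of the Codazzi equations and the commutation formula for second covariant derivatives, with the Riemann tensor of $\Sigma$ expressed through $A$ by Gauss). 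Collecting terms yields $(\partial_t-\Delta)h_{ij}=-2H h_{im}h^m_{\ j}+|A|^2 h_{ij}$. Taking the trace with $g^{ij}$, using (i) for $\partial_t g^{ij}=2H h^{ij}$, gives (v). For (vi), compute $(\partial_t-\Delta)|A|^2$ from $|A|^2=h^{ij}h_{ij}$ using (iv), $\partial_t g^{ij}=2Hh^{ij}$, and the Bochner-type identity $\Delta|A|^2=2\langle A,\Delta A\rangle + 2|\nabla A|^2$; the lower-order terms cancel precisely as in \cite{Huisken84}, leaving $2|A|^4-2|\nabla A|^2$. Finally (vii) is the combination $(\text{vi})-H\cdot(\text{v})-|\nabla H|^2$ rearranged: using $(\partial_t-\Delta)H^2=2H(\partial_t-\Delta)H-2|\nabla H|^2=2|A|^2H^2-2|\nabla H|^2$, subtract half of this from (vi).

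The only genuine obstacle is bookkeeping: correctly commuting $\partial_t$ with spatial derivatives (which is legitimate since $F$ is smooth for $t>0$) and keeping track of the curvature terms produced when passing from $\nabla^2 H$ to $\Delta h_{ij}$ via Simons' identity. There is no analytic subtlety and, crucially, nothing here sees the free boundary: every equation in Lemma \ref{L:MCF} is an identity holding at each point of $\Sigma_t$, including boundary points, and the boundary conditions of Definition \ref{D:free-boundary} will only enter later (in Sections \ref{S:convexity}--\ref{S:H-gradient-estimate}) when the maximum principle is applied. I would therefore simply refer to \cite[Section 3]{Huisken84} or \cite{Stahl96b} for the detailed verification and state that the computations are identical in our setting.
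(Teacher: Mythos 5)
Your proposal is correct and matches the paper's treatment: the paper proves Lemma \ref{L:MCF} simply by citing \cite[Section 3]{Huisken84}, and your outline is exactly the standard derivation from that reference, together with the (correct) observation that these are pointwise identities unaffected by the free boundary. The only nitpick is a minor sign inconsistency between your two expressions for $h_{ij}$ (with the paper's convention $A(u,v)=-\langle D_u v,\nu\rangle$ one has $h_{ij}=-\langle \partial_i\partial_j F,\nu\rangle$), which does not affect the argument.
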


\begin{proof}
See \cite[Section 3]{Huisken84}.
\end{proof}

Besides the evolution equations, we also need the boundary normal derivatives of various geometric quantities. We first recall the following fundamental result on the mean curvature, which holds for any positive time.

\begin{lemma}
\label{L:N-H}
Along $\partial \Sigma$, we have $N(H)=h^S_{\nu \nu} H$ for $t >0$
\end{lemma}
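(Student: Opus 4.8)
The plan is to derive this boundary identity by differentiating the free boundary condition along the flow. First I would record the purely algebraic consequence of the free boundary condition: along $\partial \Sigma$ one has $\langle \nu, \nu_S \circ F \rangle \equiv 0$, since $F_*N = \nu_S \circ F$ is tangent to $\Sigma$ while $\nu$ is normal to it; equivalently, $\nu$ is tangent to $S$ along $\partial \Sigma$. Because the solution is smooth for $t>0$, I may then differentiate the identity $\langle \nu(p,t), \nu_S(F(p,t)) \rangle = 0$ in $t$ at an arbitrary fixed $p \in \partial \Sigma$.

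The time derivative splits into two terms. In the first, $\langle \partial_t \nu, \nu_S \circ F \rangle$, I substitute $\partial_t \nu = \nabla H$ from Lemma \ref{L:MCF}(ii) and use $\nu_S \circ F = F_*N$; since $\nabla H$ is the pushforward of the intrinsic gradient of $H$, this term equals $\langle \nabla H, F_*N \rangle = N(H)$. In the second term, $\langle \nu, \tfrac{d}{dt}(\nu_S \circ F) \rangle$, I use that the boundary velocity is $\partial_t F = -H\nu$, which is tangent to $S$ by the previous paragraph, so $\tfrac{d}{dt}(\nu_S \circ F) = D_{-H\nu}\nu_S = -H\, D_\nu \nu_S$; pairing with $\nu$ and invoking the Weingarten relation $\langle D_u \nu_S, v \rangle = A_S(u,v)$ (which follows from the sign convention $A_S(u,v) = -\langle D_u v, \nu_S \rangle$ of Section \ref{SS:barrier}) gives $-H\, A_S(\nu,\nu) = -H\, h^S_{\nu\nu}$. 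Setting the sum of the two terms equal to zero yields $N(H) = h^S_{\nu\nu} H$.

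This argument has no serious obstacle; it is essentially the computation of \cite[Section 2]{Stahl96b}. The only points requiring care are the sign conventions — that with $\partial_t F = -H\nu$ and $A(u,v) = -\langle D_u v, \nu \rangle$ one indeed has $\partial_t \nu = +\nabla H$ (as recorded in Lemma \ref{L:MCF}(ii)), and the analogous Weingarten sign for $S$ — together with the harmless identification of $\nabla H$, viewed as a vector in $\mathbb{R}^3$, with $F_*(\operatorname{grad}_g H)$ when it is paired against $F_*N$. One could alternatively reach the same conclusion intrinsically from Lemma \ref{L:A-A^S} combined with the Codazzi equation, but the flow-based derivation above seems the shortest.
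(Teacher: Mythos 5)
Your proof is correct and follows the same route as the paper: differentiate the free boundary orthogonality relation $\langle \nu, \nu_S \circ F\rangle \equiv 0$ in time, substitute $\partial_t \nu = \nabla H$ from Lemma \ref{L:MCF}(ii), and identify the second term via the Weingarten relation for $S$ using that $\partial_t F = -H\nu$ is tangent to $S$ along $\partial\Sigma$. The sign bookkeeping you flag is handled consistently, so nothing further is needed.
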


\begin{proof}
We obtain the desired formula by differentiating the free boundary condition $\langle N,\nu \rangle \equiv 0$ along $\partial \Sigma$ with respect to $t$ and using Lemma \ref{L:MCF} (ii). See, for example,  \cite[Proposition 4.3]{Edelen16} or \cite[Proposition 2.1]{Stahl96b} for details.
\end{proof}

Using the evolution equation and the boundary normal derivative of $H$, we obtain the following useful corollary by the maximum principle (c.f. \cite[Theorem 3.1 and 3.2]{Stahl96b}). Note that our barrier surface $S$ is locally convex (i.e. $h^S_{\nu \nu} \geq 0$) under assumption (\ref{A:Z-bound}).

\begin{corollary}
\label{C:H-preserve}
Any non-negative lower bound of $H$ is preserved under the flow, i.e. if $H \geq H_0 \geq 0$ at $t=0$ for some constant $H_0 \geq 0$, then $H \geq H_0$ for all $t>0$.
\end{corollary}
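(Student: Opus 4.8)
The plan is to derive the statement $N(H) = h^S_{\nu\nu}H$ along $\partial\Sigma$ combined with evolution equation (v) from Lemma~\ref{L:MCF}, and then invoke a scalar parabolic maximum principle adapted to the free boundary setting (of the kind used in \cite{Stahl96a,Stahl96b}). First I would set $u := H$ and recall that by Lemma~\ref{L:MCF}(v) it satisfies $(\partial_t - \Delta)u = |A|^2 u$ on $\Sigma_t$ for $t>0$, which is a linear parabolic equation for $u$ with bounded (on each compact time subinterval) coefficient $|A|^2 \geq 0$. Since $|A|^2 \geq 0$ and we are asking that a non-negative \emph{lower} bound be preserved, the zeroth-order term has the favorable sign: writing $v := u - H_0$ with $H_0 \geq 0$, we get $(\partial_t - \Delta)v = |A|^2 v + |A|^2 H_0 \geq |A|^2 v$, so $v$ is a supersolution of the homogeneous equation $(\partial_t - \Delta)w = |A|^2 w$, and we want to show $v \geq 0$ is preserved.

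The key point is the boundary behavior. At a point $p \in \partial\Sigma$ where $v$ (equivalently $H$) attains a would-be negative interior-in-time minimum on the closure $\overline{\Sigma}$, the boundary version of the maximum principle (Hopf-type lemma, e.g.\ \cite[Lemma 3.4]{Stahl96a}) requires controlling the inward normal derivative. By Lemma~\ref{L:N-H} we have $N(H) = h^S_{\nu\nu} H$ along $\partial\Sigma$, where $N$ is the outward unit conormal. Under assumption (\ref{A:Z-bound}) the barrier is locally convex, so $h^S_{\nu\nu} \geq 0$; hence at a boundary point where $H < 0$ we would have $N(H) = h^S_{\nu\nu} H \leq 0$, i.e.\ $H$ is non-increasing in the outward direction, equivalently non-decreasing inward — which is exactly the sign needed to run Hopf's lemma and conclude the minimum cannot be attained on the boundary with a strict normal derivative inequality unless $H$ is constant. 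More precisely, the standard argument shows: if $\min_{\overline{\Sigma_t}} H$ first drops below $H_0$, the minimum is attained either at an interior point (excluded by the interior parabolic maximum principle applied to $v$, using $|A|^2 \geq 0$) or at a boundary point, where the Neumann-type condition $N(H) = h^S_{\nu\nu}H$ with $h^S_{\nu\nu}\geq 0$ is compatible with, rather than obstructing, the conclusion, allowing one to absorb the boundary into the maximum principle. I would cite \cite[Theorem 3.1, 3.2]{Stahl96b} for the precise formulation in this setting and note that the proof there goes through verbatim since it only uses $h^S_{\nu\nu} \geq 0$, which is guaranteed here by (\ref{A:Z-bound}) rather than by umbilicity of $S$.

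The main obstacle is purely a matter of citing the correct form of the maximum principle with Neumann-type boundary data: one must ensure that the boundary condition $N(H) = h^S_{\nu\nu}H$ has the right sign relative to the direction of the conormal and the type of extremum (minimum for a lower bound), so that the boundary contributes no unwanted term. There is no genuine PDE difficulty here — the zeroth-order coefficient $|A|^2$ is non-negative, which is the good sign for propagating a non-negative lower bound, and convexity of $S$ ($h^S_{\nu\nu} \geq 0$) makes the boundary condition cooperative. Everything else is a direct application of Lemma~\ref{L:MCF}(v), Lemma~\ref{L:N-H}, and the discussion following (\ref{A:Z-bound}) establishing $h^S_{\nu\nu} \geq 0$.
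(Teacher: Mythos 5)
Your overall route coincides with the paper's: combine the evolution equation $(\partial_t-\Delta)H=|A|^2H$ from Lemma \ref{L:MCF}(v) with the boundary identity $N(H)=h^S_{\nu\nu}H$ from Lemma \ref{L:N-H}, use that $h^S_{\nu\nu}\ge 0$ by (\ref{A:Z-bound}), and invoke Stahl's maximum principle; the paper's proof is essentially that citation, and your interior analysis (favorable sign of the zeroth-order coefficient $|A|^2$, reduction to $v=H-H_0$) is fine.

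However, your explicit boundary discussion has the logic of Hopf's lemma reversed, and as written that step would not close. At a boundary point where a negative spatial minimum of $v=H-H_0$ is attained, Hopf's lemma asserts that the \emph{outward} derivative is strictly negative, $N(v)<0$; to exclude such a point you must show the boundary condition forces $N(v)\ge 0$. The case you analyse --- ``$H<0$ there, hence $N(H)=h^S_{\nu\nu}H\le 0$'' --- is \emph{consistent} with Hopf's conclusion and produces no contradiction, so it is not ``exactly the sign needed.'' The contradiction comes from the opposite regime: since the threshold satisfies $H_0\ge 0$, at the relevant touching point one has $H\ge 0$, whence $N(v)=h^S_{\nu\nu}(v+H_0)\ge 0$ (using $h^S_{\nu\nu}\ge0$ and $H_0\ge0$), against Hopf's strict inequality. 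The residual possibility that the standard $\epsilon$-perturbation of the test function forces slightly negative values of $H$ at a boundary minimum (e.g.\ when $H_0=0$) is not handled by Hopf alone but by the Robin-type maximum principle: since $N(v)\ge h^S_{\nu\nu}v$ with $0\le h^S_{\nu\nu}\le K$ bounded, replacing $v$ by $v\,e^{-\lambda d}$ (with $d$ the signed distance to $S$ and $\lambda\ge K$) makes the effective Robin coefficient non-positive, so that a negative boundary minimum does force a non-negative outward derivative and Hopf applies. This is precisely what is packaged in \cite[Lemma 3.4]{Stahl96a} and \cite[Theorems 3.1, 3.2]{Stahl96b}, so your appeal to those results is the right move, but the sign analysis preceding it should be corrected before the argument can stand on its own.
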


Note that when $H_0>0$, $H$ must in fact blow up in finite time with $T \leq H_0^{-2}$. Note that Lemma \ref{L:N-H} uses the evolution equation of $\nu$ under mean curvature flow and hence does not hold for a general free boundary surface $\Sigma$. 

The boundary normal derivatives of the second fundamental form $A$ were computed by \cite[Theorem 2.4]{Stahl96b} and \cite[Lemma 6.1]{Edelen16}. We recall their formula here, specializing to two-dimensional surfaces. 

\begin{convention}
At any point $p \in \partial \Sigma$, we always choose local Fermi coordinates in $\Sigma$ around $p$ so that along $\partial \Sigma$, $\partial_1 \equiv N$ and $\partial_2$ is a unit vector field tangent to $\partial \Sigma$. Moreover, the integral curves of $\partial_1$ are geodesics in $\Sigma$. We refer the readers to \cite[Section 2]{Marques05} for a more detailed discussion about Fermi coordinates.
\end{convention}

\begin{lemma}
\label{L:A-N}
At every $p \in \partial \Sigma$, we have for $t>0$
\begin{equation}
\label{E:N-h11}
\nabla_1 h_{11}=2h^S_{22}H +(h^S_{\nu \nu} -3 h^S_{22}) h_{11}+ \nabla_\nu^S h^S_{22}
\end{equation}
\begin{equation}
\label{E:N-h22}
\nabla_1 h_{22}= h^S_{22} H + (h^S_{\nu \nu} -3 h^S_{22}) h_{22} - \nabla_\nu^S h^S_{22} .
\end{equation}
\end{lemma}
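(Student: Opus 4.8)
The plan is to reduce both identities to the Codazzi equation together with the geometry of the free-boundary contact, working along $\partial\Sigma$ in the Fermi frame of the Convention, where $\partial_1=N$, $\partial_2=T$ is the unit tangent to $\partial\Sigma$, at $p$ these are orthonormal, and the frame derivatives are $\nabla_{\partial_2}\partial_1=-\kappa_g T$, $\nabla_{\partial_2}\partial_2=\kappa_g N$ with $\kappa_g$ the geodesic curvature of $\partial\Sigma$ in $\Sigma$. Since $\R^3$ is flat, Codazzi makes $\nabla A$ fully symmetric; in particular $\nabla_1 h_{22}=\nabla_2 h_{12}$, while adding $\nabla_1 h_{11}$ and using Lemma \ref{L:N-H} gives $\nabla_1 h_{11}+\nabla_1 h_{22}=\nabla_1 H=h^S_{\nu\nu}H$. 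Hence it suffices to compute $\nabla_2 h_{12}$: \eqref{E:N-h22} will come out directly, and \eqref{E:N-h11} then follows by subtraction using the two-dimensional trace identity $H=h_{11}+h_{22}$.

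First I would record two elementary ``geodesic curvature'' identities by decomposing the ambient derivative $D_T T$ of the boundary curve in two ways. Relative to $\Sigma$ (normal $\nu$) one has $D_T T=\kappa_g N-h_{22}\nu$, and relative to $S$ (normal $\nu_S$) one has $D_T T=\kappa_g^S\nu-h^S_{22}\nu_S$, where $\kappa_g^S$ is the geodesic curvature of $\partial\Sigma$ inside $S$ and $\nu$ is the unit normal of $\partial\Sigma$ inside $S$ (just as $N$ is its normal inside $\Sigma$). Since $N=\nu_S$ along $\partial\Sigma$ and $\{\nu,\nu_S\}$ is orthonormal, matching components gives $\kappa_g=-h^S_{22}$ and $\kappa_g^S=-h_{22}$.

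Next I would expand $\nabla_2 h_{12}=(\nabla_{\partial_2}A)(N,T)=T\!\big(A(N,T)\big)-A(\nabla_T N,T)-A(N,\nabla_T T)$, substitute $A(N,T)=-h^S_{\nu 2}$ from Lemma \ref{L:A-A^S} together with the frame derivatives, and obtain $-T(h^S_{\nu 2})+\kappa_g(h_{22}-h_{11})$. The remaining term $T(h^S_{\nu 2})=T\!\big(A^S(\nu,T)\big)$ I would expand as an $S$-covariant derivative, $T\!\big(A^S(\nu,T)\big)=(\nabla_T^S A^S)(\nu,T)+A^S(\nabla_T^S\nu,T)+A^S(\nu,\nabla_T^S T)$, with $\nabla_T^S\nu=-\kappa_g^S T$ and $\nabla_T^S T=\kappa_g^S\nu$. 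Substituting $\kappa_g=-h^S_{22}$, $\kappa_g^S=-h_{22}$, collecting terms, and finally using the Codazzi equation on $S$ to replace $(\nabla_T^S A^S)(\nu,T)$ by $\nabla_\nu^S h^S_{22}$, everything assembles into \eqref{E:N-h22}, after which \eqref{E:N-h11} follows as above. Alternatively, one may simply quote the general boundary-gradient formula of \cite[Theorem 2.4]{Stahl96b} / \cite[Lemma 6.1]{Edelen16} and specialize to $n=2$, the same trace and $S$-Codazzi relations being precisely what collapses the general cross terms.

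The only real difficulty here is bookkeeping: one has to keep separate the boundary-tangent direction ``$2$'' as seen in $\Sigma$ (where it pairs with $A$, $\nabla$, and $\kappa_g$) from the same direction seen in $S$ (where it pairs with $A^S$, $\nabla^S$, and $\kappa_g^S$), and one must spot that the $h^S_{\nu 2}$ cross terms produced along the way recombine, through Codazzi on $S$, into the single clean term $\nabla_\nu^S h^S_{22}$. This recombination is exactly where $n=2$ enters: with only one tangential boundary direction, $H=h_{11}+h_{22}$ and $H_S=h^S_{\nu\nu}+h^S_{22}$ let the would-be cross terms be organized into mean curvatures --- precisely the structure that is exploited in Sections \ref{S:convexity} and \ref{S:pinching}.
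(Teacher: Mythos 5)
Your proposal is correct and, modulo presentation, is the same argument as the paper's: the paper's proof of this lemma simply quotes \cite[Lemma 6.1]{Edelen16} (equivalently \cite[Theorem 2.4]{Stahl96b}) and specializes to $n=2$ using $H=h_{11}+h_{22}$ and $H^S=h^S_{22}+h^S_{\nu\nu}$, which is exactly the alternative you mention at the end, while your self-contained computation is precisely what that cited lemma encapsulates. I checked the details: $\kappa_g=-h^S_{22}$, $\kappa_g^S=-h_{22}$, the Codazzi symmetry $\nabla_1h_{22}=\nabla_2h_{12}$, and the $S$-Codazzi replacement of $(\nabla^S_TA^S)(\nu,T)$ by $\nabla^S_\nu h^S_{22}$ all assemble into (\ref{E:N-h22}), and subtracting from $\nabla_1H=h^S_{\nu\nu}H$ (Lemma \ref{L:N-H}, which is where $t>0$ and the flow enter, exactly as the paper remarks) gives (\ref{E:N-h11}).
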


\begin{proof}
It follows immediately from \cite[Lemma 6.1]{Edelen16} and that $H=h_{11}+h_{22}$, $H^S=h^S_{22}+h^S_{\nu \nu}$. Note that (\ref{E:N-h11}) uses the evolution equation in Lemma \ref{L:N-H}. On the other hand, (\ref{E:N-h22}) does not use any evolution equation and thus holds for any free boundary surface (without being a solution to the free-boundary mean curvature flow).
\end{proof}

From Lemma \ref{L:A-N} we see that the expression
\begin{align}
\label{E:N-A2} N (|A|^2) = & 6h^S_{22} H h_{11} +2 (h^S_{\nu\nu}-2 h^S_{22}) |A|^2 -4 h^S_{22}h^2_{11}  \\
& +2\nabla^S_\nu h^S_{22} (h_{11}-h_{22}) +4 h_{12} \nabla_1 h_{12} - 4 (h^S_{\nu\nu} -2 h^S_{22}) h^2_{12}  \nonumber
\end{align}
contains a term involving $\nabla_1 h_{12}$, which is not controllable. Note that when $S=\mathbb{S}^2$, the above formula simplifies to (note that $h_{12}=h^S_{2\nu}=0$)
\begin{equation}
\label{E:N-A^2}
N(|A|^2)=6 Hh_{11} -2|A|^2 -4h_{11}^2 =O(|A|^2)
\end{equation}
which implies $N |A| =O(|A|)$ and hence $N|A|=O(H)$ if $\Sigma$ is convex (as $|A| \leq H$). This observation is crucial in establishing the pinching estimate for $S=\mathbb{S}^2$ in \cite{Stahl96b}. Controlling the terms in (\ref{E:N-A2}) is the major difficulty to generalize Stahl's umbilic convergence result in \cite{Stahl96b} to general convex barrier surfaces. We will handle this by introducing a new perturbed second fundamental form with desired properties at $S$ up to first order.

\section{Perturbed second fundamental form}
\label{S:perturbed2ff}

In this section, we define our perturbation tensor which is the crucial new ingredient to deal with non-umbilic barriers. We carefully derive its basic properties and estimates which are required for later sections.

\subsection{The perturbation tensor}
\label{SS:perturbation}

We define an auxiliary $(0,5)$-tensor $P$ on $\mathbb{R}^3$ which is solely determined by the barrier surface $S$. Recall that $A_S$ and $g_S$ are symmetric $(0,2)$-tensors on $S$. By the extension procedure described in Section \ref{S:prelim}, we consider them as $(0,2)$-tensors defined on $\mathbb{R}^3$. On the other hand, at each $p \in S$, consider the co-vector $\nu_S^\flat$ dual to the vector $\nu_S$ at $p$ (i.e. $\nu_S^\flat(\cdot):=\langle \nu_S,\cdot \rangle$). We have then a $1$-form on $\mathbb{R}^3$ defined only along $S$. By a similar extension procedure as in Section \ref{SS:barrier} but without doing the tangential projection, we can regard $\nu_S^\flat$ as a $1$-form globally defined on $\mathbb{R}^3$ satisfying the following uniform bounds:
\[ |\nu^\flat_S| \leq 1, \; |D \nu^\flat_S| \leq 9K \quad \text{ and } \quad |D^2 \nu^\flat_S| \leq 104 K^2 +2 K L_1.\]

With these extensions understood, we make the following definition.

\begin{definition}
\label{D:P}
Let $P$ be the $(0,5)$-tensor on $\mathbb{R}^3$ defined by
\begin{align*}
P(U,V,X,Y,Z):= & ( A_S(U,X) \nu_S^\flat(V) + A_S(V,X) \nu_S^\flat(U)) \; g_S(Y,Z)   \\
& -( g_S(U,X) \nu_S^\flat(V) + g_S(V,X) \nu_S^\flat(U) ) \; A_S(Y,Z).  
\end{align*}
\end{definition}

By our way of extension, $P$ is clearly smooth and supported in the tubular neighborhood $S_{K^{-1}/2}$. One can also express $P$ in terms of the tracefree second fundamental form $\mathring{A}_S$ defined in (\ref{E:A-o}),
\begin{align*}
P(U,V,X,Y,Z)= & ( \mathring{A}_S(U,X)\nu_S^\flat(V)  + \mathring{A}_S(V,X)  \nu_S^\flat(U) ) \; g_S(Y,Z)   \\
& -( g_S(U,X)\nu_S^\flat(V) + g_S(V,X)  \nu_S^\flat(U) ) \; \mathring{A}_S(Y,Z)  
\end{align*}
From this expression it follows that $P$ vanishes identically whenever $S$ is \emph{totally umbilic} (i.e. $\mathring{A}_S \equiv 0$). It is clear from the definition that $P$ is symmetric in the first two slots, i.e. $P(U,V,X,Y,Z)=P(V,U,X,Y,Z)$. Moreover, we have the following estimates (note that $(D \nu_S^\flat)^T = A_S$ along $S$):
\[  \|P\|_{C^0(\mathbb{R}^3)} \leq 4 \|\mathring{A}_S\|_{C^0(S)}, \qquad \|DP\|_{C^0(\mathbb{R}^3)} \leq C(K) \|\mathring{A}_S\|_{C^1(S)}, \]
\[ \|D^2P\|_{C^0(\mathbb{R}^3)} \leq C(K,L_1) \|\mathring{A}_S\|_{C^2(S)}\]
Therefore, we have 
\begin{equation}
\label{E:P-C^2}
\|P\|_{C^2(\mathbb{R}^3)} \leq C(K,L_1) \|\mathring{A}_S\|_{C^2(S)} \leq C(S).
\end{equation}

There are some nice additional properties of $P$ which hold for points lying on the barrier surface $S$. 

\begin{lemma}
\label{L:P-0}
The following holds on $S$:
\begin{itemize}
\item[(i)] $P(U,V,X,Y,Z)=0$ whenever one of the $X$, $Y$ and $Z$ belongs to $(TS)^\perp$,
\item[(ii)] $P(U,V,X,Y,Z)=0$ whenever $U,V \in TS$,
\item[(iii)] $P(U,V,V,V,V)=0$ whenever $V \in TS$,
\item[(iv)] $P(\nu_S,\nu_S,X,Y,Z)=0$,
\item[(v)] $D_{\nu_S} P=0$.
\end{itemize}
\end{lemma}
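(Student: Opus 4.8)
The plan is to verify each of the five properties by direct evaluation of the defining formula for $P$ in Definition \ref{D:P}, using the key structural facts about the extensions: along $S$, the extended $g_S$ is the Euclidean metric restricted so that $g_S(U,V) = \langle U^T, V^T \rangle$ (hence $g_S(U,V) = 0$ if either $U$ or $V$ is normal); the extended $A_S$ satisfies $A_S(U,V) = A_S(U^T, V^T)$ (hence vanishes if either slot is normal); and $\nu_S^\flat(V) = \langle \nu_S, V \rangle$ with $\nu_S^\flat(V) = 0$ for $V \in TS$ and $\nu_S^\flat(\nu_S) = 1$. I would state these three bookkeeping facts once at the start of the proof and then apply them repeatedly.

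For (i): in the formula for $P(U,V,X,Y,Z)$, the variable $X$ appears only inside $A_S(\cdot,X)$ and $g_S(\cdot,X)$, while $Y,Z$ appear only inside $g_S(Y,Z)$ and $A_S(Y,Z)$. If $X \in (TS)^\perp$, then $A_S(U,X) = A_S(V,X) = 0$ and $g_S(U,X) = g_S(V,X) = 0$, so every term vanishes. If $Y$ (or $Z$) is normal, then $g_S(Y,Z) = 0$ and $A_S(Y,Z) = 0$, again killing every term. For (ii): if $U, V \in TS$ then $\nu_S^\flat(U) = \nu_S^\flat(V) = 0$, and $\nu_S^\flat(U)$ or $\nu_S^\flat(V)$ is a factor in every term, so $P = 0$. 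Item (iii) is the special case $U = V = X = Y = Z = V \in TS$ of (ii) (already $U, V \in TS$ suffices), so it follows immediately. For (iv): take $U = V = \nu_S$. The two terms in the first group combine to $2 A_S(\nu_S, X)\,\nu_S^\flat(\nu_S)\, g_S(Y,Z) = 2 A_S(\nu_S, X)\, g_S(Y,Z)$, but $A_S(\nu_S, X) = A_S(\nu_S^T, X^T) = 0$ since $\nu_S^T = 0$; similarly the second group gives $2 g_S(\nu_S, X)\, A_S(Y,Z) = 0$ since $g_S(\nu_S, X) = \langle \nu_S^T, X^T\rangle = 0$. Hence $P(\nu_S, \nu_S, X, Y, Z) = 0$.

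For (v), the normal derivative $D_{\nu_S} P = 0$ along $S$: this is the one requiring the construction of the extension rather than just its pointwise values. Recall that $A_S$, $g_S$, and $\nu_S^\flat$ were all extended by parallel transport along unit-speed normal geodesics from $S$ (and then multiplied by $\chi_K$, which equals $1$ with $D\chi_K = 0$ on $S_{K^{-1}/4} \supset S$, so the cutoff contributes nothing at first order on $S$). A parallel-transported tensor has vanishing covariant — hence, in $\mathbb{R}^3$ with the flat connection, vanishing ordinary — derivative in the direction of the geodesic, i.e. $D_{\nu_S} A_S = D_{\nu_S} g_S = D_{\nu_S} \nu_S^\flat = 0$ along $S$. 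Since $P$ is an algebraic (tensorial, bilinear-type) expression in these three tensors with constant coefficients, the Leibniz rule gives $D_{\nu_S} P = 0$ along $S$. I expect this last point to be the only one needing a word of care — one must note that $\chi_K \equiv 1$ near $S$ so differentiating the cutoff produces no boundary term, and that although the tangential-projection step in the extension of $g_S$ and $A_S$ introduces curvature terms when differentiating in \emph{tangential} directions, the extension is by honest parallel transport in the \emph{normal} direction so no such terms appear in $D_{\nu_S}$. Everything else is a routine substitution; the main (minor) obstacle is simply keeping the notation straight between the abstract slots $U,V,X,Y,Z$ and their tangential/normal components.
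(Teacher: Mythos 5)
Your treatment of items (i), (ii), (iv) and (v) is correct and is exactly the direct verification the paper intends (its own proof is a one-line remark), and your care with (v) --- noting that the cutoff is identically $1$ near $S$ and that parallel transport along normal lines in flat $\mathbb{R}^3$ kills the normal derivative of each constituent tensor, so the Leibniz rule finishes it --- is the right way to fill in that line.

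However, your justification of (iii) has a genuine gap. Item (iii) asserts $P(U,V,V,V,V)=0$ for $V\in TS$ with \emph{no hypothesis on $U$}; it is not a special case of (ii), which requires \emph{both} $U,V\in TS$. This is not a vacuous distinction: the paper applies (iii) in the proof of Lemma \ref{L:P-def-1} to the term $P_{\nu 1\nu\nu\nu}=P(\nu,\partial_1,\nu,\nu,\nu)$, where $\partial_1=N=\nu_S$ is normal to $S$, so after using the symmetry in the first two slots one is exactly in the situation $U=\nu_S\notin TS$, $V=\nu\in TS$. Your argument via (ii) does not cover this case. The correct argument is a short computation exhibiting the cancellation built into the definition of $P$: since $V\in TS$ gives $\nu_S^\flat(V)=0$, the two terms carrying the factor $\nu_S^\flat(V)$ drop out, and what remains is
\begin{equation*}
A_S(V,V)\,\nu_S^\flat(U)\,g_S(V,V)-g_S(V,V)\,\nu_S^\flat(U)\,A_S(V,V)=0 .
\end{equation*}
This cancellation between the two groups of terms (rather than the vanishing of every individual term) is the actual content of (iii), and it is precisely the structural feature of $P$ that the perturbation is designed to have. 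With that one fix your proof is complete.
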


\begin{proof}
(i) - (iv) follow directly from the definition of $P$ and (v) follows from the way we extend the tensor fields from $S$ to $\mathbb{R}^3$.
\end{proof}

\begin{definition}
\label{D:P-Sigma}
Given any free boundary surface $\Sigma$ with unit normal $\nu$, we define a symmetric $(0,2)$-tensor $P^\Sigma:T\Sigma \times T\Sigma \to \mathbb{R}$ on $\Sigma$ by
\[ P^\Sigma(u,v):= P(u,v,\nu,\nu,\nu), \]
where $P$ is the $(0,5)$-tensor defined on $\mathbb{R}^3$ as in Definition \ref{D:P}.
\end{definition}

Note that as $g^S(\nu,\nu)=1$ and $g^S(u,\nu)=0$ for all $u \in T\Sigma$ along $\partial \Sigma$, our perturbation term reduces to the one considered in \cite[Definition 4.5.1]{Edelen16}: for any $u,v \in T_p\Sigma$ where $p \in \partial \Sigma \subset S$, we have
\begin{equation}
\label{E:P-Sigma}
P^\Sigma(u,v)= A_S(u,\nu) \langle v,\nu_S \rangle +A_S(v,\nu) \langle u, \nu_S \rangle. 
\end{equation}

\begin{lemma}
\label{L:P-def}
Along $\partial \Sigma$, we have
\[ P^\Sigma_{11}=P^\Sigma_{22}=0 \quad \text{ and } \quad P^\Sigma_{12}=-h_{12}.\]
\end{lemma}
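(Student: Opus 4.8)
The plan is to compute $P^\Sigma_{ij}$ directly from formula \eqref{E:P-Sigma}, using the Fermi coordinate convention together with Lemma \ref{L:A-A^S} and the free boundary condition. Recall that along $\partial\Sigma$ we have $F_*N=\nu_S$, i.e. in coordinates $\partial_1=N$ is mapped to $\nu_S$, while $\partial_2$ is tangent to $\partial\Sigma$ and $\nu$ is the unit normal of $\Sigma$. Thus $\langle \partial_1,\nu_S\rangle = 1$ and $\langle \partial_2,\nu_S\rangle = 0$ along $\partial\Sigma$.

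First I would evaluate $P^\Sigma_{11}$. From \eqref{E:P-Sigma}, $P^\Sigma_{11}=2A_S(\partial_1,\nu)\langle\partial_1,\nu_S\rangle = 2A_S(\partial_1,\nu)$. Since $F_*\partial_1=\nu_S$ along the boundary, $A_S(\partial_1,\nu)=A_S(\nu_S,\nu)$, but $A_S$ is a tensor on $S$ (its extension still does the tangential projection in the first slot), and the tangential projection of $\nu_S$ onto $TS$ vanishes. Hence $A_S(\partial_1,\nu)=0$ and $P^\Sigma_{11}=0$. Next, $P^\Sigma_{22}=2A_S(\partial_2,\nu)\langle\partial_2,\nu_S\rangle=0$ since $\langle\partial_2,\nu_S\rangle=0$. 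Finally, $P^\Sigma_{12}=A_S(\partial_1,\nu)\langle\partial_2,\nu_S\rangle + A_S(\partial_2,\nu)\langle\partial_1,\nu_S\rangle = A_S(\partial_2,\nu)$, using again that the two terms collapse via the previous identities. It remains to identify $A_S(\partial_2,\nu)$ with $-h_{12}$: apply Lemma \ref{L:A-A^S} with $X=\partial_2\in T_p\partial\Sigma$, which gives $A(N,\partial_2)=-A^S(\nu,F_*\partial_2)$, i.e. $h_{12}=A(\partial_1,\partial_2)=-A_S(\partial_2,\nu)$, so $P^\Sigma_{12}=A_S(\partial_2,\nu)=-h_{12}$.

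I do not expect any genuine obstacle here; the statement is essentially a bookkeeping consequence of the free boundary condition and Lemma \ref{L:A-A^S}. The one point requiring a little care is the interpretation of $A_S(u,\nu)$ when $u$ or the normal $\nu$ has components transverse to $S$: one must use that the extension of $A_S$ to $\R^3$ involves the tangential projection $(\cdot)^T$ onto $TS$, so that $A_S(\partial_1,\nu)=A_S(\nu_S^T,\nu^T)=A_S(0,\nu^T)=0$, which is why both the $(1,1)$ entry and one of the two terms in the $(1,2)$ entry vanish. Alternatively, one can invoke Lemma \ref{L:P-0}(ii), which already records $P(U,V,X,Y,Z)=0$ when $U,V\in TS$, to kill $P^\Sigma_{22}$ outright, and Lemma \ref{L:P-0}(i) and (iv) to streamline the other cases; but the direct computation from \eqref{E:P-Sigma} is the cleanest route and is the one I would write out.
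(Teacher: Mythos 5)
Your computation is correct and is exactly the argument the paper intends: the paper's proof is the one-line remark that the lemma ``follows easily from (\ref{E:P-Sigma}) and Lemma \ref{L:A-A^S},'' and you have simply written out that calculation, including the correct handling of the tangential projection in the extension of $A_S$ and the sign from Lemma \ref{L:A-A^S}. No gaps.
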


\begin{proof}
It follows easily from (\ref{E:P-Sigma}) and Lemma \ref{L:A-A^S}.
\end{proof}

The perturbation term (\ref{E:P-Sigma}) is already enough for the purpose of proving the convexity estimates in \cite{Edelen16}. However, for our purpose we need a stronger condition at the boundary, which is given by the following lemma.

\begin{lemma}
\label{L:P-def-1}
Along $\partial \Sigma$, we have
\[ \nabla_1 P^\Sigma_{11}=\nabla_1 P^\Sigma_{22}=0.\]
\end{lemma}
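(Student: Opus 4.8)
The plan is to compute $\nabla_1 P^\Sigma_{11}$ and $\nabla_1 P^\Sigma_{22}$ directly along $\partial\Sigma$ by differentiating the ambient $(0,5)$-tensor $P$ in the direction $N = \partial_1$ and then carefully sorting which terms survive. First I would write, for $a \in \{1,2\}$,
\[
\nabla_1 P^\Sigma_{aa} = (D_N P)(\partial_a,\partial_a,\nu,\nu,\nu) + 2 P(\nabla_1\partial_a,\partial_a,\nu,\nu,\nu) + 3 P(\partial_a,\partial_a,\nabla_1\nu,\nu,\nu),
\]
where all covariant derivatives are those of $\Sigma$ with its induced metric and I have used the symmetry of $P$ in its first two slots and in its last three slots when evaluated on the diagonal $(\nu,\nu,\nu)$. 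Since the computation is at a boundary point $p \in \partial\Sigma \subset S$, along the free boundary the unit normal $N$ of $\partial\Sigma$ in $\Sigma$ coincides with $\nu_S$, so $D_N P = D_{\nu_S} P = 0$ by Lemma \ref{L:P-0}(v). That disposes of the first term.

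For the remaining two terms I would use the free-boundary geometry. Along $\partial\Sigma$ we have $\nu_S = N = \partial_1 \in T\Sigma$ and $\nu \in T\Sigma^\perp$ — more precisely $\nu, N$ are both orthogonal to $\partial\Sigma$ in $\R^3$ but $\nu \perp \nu_S$, and $\nu$ itself is tangent to $S$ (since $\langle \nu, \nu_S\rangle = 0$). In the term $P(\nabla_1\partial_a,\partial_a,\nu,\nu,\nu)$, the third, fourth and fifth slots are all filled with $\nu \in TS$; but whenever the last three slots all lie in $TS$, Lemma \ref{L:P-0}(iii) (with $V = \nu$) gives $P(\cdot,\cdot,\nu,\nu,\nu) = 0$ directly — so this term vanishes identically. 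In the last term $P(\partial_a,\partial_a,\nabla_1\nu,\nu,\nu)$, I decompose $\nabla_1\nu$ (the $\Sigma$-covariant derivative of $\nu$, i.e. $(D_N\nu)^T$) into its components along $\partial\Sigma$: by the Fermi-coordinate convention $\nabla_1\nu = h_{1k}g^{kj}\partial_j$ is tangent to $\Sigma$. Writing the third slot in the basis $\{N,\partial_2\} = \{\nu_S, \partial_2\}$, the $\nu_S$-component contributes $P(\partial_a,\partial_a,\nu_S,\nu,\nu)$ which vanishes by Lemma \ref{L:P-0}(i) (third slot in $(TS)^\perp$? no — $\nu_S \in TS$; rather use that the last two slots are $\nu,\nu \in TS$ and apply the bilinearity structure): in fact $P(\partial_a,\partial_a,\nu_S,\nu,\nu)$ has last three slots in $TS$, hence vanishes by Lemma \ref{L:P-0}(iii) again; and the $\partial_2$-component contributes a multiple of $P(\partial_a,\partial_a,\partial_2,\nu,\nu)$, whose last three slots again lie in $TS$, so it vanishes too. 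Hence every term is zero and $\nabla_1 P^\Sigma_{aa} = 0$.

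The one point that requires genuine care — and which I expect to be the main obstacle — is the bookkeeping of \emph{which} covariant derivative appears and of the tangency facts along $\partial\Sigma$: one must be sure that $\nabla_1 P^\Sigma_{aa}$ really expands with only a $D_N$ term plus terms of the form $P(\text{correction},\ldots)$ in which the correction vector lands in a slot making the whole expression fall under one of the cases of Lemma \ref{L:P-0}, and that no Christoffel-symbol cross term sneaks in a component of $\nu$ transverse to $TS$. Since $\nu \in TS$ along $\partial\Sigma$ and $\nabla_1\nu = (D_N\nu)^T$ stays tangent to $\Sigma$ (with components only along $N = \nu_S$ and $\partial_2$, both in $TS$), every correction vector that can appear in the last three slots lies in $TS$, and Lemma \ref{L:P-0}(iii) closes the argument. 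I would state this cleanly by first recording the observation that \emph{along $\partial\Sigma$ the vectors $\nu$, $N = \nu_S$, $\partial_2$ and $\nabla_1\nu$ all lie in $T_pS$}, and then invoking parts (iii) and (v) of Lemma \ref{L:P-0}.
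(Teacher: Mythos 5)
Your overall strategy (expand $\nabla_1 P^\Sigma_{aa}$ along $\partial\Sigma$ and kill each term with Lemma \ref{L:P-0}) is the same as the paper's, but the execution contains genuine errors. First, the expansion formula is wrong. By the Gauss formula $D_{\partial_1}\partial_a=\nabla_1\partial_a-h_{1a}\nu$, and the tangential part $P(\nabla_1\partial_a,\partial_a,\nu,\nu,\nu)$ cancels against the Christoffel correction $-2P^\Sigma(\nabla_1\partial_a,\partial_a)$ in the definition of $\nabla_1 P^\Sigma_{aa}$; what survives is the normal part, namely $-2h_{1a}P(\nu,\partial_a,\nu,\nu,\nu)$, which your formula omits entirely while retaining the term that should have cancelled. (The omitted term does vanish --- for $a=1$ by symmetry in the first two slots plus Lemma \ref{L:P-0}(iii), for $a=2$ by Lemma \ref{L:P-0}(ii) --- but it must appear and be disposed of; this is exactly the $-2h_{11}P_{\nu 1\nu\nu\nu}$, resp. $-2h_{12}P_{\nu 2\nu\nu\nu}$, term in the paper's computation.)

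Second, the vanishing criterion you use repeatedly --- ``the last three slots lie in $TS$, hence $P=0$ by Lemma \ref{L:P-0}(iii)'' --- is not what (iii) says and is false. Part (iii) requires the last \emph{four} arguments to be the same vector $V\in TS$. As a counterexample to your principle, $P(\partial_2,\nu_S,\nu,\nu,\nu)=A_S(\partial_2,\nu)=h^S_{2\nu}$, which is generally nonzero. Relatedly, you assert that $\nu_S\in TS$; in fact $\nu_S$ is the unit normal of $S$, so $\nu_S\in(TS)^\perp$, and the term $P(\partial_a,\partial_a,\nu_S,\nu,\nu)$ vanishes precisely by part (i) --- the option you correctly raised and then wrongly dismissed (for $a=1$ one can also use (iv) since $\partial_1=\nu_S$). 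Each individual term in the correct expansion does vanish, but by the appropriate parts (i), (ii), (iv), (v), or a legitimate use of (iii), not by the blanket principle you invoke; as written, the proof does not establish the lemma.
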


\begin{proof}
By the definition of Fermi coordinates along $\partial \Sigma$, we have 
\[ \nabla_1 \partial_1=0 \quad \text{ and } \quad \nabla_1 \partial_2= h^S_{22} \partial_2.\]
Combining this with the Weingarten equations, we have
\[ D_{\partial_1} \partial_1 = -h_{11} \nu, \quad  D_{\partial_1} \partial_2 = h^S_{22} \partial_2 -h_{12} \nu \quad \text{ and } \quad D_{\partial_1} \nu = h_{11} \partial_1 + h_{12} \partial_2.\]
Therefore, we obtain
\begin{align*}
\nabla_1 P^\Sigma_{11} =& \partial_1P_{11\nu\nu\nu} \\
=& D_1 P_{11\nu\nu\nu} -2 h_{11} P_{\nu 1 \nu \nu \nu}+h_{11} (P_{111\nu\nu}+P_{11\nu 1 \nu}+P_{11\nu \nu 1}) \\
& +h_{12} (P_{112\nu \nu}+P_{11\nu 2 \nu}+P_{11 \nu \nu 2})
\end{align*}
which vanishes by Lemma \ref{L:P-0} (iii) (iv) (v) and that $\partial_1=\nu_S$, and $\nu \in TS$ along $\partial \Sigma$. Similarly,
\begin{align*}
\nabla_1 P^\Sigma_{22} =& D_1 P_{22 \nu \nu \nu}-2 h_{12} P_{\nu 2 \nu \nu \nu} +h_{11} (P_{221\nu\nu}+P_{22\nu 1 \nu}+P_{22\nu \nu 1}) \\
& +h_{12} (P_{222\nu \nu}+P_{22\nu 2 \nu}+P_{22 \nu \nu 2})
\end{align*}
which vanishes by Lemma \ref{L:P-0} (ii) (v) since $\partial_2, \nu \in TS$ along $\partial \Sigma$. 
\end{proof}

We derive now the evolution equation for the perturbation tensor $P^\Sigma$.

\begin{proposition}
\label{P:P-estimate}
Let $\{\Sigma_t\}_{t\in[0,T)}$ be a solution to the free-boundary mean curvature flow. Then, we have the following evolution equation:
\begin{align*}
(\partial_t -\Delta) P^\Sigma_{ij} =& 3|A|^2 P^\Sigma_{ij} +h_{pi} (h_{pk} P^\Sigma_{kj} -H P^\Sigma_{pj}) + h_{pj} (h_{pk} P^\Sigma_{ik} -H P^\Sigma_{ip}) \\
& +2 h_{pi} h_{pk} (P_{\nu jk\nu\nu} +P_{\nu j \nu k \nu}+P_{\nu j \nu \nu k}) \\
& +2 h_{pj} h_{pk} (P_{i\nu k \nu \nu} + P_{i \nu \nu k \nu} +P_{i \nu \nu \nu k}) \\
& -2 h_{pi} h_{pj} P_{\nu \nu \nu \nu \nu} -2 h_{p \ell}h_{pk} (P_{ijk\ell \nu} + P_{ijk\nu \ell} + P_{ij \nu k \ell}) \\
& - D^2_{p,p}  P_{ij\nu\nu\nu} + 2h_{pi} D_p P_{\nu j \nu \nu \nu} +2 h_{pj} D_p P_{i \nu \nu \nu \nu} \\
& -2 h_{pk} (D_p P_{ijk \nu \nu} +D_p P_{ij \nu k \nu} + D_p P_{ij \nu \nu k})
\end{align*}
Moreover, we have the following bounds
\begin{equation}
\label{E:P-bounds}
P^\Sigma= O(1), \quad \nabla P^\Sigma = O(1+|A|), \quad \nabla^2 P^\Sigma= O(1+|A|^2+|\nabla A|),
\end{equation}
\begin{equation}
\label{E:Delta-P-bounds}
(\partial_t -\Delta) P^\Sigma_{ij}=O(1+|A|^2).
\end{equation}
\end{proposition}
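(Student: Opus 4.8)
The plan is to derive the evolution equation by differentiating the definition $P^\Sigma_{ij}=P_{ij\nu\nu\nu}$, which involves the fixed ambient tensor $P$ on $\R^3$ together with the evolving normal $\nu$, and then deduce the $O$-bounds from it together with the $C^2$-bound (\ref{E:P-C^2}) on $P$. First I would compute $\partial_t P^\Sigma_{ij}$. Since $P$ is a fixed tensor field on $\R^3$, the $t$-derivative falls on the three copies of $\nu$ and on the point $F(\cdot,t)$; using $\partial_t F = -H\nu$ and Lemma \ref{L:MCF}(ii) (i.e. $\partial_t\nu=\nabla H$, whose components in a local frame are $g^{pk}\nabla_p H\,\partial_k$) gives
\[
\partial_t P^\Sigma_{ij} = -H\, D_\nu P_{ij\nu\nu\nu} + (\nabla_k H)\big(P_{ij}{}^k{}_{\nu\nu}+P_{ij\nu}{}^k{}_\nu+P_{ij\nu\nu}{}^k\big),
\]
where I am writing raised indices for contraction against the metric. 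Then I would compute $\Delta P^\Sigma_{ij}=g^{pq}\nabla_p\nabla_q P_{ij\nu\nu\nu}$, again distributing each covariant derivative onto the ambient tensor slots and onto $\nu$; here the Weingarten relations $\nabla_p\nu = h_p{}^k\partial_k$ (so that $\nabla_p\nu$ is tangential) and $\nabla_p\partial_k = -h_{pk}\nu + \Gamma\text{-terms}$ produce, upon the second differentiation, the quadratic-in-$A$ terms $h_{pi}h_{pk}(\cdots)$, $h_{pi}h_{pj}P_{\nu\nu\nu\nu\nu}$, $h_{p\ell}h_{pk}(\cdots)$ listed in the statement, as well as the terms $h_{pi}D_pP_{\nu j\nu\nu\nu}$, $h_{pj}D_pP_{i\nu\nu\nu\nu}$ and $h_{pk}D_pP_{ij\cdots}$ coming from one derivative landing on $\nu$ and one on an ambient slot. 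The genuinely second-order ambient term is $-g^{pq}D^2_{p,q}P_{ij\nu\nu\nu}=-D^2_{p,p}P_{ij\nu\nu\nu}$. Finally, the terms $\nabla_kH(\cdots)$ from $\partial_t$ must be cancelled: this is where the evolution equation $(\partial_t-\Delta)H=|A|^2H$ and, more precisely, the commutation/Simons-type identities for $\nabla H$ versus $\nabla A$ are used, exactly as in Huisken's computation in \cite{Huisken84}; collecting the Weingarten-induced zeroth-derivative-of-$P$ contributions and comparing with $(\partial_t-\Delta)h_{ij}$ (Lemma \ref{L:MCF}(iv)) yields the stated curvature-type terms $3|A|^2P^\Sigma_{ij}$, $h_{pi}(h_{pk}P^\Sigma_{kj}-HP^\Sigma_{pj})$ and its symmetrization.

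For the bounds (\ref{E:P-bounds}): $P^\Sigma=O(1)$ is immediate from $\|P\|_{C^0}\le C(S)$ and $|\nu|=1$. For $\nabla P^\Sigma$, differentiating $P_{ij\nu\nu\nu}$ produces $D_k P_{ij\nu\nu\nu}=O(\|P\|_{C^1})=O(1)$ plus three terms in which $\nabla_k\nu=h_k{}^\ell\partial_\ell$ is contracted into an ambient slot, each $O(|A|)$; hence $\nabla P^\Sigma=O(1+|A|)$. Differentiating once more: the purely ambient term is $O(\|P\|_{C^2})=O(1)$; terms with two factors of $\nabla\nu$ are $O(|A|^2)$; the term with $\nabla\nabla\nu$ contributes $\nabla_p(h_p{}^\ell\partial_\ell)$, whose normal part is $h_{p\ell}h_p{}^k$ (bounded by $|A|^2$) and whose tangential part carries $\nabla h = \nabla A$, giving $O(|\nabla A|)$; hence $\nabla^2P^\Sigma=O(1+|A|^2+|\nabla A|)$. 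Finally (\ref{E:Delta-P-bounds}) follows by inspecting the evolution equation just derived: $3|A|^2P^\Sigma_{ij}$ and the $h_{pi}h_{pk}P^\Sigma$-type terms are $O(|A|^2)$; the terms $h_{pi}h_{pk}(\cdots)$, $h_{pi}h_{pj}P_{\nu\nu\nu\nu\nu}$ and $h_{p\ell}h_{pk}(\cdots)$ are $O(|A|^2)\cdot O(1)=O(|A|^2)$; $-D^2_{p,p}P_{ij\nu\nu\nu}=O(\|P\|_{C^2})=O(1)$; and the mixed terms $h_{pi}D_pP_{\nu j\nu\nu\nu}$, $h_{pj}D_pP_{i\nu\nu\nu\nu}$, $h_{pk}D_pP_{ij\cdots}$ are $O(|A|)\cdot O(1)=O(|A|)=O(1+|A|^2)$. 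Summing, $(\partial_t-\Delta)P^\Sigma_{ij}=O(1+|A|^2)$.

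\textbf{Main obstacle.} The routine-but-delicate part is the second computation — correctly tracking all the Weingarten and commutator terms in $\Delta P_{ij\nu\nu\nu}$ and then matching them, together with the $\nabla H$ terms from $\partial_t$, against the closed form in the statement. In particular one must be careful that $D$ (the flat ambient connection, with which $P$, $D^2P$ are measured) and $\nabla$ (the induced connection on $\Sigma_t$) interact through exactly the Gauss--Weingarten formulas, so that e.g. $\nabla_p(P_{ij\nu\nu\nu})$ is $D_pP_{ij\nu\nu\nu}$ plus correction terms from $\nabla_p\partial_i$, $\nabla_p\partial_j$ and $\nabla_p\nu$; organizing this bookkeeping so that the zeroth-order-in-$D P$ Weingarten contributions assemble into the curvature terms $3|A|^2P^\Sigma_{ij}+h_{pi}(h_{pk}P^\Sigma_{kj}-HP^\Sigma_{pj})+h_{pj}(h_{pk}P^\Sigma_{ik}-HP^\Sigma_{ip})$ is the only place real care is needed. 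Once the evolution equation is in hand, the bounds are a direct term-by-term estimate using (\ref{E:P-C^2}), $|\nu|=1$, $|\nabla\nu|=O(|A|)$, and $|\nabla^2\nu|=O(|A|^2+|\nabla A|)$.
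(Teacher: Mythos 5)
Your overall strategy is the same as the paper's: differentiate $P^\Sigma_{ij}=P_{ij\nu\nu\nu}$ in space and in time via the Gauss--Weingarten relations and combine. But two steps as written would not produce the stated identity. First, your displayed formula for $\partial_t P^\Sigma_{ij}$ is incomplete: the time derivative also falls on the tangential slots, since $\partial_t(F_*\partial_i)=D_{\partial_i}(-H\nu)=-(\nabla_iH)\nu-Hh_{i}{}^{k}F_*\partial_k$. This contributes the terms
\[
-(\nabla_i H) P_{\nu j \nu\nu\nu} -(\nabla_j H) P_{i \nu \nu\nu\nu} -Hh_{ip} P^\Sigma_{pj} -H h_{jp} P^\Sigma_{ip},
\]
which you have dropped. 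These are not negligible: the $-Hh_{ip}P^\Sigma_{pj}$ pieces are exactly where the $-HP^\Sigma_{pj}$ and $-HP^\Sigma_{ip}$ terms in the asserted evolution equation come from (the Laplacian produces no such terms), and the $(\nabla_iH)P_{\nu j\nu\nu\nu}$ pieces are needed to cancel identical terms arising in $\Delta P^\Sigma_{ij}$.

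Second, your proposed mechanism for removing the first-order terms is not the right one: neither $(\partial_t-\Delta)H=|A|^2H$ nor any Simons-type identity enters. What actually happens is that $\Delta P^\Sigma_{ij}=\sum_p\nabla_p\nabla_p P^\Sigma_{ij}$ reproduces verbatim the same first-order terms as $\partial_t P^\Sigma_{ij}$, namely $-(\nabla_iH)P_{\nu j\nu\nu\nu}-(\nabla_jH)P_{i\nu\nu\nu\nu}+(\nabla_kH)(P_{ijk\nu\nu}+P_{ij\nu k\nu}+P_{ij\nu\nu k})-HD_\nu P_{ij\nu\nu\nu}$, because the Codazzi equation turns the traced derivative $\sum_p\nabla_p h_{pk}$ into $\nabla_kH$ and the trace $\sum_p h_{pp}$ of the Weingarten corrections gives the $-HD_\nu P$ term; these cancel directly in the difference $\partial_t-\Delta$, leaving only the zeroth- and second-order-in-$DP$ terms listed in the statement. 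Your term-by-term derivation of the bounds (\ref{E:P-bounds}) and (\ref{E:Delta-P-bounds}) is sound once the identity is in hand, except that in the first-derivative count you should also include the $O(|A|)$ contributions $-h_{pi}P_{\nu j\nu\nu\nu}-h_{pj}P_{i\nu\nu\nu\nu}$ from the tangential slots, not only the three normal slots.
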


\begin{proof}
Choose any orthonormal geodesic coordinates $\partial_1,\partial_2$ centered at a point $x \in \Sigma$. Similar to the calculations in \cite[Proposition 5.1]{Edelen16}, we have
\[
\nabla_p P^\Sigma_{ij} = D_p P_{ij\nu \nu \nu} +h_{pk} (P_{ijk \nu \nu}+P_{ij \nu k \nu} +P_{ij \nu \nu k}) -h_{pi} P_{\nu j \nu \nu \nu} -h_{pj} P_{i \nu \nu \nu \nu}.
\]
This implies the bound $\nabla P^\Sigma = O(1+|A|)$. Differentiating once again, using Codazzi equation, we have
\begin{align*}
\nabla_q (D_p P_{ij \nu \nu \nu})=&D^2_{q,p} P_{ij \nu \nu \nu} - h_{qp} D_\nu P_{ij \nu\nu\nu} -h_{qi} D_p P_{\nu j \nu\nu\nu} -h_{qj} D_p P_{i \nu\nu\nu\nu} \\
& +h_{qk} (D_p P_{ijk \nu \nu} + D_p P_{ij\nu k\nu} +D_p P_{ij \nu \nu k}),
\end{align*}
\begin{align*}
\nabla_q(h_{pk} P_{ijk \nu \nu}) =& (\nabla_k h_{pq}) P_{ijk \nu \nu} +h_{pk} h_{q \ell} (P_{ijk \ell \nu} + P_{ijk \nu \ell}) \\
& +h_{pk} (D_q P_{ijk \nu\nu} -h_{qi} P_{\nu j k \nu\nu} -h_{qj} P_{i \nu k \nu\nu} -h_{qk} P_{ij \nu \nu \nu}),
\end{align*}
\begin{align*}
\nabla_q (h_{pi} P_{\nu j \nu \nu \nu}) =& (\nabla_i h_{pq}) P_{\nu j \nu\nu\nu} + h_{pi}(D_q P_{\nu j \nu \nu \nu} -h_{qj} P_{\nu \nu\nu\nu\nu}) \\
& +h_{pi} h_{qk}(P_{kj\nu\nu\nu} +P_{\nu jk\nu\nu} + P_{\nu j \nu k \nu} + P_{\nu j \nu \nu k}).
\end{align*}
This implies the bound $\nabla^2 P^\Sigma= O(1+|A|^2+|\nabla A|)$. Adding up the terms and summing over $p,q$, we have
\begin{align*}
\Delta P^\Sigma_{ij} =& -(\nabla_i H) P_{\nu j \nu\nu\nu} -(\nabla_j H) P_{i \nu \nu\nu\nu} +(\nabla_k H) (P_{ijk \nu\nu}+P_{ij \nu k \nu}+P_{ij \nu \nu k})\\
& -HD_\nu P_{ij \nu \nu \nu} - 3|A|^2 P^\Sigma_{ij} -h_{pi} h_{pk} P^\Sigma_{kj}  - h_{pj} h_{pk} P^\Sigma_{ik} \\
& -2 h_{pi} h_{pk} (P_{\nu jk\nu\nu} +P_{\nu j \nu k \nu}+P_{\nu j \nu \nu k}) \\
& -2 h_{pj} h_{pk} (P_{i\nu k \nu \nu} + P_{i \nu \nu k \nu} +P_{i \nu \nu \nu k}) \\
& +2 h_{pi} h_{pj} P_{\nu \nu \nu \nu \nu} +2 h_{p \ell}h_{pk} (P_{ijk\ell \nu} + P_{ijk\nu \ell} + P_{ij \nu k \ell}) \\
& + D^2_{p,p}  P_{ij\nu\nu\nu} - 2h_{pi} D_p P_{\nu j \nu \nu \nu} -2 h_{pj} D_p P_{i \nu \nu \nu \nu} \\
& +2 h_{pk} (D_p P_{ijk \nu \nu} +D_p P_{ij \nu k \nu} + D_p P_{ij \nu \nu k}).
\end{align*}
On the other hand, computing the time derivative gives
\begin{align*}
\partial_t P^\Sigma_{ij} =& -(\nabla_i H) P_{\nu j \nu\nu\nu} -(\nabla_j H) P_{i \nu \nu\nu\nu} +(\nabla_k H) (P_{ijk \nu\nu}+P_{ij \nu k \nu}+P_{ij \nu \nu k})\\
& -HD_\nu P_{ij \nu \nu \nu}-Hh_{ip} P^\Sigma_{pj} -H h_{jp} P^\Sigma_{ip}.
\end{align*}
Combining the last two equations yield the desired formula.
\end{proof}

\begin{remark}
Examining the proof carefully we have in fact the following
\[ |P^\Sigma | \leq 4 \|\mathring{A}_S\|_{C^0(S)}, \qquad |\nabla P^\Sigma | \leq C(K)\|\mathring{A}_S\|_{C^1(S)}  (1+|A|) ,\]
\[ |\nabla^2 P^\Sigma| \leq C(K,L_1) \|\mathring{A}_S\|_{C^2(S)} ( 1+|A|^2 +|\nabla A|),\]
\[ |(\partial_t -\Delta) P^\Sigma| \leq C(K,L_1) \| \mathring{A}_S\|_{C^2(S)} (1+|A|^2). \]
\end{remark}

\subsection{Perturbed second fundamental form}
\label{SS:perturbed-A}

We now use the perturbation tensor defined in the previous subsection to construct the new perturbed second fundamental form with desirable properties.

\begin{definition}
\label{D:perturbed-A}
Given a free boundary surface $\Sigma$ with unit normal $\nu$, the \emph{perturbed second fundamental form $\tilde{A}=(\tilde{h}_{ij})$ of $\Sigma$} is a symmetric $(0,2)$-tensor on $\Sigma$ defined by
\[ \tilde{A}(X,Y):= A(X,Y) + P^\Sigma (X,Y) \qquad \textrm{for all $X,Y \in T\Sigma$}.\]  
Moreover, we define the perturbed mean curvature to be $\tilde{H}:=\Tr \tilde{A}$.
\end{definition}

\begin{lemma}
\label{L:A-tilde-1}
Along $\partial \Sigma$, we have
\[ \tilde{h}_{11}=h_{11}, \; \tilde{h}_{22}=h_{22} \quad \text{ and } \quad \tilde{h}_{12}=0.\]
Hence, $|\tilde{A}| \leq |A|$ and $\tilde{H}=H$ along $\partial \Sigma$.
\end{lemma}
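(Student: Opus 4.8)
The plan is to compute $\tilde A$ directly along $\partial\Sigma$ from its definition, $\tilde h_{ij} = h_{ij} + P^\Sigma_{ij}$, by invoking the already-established values of $P^\Sigma_{ij}$ on the boundary. First I would recall Lemma \ref{L:P-def}, which states that along $\partial\Sigma$ one has $P^\Sigma_{11} = P^\Sigma_{22} = 0$ and $P^\Sigma_{12} = -h_{12}$ (this in turn came from the explicit boundary formula \eqref{E:P-Sigma} for the perturbation tensor together with the Weingarten-type identity $A(N,X) = -A^S(\nu, F_*X)$ of Lemma \ref{L:A-A^S}). Plugging these into Definition \ref{D:perturbed-A} gives immediately $\tilde h_{11} = h_{11} + 0 = h_{11}$, $\tilde h_{22} = h_{22} + 0 = h_{22}$, and $\tilde h_{12} = h_{12} + (-h_{12}) = 0$. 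In our Fermi coordinates $\partial_1 \equiv N$, $\partial_2$ tangent to $\partial\Sigma$, these are exactly the three claimed identities.

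For the consequences, once $\tilde A$ is diagonal along $\partial\Sigma$ with the same diagonal entries as $A$, the trace is $\tilde H = \tilde h_{11} + \tilde h_{22} = h_{11} + h_{22} = H$ (using that in Fermi coordinates the metric is orthonormal at $p$, so the trace is just the sum of the $11$ and $22$ components). For the norm, $|\tilde A|^2 = \tilde h_{11}^2 + 2\tilde h_{12}^2 + \tilde h_{22}^2 = h_{11}^2 + h_{22}^2 \leq h_{11}^2 + 2h_{12}^2 + h_{22}^2 = |A|^2$, so $|\tilde A| \leq |A|$ along $\partial\Sigma$, with equality precisely when $h_{12} = 0$ there.

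There is essentially no obstacle here: the statement is a bookkeeping corollary of the boundary values of $P^\Sigma$ recorded in Lemma \ref{L:P-def}. The only point requiring a word of care is that the identities are being asserted in the specific Fermi frame fixed by our Convention, in which $\partial_1$ is the outward conormal $N$ and $\partial_2$ spans $T\partial\Sigma$; the vanishing of the off-diagonal perturbation term $P^\Sigma_{12} = -h_{12}$ is frame-dependent in the sense that it relies on $\partial_1 = \nu_S$ and $\partial_2,\nu \in TS$ along $\partial\Sigma$, which is exactly the setup of Lemma \ref{L:P-def}. Since all of this is already in place, the proof is just the substitution above followed by the two elementary inequalities for the trace and the norm.

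\begin{proof}
Work in the Fermi coordinates of our Convention along $\partial\Sigma$, so that $\partial_1 \equiv N$, $\partial_2$ is a unit vector tangent to $\partial\Sigma$, and the frame is orthonormal at the point in question. By Lemma \ref{L:P-def}, along $\partial\Sigma$ we have $P^\Sigma_{11} = P^\Sigma_{22} = 0$ and $P^\Sigma_{12} = -h_{12}$. Hence, by Definition \ref{D:perturbed-A},
\[ \tilde h_{11} = h_{11} + P^\Sigma_{11} = h_{11}, \quad \tilde h_{22} = h_{22} + P^\Sigma_{22} = h_{22}, \quad \tilde h_{12} = h_{12} + P^\Sigma_{12} = 0. \]
Consequently $\tilde H = \Tr \tilde A = \tilde h_{11} + \tilde h_{22} = h_{11} + h_{22} = H$ along $\partial\Sigma$, and
\[ |\tilde A|^2 = \tilde h_{11}^2 + 2 \tilde h_{12}^2 + \tilde h_{22}^2 = h_{11}^2 + h_{22}^2 \leq h_{11}^2 + 2 h_{12}^2 + h_{22}^2 = |A|^2, \]
so that $|\tilde A| \leq |A|$ along $\partial\Sigma$.
\end{proof}
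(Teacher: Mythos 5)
Your proof is correct and follows exactly the paper's route: the paper also deduces the lemma directly from Lemma \ref{L:P-def} by substituting the boundary values of $P^\Sigma$ into Definition \ref{D:perturbed-A}. The added verification of the trace and norm consequences in the orthonormal Fermi frame is accurate and merely spells out what the paper leaves implicit.
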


\begin{proof}
The statements follow directly from Lemma \ref{L:P-def}.
\end{proof}

Note that $\tilde{A}=A$ globally in $\Sigma$ when $S$ is totally umbilic as $P \equiv 0$. However, this is in general not true when $S$ is non-umbilic. 

We now compute the boundary normal derivatives for the perturbed second fundamental form.

\begin{lemma}
\label{L:N-htilde}
Along $\partial \Sigma$, we have
\begin{equation}
\label{E:N-h11-tilde}
\nabla_1 \tilde{h}_{11}=2h^S_{22}H +(h^S_{\nu \nu} -3 h^S_{22}) h_{11}+ \nabla_\nu^S h^S_{22}
\end{equation}
\begin{equation}
\label{E:N-h22-tilde}
\nabla_1 \tilde{h}_{22}= h^S_{22} H + (h^S_{\nu \nu} -3 h^S_{22}) h_{22} - \nabla_\nu^S h^S_{22} .
\end{equation}
Hence, we have $N\tilde{H} =h^S_{\nu\nu} \tilde{H}$.
\end{lemma}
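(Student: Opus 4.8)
The plan is to derive the boundary normal derivatives of $\tilde{h}_{11}$ and $\tilde{h}_{22}$ directly from the corresponding formulas for $h_{11}$ and $h_{22}$ in Lemma \ref{L:A-N}, correcting them by the normal derivatives of the perturbation tensor $P^\Sigma$. Concretely, since $\tilde{h}_{ij} = h_{ij} + P^\Sigma_{ij}$, we have $\nabla_1 \tilde{h}_{11} = \nabla_1 h_{11} + \nabla_1 P^\Sigma_{11}$ and $\nabla_1 \tilde{h}_{22} = \nabla_1 h_{22} + \nabla_1 P^\Sigma_{22}$ along $\partial\Sigma$. The key point is that Lemma \ref{L:P-def-1} tells us precisely that $\nabla_1 P^\Sigma_{11} = \nabla_1 P^\Sigma_{22} = 0$ on $\partial\Sigma$, so the perturbation contributes nothing to these particular normal derivatives and \eqref{E:N-h11-tilde}, \eqref{E:N-h22-tilde} follow immediately from \eqref{E:N-h11}, \eqref{E:N-h22}.

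For the final assertion $N\tilde{H} = h^S_{\nu\nu}\tilde{H}$, I would simply add \eqref{E:N-h11-tilde} and \eqref{E:N-h22-tilde}. Using the Fermi coordinate convention so that $\tilde{H} = \tilde{h}_{11} + \tilde{h}_{22}$ and $N = \partial_1$ along $\partial\Sigma$ (recalling that $\partial_1$ is a geodesic vector field, so $\nabla_1$ acting on these traced components matches $N$ applied to $\tilde{H}$), the sum gives
\[
N\tilde{H} = \nabla_1\tilde{h}_{11} + \nabla_1\tilde{h}_{22} = 3h^S_{22}H + (h^S_{\nu\nu} - 3h^S_{22})(h_{11}+h_{22}) = 3h^S_{22}H + (h^S_{\nu\nu}-3h^S_{22})H,
\]
where the two $\pm\nabla^S_\nu h^S_{22}$ terms cancel and $h_{11}+h_{22}=H$. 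This simplifies to $h^S_{\nu\nu}H$, and since $\tilde{H}=H$ along $\partial\Sigma$ by Lemma \ref{L:A-tilde-1}, we conclude $N\tilde{H} = h^S_{\nu\nu}\tilde{H}$.

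I do not anticipate a genuine obstacle here: the lemma is essentially a bookkeeping corollary of the already-established Lemmas \ref{L:A-N}, \ref{L:P-def-1}, and \ref{L:A-tilde-1}. The one point requiring a little care is making sure the Fermi-coordinate identification of $\nabla_1$ on the traced quantities with the ambient normal derivative $N$ is applied consistently — in particular that $N\tilde H$ is correctly read off as $\nabla_1(\tilde h_{11}+\tilde h_{22})$ rather than picking up extra Christoffel terms — but this is exactly the convention already used to pass from Lemma \ref{L:A-N} to formulas like \eqref{E:N-A2}, so no new input is needed. Indeed this is essentially identical in spirit to the observation that $N(H) = h^S_{\nu\nu}H$ in Lemma \ref{L:N-H}, now transported to the perturbed quantity, which is precisely the property that makes $\tilde H$ a useful replacement for $H$ when the barrier is non-umbilic.
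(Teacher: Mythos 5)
Your proposal is correct and follows exactly the paper's route: the paper's proof is precisely the one-line deduction from Lemmas \ref{L:A-N}, \ref{L:P-def-1} and \ref{L:A-tilde-1} that you spell out. Your additional check that $N\tilde{H}$ equals the covariant trace $\nabla_1\tilde{h}_{11}+\nabla_1\tilde{h}_{22}$ (with no stray Christoffel terms, since $\tilde H$ is a scalar and $\nabla g=0$) is sound and consistent with how \eqref{E:N-A2} is derived from Lemma \ref{L:A-N}.
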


\begin{proof}
It follows directly from Lemma \ref{L:A-N}, \ref{L:P-def-1} and \ref{L:A-tilde-1}. 
\end{proof}

\begin{lemma}
Along $\partial \Sigma$, we have
\begin{equation}
\label{E:N-|A|-tilde}
\frac{1}{2}N |\tilde{A}|^2 = 3h^S_{22}Hh_{11}+(h^S_{\nu\nu}-2h^S_{22})|\tilde{A}|^2-2h^S_{22} h^2_{11} + (\nabla^S_\nu h^S_{22}) (h_{11}-h_{22})
\end{equation}
In particular, we have the following inequality at any $|\tilde{A}|>0$,
\begin{equation}
\label{E:N-|A-tilde|-ineq}
N|\tilde{A}| \leq 3 h^S_{22} \frac{H}{|\tilde{A}|} h_{11} + (h^S_{\nu\nu}-2h^S_{22})|\tilde{A}| + \sqrt{2} |\nabla^S_\nu h^S_{22}|
\end{equation}
\end{lemma}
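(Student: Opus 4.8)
The plan is to compute $N|\tilde A|^2 = \nabla_1(\tilde h_{ij}\tilde h^{ij})$ directly from the boundary formulas already established. Since along $\partial\Sigma$ we have $\tilde h_{12}=0$ (Lemma \ref{L:A-tilde-1}) and the metric is the standard one in Fermi coordinates, $|\tilde A|^2 = \tilde h_{11}^2 + \tilde h_{22}^2$ on $\partial\Sigma$, but I must be careful: $\nabla_1|\tilde A|^2 = 2\tilde h^{ij}\nabla_1\tilde h_{ij}$, and the off-diagonal term $2\tilde h^{12}\nabla_1\tilde h_{12} = 0$ only because $\tilde h_{12}=0$ there, not because $\nabla_1\tilde h_{12}=0$. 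So the first step is to record
\[
\tfrac12 N|\tilde A|^2 = \tilde h_{11}\nabla_1\tilde h_{11} + \tilde h_{22}\nabla_1\tilde h_{22} = h_{11}\nabla_1\tilde h_{11} + h_{22}\nabla_1\tilde h_{22},
\]
using $\tilde h_{11}=h_{11}$, $\tilde h_{22}=h_{22}$ on $\partial\Sigma$.

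Next I would substitute the expressions for $\nabla_1\tilde h_{11}$ and $\nabla_1\tilde h_{22}$ from Lemma \ref{L:N-htilde}, namely $\nabla_1\tilde h_{11} = 2h^S_{22}H + (h^S_{\nu\nu}-3h^S_{22})h_{11} + \nabla^S_\nu h^S_{22}$ and $\nabla_1\tilde h_{22} = h^S_{22}H + (h^S_{\nu\nu}-3h^S_{22})h_{22} - \nabla^S_\nu h^S_{22}$. Multiplying out and collecting terms: the $(h^S_{\nu\nu}-3h^S_{22})$ coefficient multiplies $h_{11}^2 + h_{22}^2 = |\tilde A|^2$; the $H$-terms give $h^S_{22}H(2h_{11}+h_{22}) = h^S_{22}H(h_{11} + H) = h^S_{22}Hh_{11} + h^S_{22}H^2$; and the gradient terms give $\nabla^S_\nu h^S_{22}(h_{11}-h_{22})$. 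Then one rewrites $h^S_{22}H^2$ using $H = h_{11}+h_{22}$, so $h^S_{22}H^2 = h^S_{22}H h_{11} + h^S_{22}Hh_{22}$, and combines with the $|\tilde A|^2$ piece: note $h^S_{22}Hh_{22} = h^S_{22}(h_{11}+h_{22})h_{22} = h^S_{22}h_{11}h_{22} + h^S_{22}h_{22}^2$, and one wants to land on $3h^S_{22}Hh_{11} - 2h^S_{22}h_{11}^2$ plus a multiple of $|\tilde A|^2$. A short bookkeeping check shows $h^S_{22}Hh_{11} + h^S_{22}H^2 + (-3h^S_{22})|\tilde A|^2 = 3h^S_{22}Hh_{11} - 2h^S_{22}h_{11}^2$ after using $H^2 = |\tilde A|^2 + 2h_{11}h_{22}$ on $\partial\Sigma$ (since $h_{12}=0$ there, $|\tilde A|^2 = h_{11}^2+h_{22}^2$, hence $H^2 - |\tilde A|^2 = 2h_{11}h_{22}$). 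This yields exactly \eqref{E:N-|A|-tilde}.

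For the inequality \eqref{E:N-|A-tilde|-ineq}, I divide \eqref{E:N-|A|-tilde} by $|\tilde A|$ wherever $|\tilde A|>0$: the term $3h^S_{22}Hh_{11}$ becomes $3h^S_{22}\frac{H}{|\tilde A|}h_{11}$, the term $(h^S_{\nu\nu}-2h^S_{22})|\tilde A|^2$ becomes $(h^S_{\nu\nu}-2h^S_{22})|\tilde A|$, the $-2h^S_{22}h_{11}^2$ term is dropped since $h^S_{22}\ge 0$ (barrier convexity) and thus $-2h^S_{22}h_{11}^2/|\tilde A|\le 0$, and finally $(\nabla^S_\nu h^S_{22})(h_{11}-h_{22})/|\tilde A| \le |\nabla^S_\nu h^S_{22}| \cdot |h_{11}-h_{22}|/|\tilde A| \le \sqrt2\,|\nabla^S_\nu h^S_{22}|$, the last step because $(h_{11}-h_{22})^2 \le 2(h_{11}^2+h_{22}^2) = 2|\tilde A|^2$. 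I anticipate the only mildly delicate point is the algebraic identity $H^2 = |\tilde A|^2 + 2h_{11}h_{22}$ along $\partial\Sigma$ and keeping the diagonalization straight — i.e. remembering that $\nabla_1$ does not commute with restriction to the diagonal, so one genuinely needs $\tilde h_{12}=0$ (not $\nabla_1\tilde h_{12}=0$) to kill the cross term in $N|\tilde A|^2$. Everything else is routine substitution and the two elementary inequalities $|h_{11}-h_{22}|\le\sqrt2|\tilde A|$ and $h^S_{22}\ge 0$.
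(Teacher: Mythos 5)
Your proposal is correct and follows the same route as the paper: substitute the boundary normal derivatives from Lemma \ref{L:N-htilde} into $\tfrac12 N|\tilde A|^2=\tilde h^{ij}\nabla_1\tilde h_{ij}$, use $\tilde h_{12}=0$ (Lemma \ref{L:A-tilde-1}) to kill the cross term $2\tilde h^{12}\nabla_1\tilde h_{12}$, and then obtain the inequality by dropping $-2h^S_{22}h_{11}^2/|\tilde A|\le 0$ (convexity of $S$) and applying $|h_{11}-h_{22}|\le\sqrt2\,|\tilde A|$. The algebraic identity you verify, $Hh_{11}+H^2-|\tilde A|^2=3Hh_{11}-2h_{11}^2$ on $\partial\Sigma$, is exactly the bookkeeping the paper leaves implicit.
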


\begin{proof}
It follows by a straightforward calculation from Lemma \ref{L:N-htilde}. Note that we do not have a term (c.f. (\ref{E:N-A2})) involving $\nabla_1 \tilde{h}_{12}$ since $\tilde{h}_{12}=0$ along $\partial \Sigma$. Moreover, $|\tilde{A}|^2=h_{11}^2+h_{22}^2$ along $\partial \Sigma$ by Lemma \ref{L:A-tilde-1} and $h^S_{22} \geq 0$ by convexity of $S$.
\end{proof}

We now derive some bounds involving the evolution equation for the perturbed second fundamental form.

\begin{proposition}
\label{P:Atilde-evolution}
Let $\{\Sigma_t\}_{t\in[0,T)}$ be a solution to the free-boundary mean curvature flow. Then, we have the following bounds on the evolution equation:
\begin{equation}
\label{E:htilde-evolve}
(\partial_t -\Delta) \tilde{h}_{ij} = |A|^2 \tilde{h}_{ij} - 2H h_{im} \tilde{h}^m_{\phantom{m}j} +O(1+|A|^2),
\end{equation}
\begin{equation}
\label{E:Htilde-evolve}
(\partial_t -\Delta) \tilde{H} = |A|^2 \tilde{H} + O(1+|A|^2),
\end{equation}
\begin{equation}
\label{E:|Atilde|^2-evolve}
(\partial_t - \Delta) |\tilde{A}|^2 = 2|A|^2 |\tilde{A}|^2 - 2|\nabla \tilde{A}|^2 +O(1+|A|^2),
\end{equation}
\begin{align}
\label{E:pinch-evolve}
(\partial_t - \Delta) \left( |\tilde{A}|^2 - \frac{1}{2} \tilde{H}^2 \right) =& 2|A|^2 \left( |\tilde{A}|^2 - \frac{1}{2}\tilde{H}^2 \right) \\
& -2 \left( |\nabla \tilde{A}|^2 -\frac{1}{2} |\nabla \tilde{H}|^2 \right) + O(1+|A|^3). \nonumber
\end{align}
\end{proposition}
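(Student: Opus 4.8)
The plan is to obtain each of the four displayed equations by combining the known evolution equations for the unperturbed quantities (Lemma \ref{L:MCF}) with the evolution equation and bounds for the perturbation tensor $P^\Sigma$ from Proposition \ref{P:P-estimate}. Since $\tilde h_{ij} = h_{ij} + P^\Sigma_{ij}$ by Definition \ref{D:perturbed-A}, and the heat operator $\partial_t - \Delta$ is linear, we simply add the two contributions and then absorb all $P^\Sigma$-dependent terms into the error notation $O(\cdot)$ using the bounds \eqref{E:P-bounds} and \eqref{E:Delta-P-bounds}. The guiding principle throughout is that every occurrence of $P^\Sigma$, $\nabla P^\Sigma$, $\nabla^2 P^\Sigma$, or $(\partial_t-\Delta)P^\Sigma$ contributes at most $O(1+|A|^2)$ (or $O(1+|A|^3)$ when multiplied by an extra factor of $|A|$), while we must keep the genuinely quadratic-in-$A$ structural terms (those that persist for umbilic barriers) exact.

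First I would handle \eqref{E:htilde-evolve}. From Lemma \ref{L:MCF}(iv), $(\partial_t-\Delta)h_{ij} = -2Hh_{im}h^m_{\ j} + |A|^2 h_{ij}$, and from Proposition \ref{P:P-estimate}, $(\partial_t-\Delta)P^\Sigma_{ij} = O(1+|A|^2)$. Adding and rewriting $h_{ij} = \tilde h_{ij} - P^\Sigma_{ij}$ and $h^m_{\ j} = \tilde h^m_{\ j} - (P^\Sigma)^m_{\ j}$ in the leading terms introduces only corrections of the form $|A|^2 P^\Sigma = O(1+|A|^2)$ and $H h_{im}(P^\Sigma)^m_{\ j} = O(|A|)\cdot O(1+|A|) = O(1+|A|^2)$, so the whole remainder stays $O(1+|A|^2)$; this yields \eqref{E:htilde-evolve}. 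Tracing \eqref{E:htilde-evolve} (using $\partial_t g^{ij} = 2Hh^{ij}$ from Lemma \ref{L:MCF}(i), which produces the extra term that converts $-2Hh_{im}\tilde h^m_{\ j}$ and the trace correction into $|A|^2 \tilde H$ up to $O(1+|A|^2)$) gives \eqref{E:Htilde-evolve}. For \eqref{E:|Atilde|^2-evolve}, I would compute $(\partial_t-\Delta)|\tilde A|^2 = (\partial_t-\Delta)(g^{ik}g^{j\ell}\tilde h_{ij}\tilde h_{k\ell})$ directly: the $-\Delta$ of a product generates the $-2|\nabla\tilde A|^2$ term plus cross terms $2\tilde h^{ij}(\partial_t-\Delta)\tilde h_{ij}$ plus metric-derivative terms; substituting \eqref{E:htilde-evolve} and using $|\tilde A|^2 = |A|^2 + O(1+|A|)$ to replace $\tilde h^{ij}\cdot|A|^2\tilde h_{ij}$ by $2|A|^2|\tilde A|^2$ modulo $O(1+|A|^2)$, and noting that the $-2Hh_{im}\tilde h^m_{\ j}\tilde h^{ij}$ term combines with the metric evolution to cancel (exactly as in Lemma \ref{L:MCF}(vi) for the unperturbed case) up to error, gives the result. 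Finally, \eqref{E:pinch-evolve} follows by forming $|\tilde A|^2 - \tfrac12\tilde H^2$ from \eqref{E:|Atilde|^2-evolve} and \eqref{E:Htilde-evolve}: here $(\partial_t-\Delta)(\tfrac12\tilde H^2) = \tilde H(\partial_t-\Delta)\tilde H - |\nabla\tilde H|^2 = |A|^2\tilde H^2 - |\nabla\tilde H|^2 + \tilde H\cdot O(1+|A|^2)$, and since $|\tilde H| \leq C(1+|A|)$ the term $\tilde H\cdot O(1+|A|^2)$ is $O(1+|A|^3)$, which is why the error in \eqref{E:pinch-evolve} is one power worse.

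The main obstacle, and the step requiring the most care, is bookkeeping the metric-derivative and trace-correction terms in the computations of $(\partial_t-\Delta)|\tilde A|^2$ and the trace $\tilde H$, making sure that the structural quadratic terms assemble exactly as in the unperturbed Lemma \ref{L:MCF}(vi),(vii) and that nothing escapes the $O(\cdot)$ error that is secretly larger than claimed — in particular verifying that no term of size $|A|^2\cdot|A| = |A|^3$ sneaks into \eqref{E:|Atilde|^2-evolve} (it must remain $O(1+|A|^2)$ there, while \eqref{E:pinch-evolve} is allowed the weaker $O(1+|A|^3)$). One should double-check that the cross term $2\tilde h^{ij}(-2Hh_{im}\tilde h^m_{\ j})$ together with the two metric-evolution contributions $2\cdot(2Hh^{ik})g^{j\ell}\tilde h_{ij}\tilde h_{k\ell}$ genuinely cancels up to $O(1+|A|^2)$; this is the free-boundary analogue of the standard cancellation and is where writing $\tilde h = h + P^\Sigma$ and peeling off $P^\Sigma$-terms one at a time keeps the error accounting honest. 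No boundary terms enter at this stage since these are purely interior evolution identities; the boundary behaviour of $\tilde A$ was already recorded in Lemmas \ref{L:A-tilde-1}--\ref{L:N-htilde} and will be used only later.
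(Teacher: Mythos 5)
Your proposal is correct and follows the paper's proof essentially verbatim: \eqref{E:htilde-evolve} by adding Lemma \ref{L:MCF}(iv) to Proposition \ref{P:P-estimate} and swapping $h$ for $\tilde h$ at cost $O(1+|A|^2)$, \eqref{E:Htilde-evolve} by tracing with $\partial_t g^{ij}=2Hh^{ij}$, \eqref{E:|Atilde|^2-evolve} by the product computation in which $2Hh^{ik}g^{j\ell}\tilde h_{ij}\tilde h_{k\ell}$ cancels $-2Hh_{im}\tilde h^m_{\phantom{m}j}\tilde h^{ij}$ exactly (not merely up to error), and \eqref{E:pinch-evolve} by combination, the extra factor $\tilde H=O(1+|A|)$ accounting for the weaker $O(1+|A|^3)$. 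The one item you flag for double-checking is a genuine subtlety that the paper's own one-line computation also glosses over: contracting the $O(1+|A|^2)$ error of \eqref{E:htilde-evolve} against $\tilde h^{ij}=O(1+|A|)$ only yields $O(1+|A|^3)$ in \eqref{E:|Atilde|^2-evolve}, not the stated $O(1+|A|^2)$; this discrepancy is harmless downstream, since (as the paper itself remarks after Corollary \ref{C:|A-tilde|-evolve}) the weaker bound suffices for all subsequent applications.
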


\begin{proof}
(\ref{E:htilde-evolve}) follows directly from Lemma \ref{L:MCF} (iv), Proposition \ref{P:P-estimate} and (\ref{E:P-C^2}). (\ref{E:Htilde-evolve}) then follows from (\ref{E:htilde-evolve}) together with Lemma \ref{L:MCF} (i) and (\ref{E:P-C^2}). For (\ref{E:|Atilde|^2-evolve}), we compute using Lemma \ref{L:MCF} (i) and (\ref{E:htilde-evolve})
\begin{align*}
\frac{1}{2} (\partial_t -\Delta) |\tilde{A}|^2 =& \frac{1}{2} \partial_t(g^{ik} g^{j\ell} \tilde{h}_{ij} \tilde{h}_{k \ell}) - \langle \Delta \tilde{A},\tilde{A} \rangle -|\nabla \tilde{A}|^2 \\
=& 2 H h^{ik} g^{j\ell} \tilde{h}_{ij} \tilde{h}_{k \ell} + g^{ik} g^{j \ell}  \tilde{h}_{k\ell}(\partial_t -\Delta) \tilde{h}_{ij} -|\nabla \tilde{A}|^2 \\
=& |A|^2 |\tilde{A}|^2 - |\nabla \tilde{A}|^2 +O(1+|A|^2).
\end{align*}
Finally, (\ref{E:pinch-evolve}) follows immediately from (\ref{E:Htilde-evolve}) and (\ref{E:|Atilde|^2-evolve}).
\end{proof}

Note that the error term in (\ref{E:|Atilde|^2-evolve}) is of order $|A|^2$ instead of $|A|^3$ (c.f. \cite[Theorem 5.3]{Edelen16}). On the other hand, we only get the error bound in the order of $|A|^3$ in (\ref{E:pinch-evolve}), which is enough for our purpose later. From (\ref{E:P-bounds}), we have the following bounds:
\begin{equation}
\label{E:A-Atilde-bound}
\tilde{A} = A+ O(1), \quad |\tilde{A}| = |A|+O(1),
\end{equation}
\begin{equation}
\label{E:nablaA-Atilde-bound}
\nabla \tilde{A} = \nabla A+ O(1+|A|), \quad |\nabla \tilde{A}| = |\nabla A|+O(1+|A|).
\end{equation}

\begin{corollary}
\label{C:|A-tilde|-evolve}
Whenever $|\tilde{A}| \geq 1$, we have 
\[ (\partial_t -\Delta) |\tilde{A}|  \leq |A|^2 |\tilde{A}| + O(|\tilde{A}|)\]
\end{corollary}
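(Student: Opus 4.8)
The plan is to derive the stated inequality for $|\tilde A|$ from the already-established evolution inequality \eqref{E:|Atilde|^2-evolve} for $|\tilde A|^2$ by the standard device of writing $|\tilde A| = \sqrt{|\tilde A|^2}$ and using that $\partial_t-\Delta$ is a second-order operator. Concretely, wherever $|\tilde A|>0$ the function $u:=|\tilde A|$ is smooth, and one has the pointwise identity
\begin{equation*}
(\partial_t-\Delta)|\tilde A| = \frac{1}{2|\tilde A|}(\partial_t-\Delta)|\tilde A|^2 + \frac{1}{4|\tilde A|^3}\bigl|\nabla |\tilde A|^2\bigr|^2 - \frac{1}{|\tilde A|}|\nabla|\tilde A||^2 \cdot |\tilde A| \;?
\end{equation*}
— more cleanly, using $\nabla|\tilde A|^2 = 2|\tilde A|\,\nabla|\tilde A|$ one gets
\begin{equation*}
(\partial_t-\Delta)|\tilde A| = \frac{1}{2|\tilde A|}\Bigl((\partial_t-\Delta)|\tilde A|^2 + 2|\nabla|\tilde A||^2\Bigr) - \frac{1}{|\tilde A|}\cdot 2|\tilde A|^2 |\nabla|\tilde A||^2 \Big/ (2|\tilde A|)\;,
\end{equation*}
so that after simplification the net contribution of the gradient-of-the-norm terms is $\tfrac{1}{|\tilde A|}\bigl(|\nabla|\tilde A||^2 - |\nabla\tilde A|^2\bigr)\le 0$ by Kato's inequality $|\nabla|\tilde A|| \le |\nabla\tilde A|$. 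This is the key estimate and it is the usual mechanism (exactly as for $|A|$ in Huisken's work); I expect this to be the main point one must be slightly careful about.

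Carrying this out: first I would record that at any point with $|\tilde A|>0$,
\begin{equation*}
(\partial_t-\Delta)|\tilde A| \le \frac{1}{2|\tilde A|}(\partial_t-\Delta)|\tilde A|^2,
\end{equation*}
using Kato's inequality to discard the favorable gradient terms. Then I substitute \eqref{E:|Atilde|^2-evolve}, which gives
\begin{equation*}
(\partial_t-\Delta)|\tilde A| \le \frac{1}{2|\tilde A|}\Bigl(2|A|^2|\tilde A|^2 - 2|\nabla\tilde A|^2 + O(1+|A|^2)\Bigr) \le |A|^2|\tilde A| + \frac{O(1+|A|^2)}{2|\tilde A|},
\end{equation*}
after again dropping $-|\nabla\tilde A|^2/|\tilde A| \le 0$. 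It remains to absorb the error term: since we are in the regime $|\tilde A|\ge 1$, we have $\tfrac{1}{2|\tilde A|}\le \tfrac12$, and by \eqref{E:A-Atilde-bound} we have $|A| = |\tilde A| + O(1)$, hence $|A|^2 = O(|\tilde A|^2)$ and $O(1+|A|^2) = O(|\tilde A|^2) = O(|\tilde A|)\cdot O(|\tilde A|)$; more simply, $O(1+|A|^2)/(2|\tilde A|) \le O(1+|\tilde A|^2)/(2|\tilde A|) = O(|\tilde A|)$ when $|\tilde A|\ge 1$. This yields $(\partial_t-\Delta)|\tilde A| \le |A|^2|\tilde A| + O(|\tilde A|)$, which is the claim.

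The only genuine subtlety, beyond the routine algebra, is the Kato-inequality step and making sure the sign of the residual gradient term $|\nabla|\tilde A||^2 - |\nabla\tilde A|^2$ is indeed $\le 0$; everything else is bookkeeping with the $O(\cdot)$ notation and the hypothesis $|\tilde A|\ge 1$, which is precisely what lets us convert the additive constant in $O(1+|A|^2)$ into a multiple of $|\tilde A|$. (One should also note the computation is valid pointwise wherever $|\tilde A|>0$, and since the asserted inequality is a differential inequality to be used via the maximum principle only on the set $\{|\tilde A|\ge 1\}\subset\{|\tilde A|>0\}$, there is no issue at the zero set of $\tilde A$.) I would present the argument in exactly this order: Kato reduction, substitution of the $|\tilde A|^2$-evolution, discard of the good term, absorption of the error using $|\tilde A|\ge 1$ and \eqref{E:A-Atilde-bound}.
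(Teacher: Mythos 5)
Your proposal is correct and follows essentially the same route as the paper: the identity $(\partial_t-\Delta)|\tilde A| = \tfrac{1}{2|\tilde A|}(\partial_t-\Delta)|\tilde A|^2 + \tfrac{|\nabla|\tilde A||^2}{|\tilde A|}$, Kato's inequality to kill the combined gradient term $\tfrac{1}{|\tilde A|}(|\nabla|\tilde A||^2-|\nabla\tilde A|^2)\le 0$, and absorption of the $O(1+|A|^2)$ error using $|\tilde A|\ge 1$ and (\ref{E:A-Atilde-bound}). The only quibble is presentational: your intermediate claim $(\partial_t-\Delta)|\tilde A|\le \tfrac{1}{2|\tilde A|}(\partial_t-\Delta)|\tilde A|^2$ is not valid in isolation (the discarded term $|\nabla|\tilde A||^2/|\tilde A|$ is nonnegative) and must be paired, as you correctly do elsewhere, with the $-|\nabla\tilde A|^2/|\tilde A|$ term before invoking Kato.
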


\begin{proof}
Note that
\[ (\partial_t-\Delta) |\tilde{A}| = \frac{1}{2} \frac{(\partial_t-\Delta)|\tilde{A}|^2}{|\tilde{A}|} + \frac{|\nabla|\tilde{A}||^2}{|\tilde{A}|} =|A|^2 |\tilde{A}| + \frac{|\nabla|\tilde{A}||^2- |\nabla \tilde{A}|^2}{|\tilde{A}|}+O(|\tilde{A}|)\]
from which the estimate follows from Kato's inequality that $|\nabla|\tilde{A}|| \leq |\nabla \tilde{A}|$.
\end{proof}

Note that we have the error term bounded by $|\tilde{A}|$ instead of $|\tilde{A}|^2$ as in \cite{Edelen16}. However it is also enough for our purpose to have the weaker bound.

\section{Preservation of convexity}
\label{S:convexity}

In this section we prove that convexity is preserved under free-boundary mean curvature flow, provided that the initial surface $\Sigma_0$ is convex \emph{enough} (depending only on $S$). When $S=\mathbb{S}^2$ or $\mathbb{R}^2$, this was established by Stahl in \cite[Theorem 4.4]{Stahl96b}. Our result generalizes this to \emph{arbitrary} convex barriers.

We first show that the any sufficiently large positive lower bound for the perturbed second fundamental form $\tilde{A}=(\tilde{h}_{ij})$ as defined in Definition \ref{D:perturbed-A} is preserved up to a fixed multiplicative factor. Our proof is based on a maximum principle argument applied to the symmetric $(0,2)$-tensor $\tilde{h}_{ij}$. The advantage of using the perturbed second fundamental form is that $\tilde{h}_{ij}$ decomposes at the boundary $\partial \Sigma$ by Lemma \ref{L:A-tilde-1}. Therefore, for the maximum principle arguments we only have to consider the boundary derivatives $\nabla_1 \tilde{h}_{11}$ and $\nabla_1 \tilde{h}_{22}$ but not the cross term $\nabla_1 \tilde{h}_{12}$, on which we have no control.

\begin{theorem}
\label{T:tilde-convexity-I}
There exists a constant $\tilde{D}_0=\tilde{D}_0(S)>0$ such that whenever $\{\Sigma_t\}_{t\in[0,T)}$ is a solution to the free-boundary mean curvature flow with 
\[ \tilde{h}_{ij} \geq \tilde{D} g_{ij} \quad \text{ at $t=0$}  \]
for some constant $\tilde{D} \geq \tilde{D}_0$, then we have
\[  \tilde{h}_{ij} > \frac{1}{2} \tilde{D} g_{ij} \quad \text{ for all $t\in [0,T)$} .\]
\end{theorem}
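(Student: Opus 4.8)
The plan is to run a tensor maximum principle for the symmetric $(0,2)$-tensor $M_{ij} := \tilde{h}_{ij} - \varepsilon \tilde{D} g_{ij}$ (for a suitable $\varepsilon$ slightly less than $\tfrac{1}{2}$, or one can work with $\tfrac{1}{2}$ directly and absorb errors), showing that its positive-definiteness at $t=0$ is preserved. First I would record the evolution equation for $\tilde{h}_{ij}$ from Proposition \ref{P:Atilde-evolution}, equation (\ref{E:htilde-evolve}), together with $\partial_t g_{ij} = -2Hh_{ij}$ from Lemma \ref{L:MCF}(i), to get
\[ (\partial_t - \Delta) M_{ij} = |A|^2 \tilde{h}_{ij} - 2Hh_{im}\tilde{h}^m_{\phantom{m}j} + 2\varepsilon\tilde{D} H h_{ij} + O(1+|A|^2). \]
The standard Hamilton-type argument (as in \cite{Huisken84}) applies at an interior first zero-eigenvalue point of $M_{ij}$: at such a point, choosing a null eigenvector $w$, one checks that the reaction terms contracted with $w$ are controlled. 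Since in dimension two the null-eigenvector direction forces $\tilde{h}(w,w)=\varepsilon\tilde{D}|w|^2$ while $\tilde{h}$ on the orthogonal direction is still $\geq \varepsilon\tilde{D}$, the terms $|A|^2\tilde{h}(w,w) - 2H h(w,w')\tilde{h}(w',w)$ combine favorably — this is exactly the convexity-preservation mechanism — and one is left needing $|A|^2 \varepsilon\tilde{D} + 2\varepsilon\tilde{D}H h(w,w) \geq C(S)(1+|A|^2)$, i.e. $\varepsilon\tilde{D}$ must dominate the $O(S)$ constant. Since initially $|A| \geq |\tilde{A}| - C(S) \geq \tilde{D} - C(S)$ is large, and the error terms are genuinely only $O(1+|A|^2)$ (not $O(|A|^3)$, crucially, by the remark following Proposition \ref{P:Atilde-evolution}), choosing $\tilde{D}_0 = \tilde{D}_0(S)$ large enough makes the interior reaction term strictly positive, so no interior zero eigenvalue can form.

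The genuinely new ingredient, and the step I expect to be the main obstacle, is the \emph{boundary} analysis. At a boundary point where $M_{ij}$ first develops a zero eigenvalue, one needs a one-sided inequality $\nabla_N M(w,w) \geq 0$ (Hopf-type) to derive a contradiction with the interior strict-positivity, or more precisely one invokes a maximum principle for tensors up to the boundary such as \cite[Lemma 3.4]{Stahl96a}. This is where the perturbation tensor pays off: by Lemma \ref{L:A-tilde-1}, along $\partial\Sigma$ the tensor $\tilde{h}_{ij}$ is diagonal in the Fermi frame $(\partial_1,\partial_2) = (N,\text{tangent})$, so the only relevant null directions at the boundary are $\partial_1$ and $\partial_2$, and by Lemma \ref{L:N-htilde} we have the clean formulas
\[ \nabla_1 \tilde{h}_{11} = 2h^S_{22}H + (h^S_{\nu\nu} - 3h^S_{22})h_{11} + \nabla^S_\nu h^S_{22}, \qquad \nabla_1 \tilde{h}_{22} = h^S_{22}H + (h^S_{\nu\nu} - 3h^S_{22})h_{22} - \nabla^S_\nu h^S_{22}, \]
with \emph{no} uncontrollable $\nabla_1 \tilde{h}_{12}$ term (this is precisely Lemma \ref{L:P-def-1}). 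I would therefore check: if the null eigenvalue sits in the $\partial_2$ direction, then $\tilde{h}_{22} = \varepsilon\tilde{D}$ and $\tilde{h}_{11} \geq \varepsilon\tilde{D}$ at that point, so $H = \tilde{h}_{11} + \tilde{h}_{22} \geq 2\varepsilon\tilde{D}$, and using $0 \leq h^S_{22}, h^S_{\nu\nu} \leq K$ and $|\nabla^S_\nu h^S_{22}| \leq C(S)$ one computes
\[ \nabla_1 \tilde{h}_{22} = h^S_{22}(H - 3\tilde{h}_{22}) + h^S_{\nu\nu}\tilde{h}_{22} - \nabla^S_\nu h^S_{22} = h^S_{22}(\tilde{h}_{11} - 2\tilde{h}_{22}) + h^S_{\nu\nu}\tilde{h}_{22} - \nabla^S_\nu h^S_{22}; \]
the term $h^S_{22}(\tilde{h}_{11} - 2\tilde{h}_{22})$ may be negative, but it is at worst $-2K\varepsilon\tilde{D}\cdot(\text{something})$ — here I would need to be careful, and likely strengthen to $\tilde{h}_{11} - 2\tilde{h}_{22}$ not being too negative, which at a first-contact boundary minimum is automatic since the other eigenvalue is still bounded below; the sign works out once $\varepsilon\tilde{D}_0$ beats $C(S)$, giving $\nabla_N \tilde{h}_{22} \geq -C(S) \geq -\varepsilon\tilde{D}_0 \cdot(\text{small})$, which combined with the strict interior inequality is enough. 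The symmetric case (null direction $\partial_1$) is handled identically using the formula for $\nabla_1 \tilde{h}_{11}$.

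To assemble this rigorously I would set up a barrier: fix $\delta > 0$ small, consider $M^\delta_{ij} := \tilde{h}_{ij} - (\tfrac{1}{2}\tilde{D} - \delta(1+t))g_{ij}$ (or add a $\delta e^{Ct}$ term) so that the relevant inequalities at $t=0$ are strict, let $t_0$ be the first time $M^\delta$ has a zero eigenvalue somewhere, and derive a contradiction at $(p_0, t_0)$ by splitting into the interior and boundary cases above; then send $\delta \to 0$. The dimension-two restriction enters exactly in the boundary computation, where the combination of the two diagonal entries reconstitutes the mean curvature $H$, whose positivity (Corollary \ref{C:H-preserve}) and largeness (from $\tilde{D} \geq \tilde{D}_0$) is what drives the sign. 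Choosing $\tilde{D}_0(S)$ to be, say, $100\,C(S)$ for the appropriate aggregate constant $C(S)$ coming from (\ref{E:P-C^2}), Proposition \ref{P:Atilde-evolution}, and the barrier bounds, completes the argument.
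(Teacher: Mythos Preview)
Your overall strategy is the same as the paper's --- tensor maximum principle, with the perturbed $\tilde A$ diagonalizing at the boundary so that only $\nabla_1\tilde h_{11}$ and $\nabla_1\tilde h_{22}$ need to be examined --- and the interior case is handled correctly. The gap is in the boundary case when the null direction is $\partial_2$.

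There you have $\tilde h_{22}=\tfrac12\tilde D$ and $\tilde h_{11}\geq\tfrac12\tilde D$, and you write
\[
\nabla_1\tilde h_{22}=h^S_{22}(\tilde h_{11}-2\tilde h_{22})+h^S_{\nu\nu}\tilde h_{22}-\nabla^S_\nu h^S_{22}.
\]
Your claim that ``the sign works out once $\varepsilon\tilde D_0$ beats $C(S)$'' is false: with only the first-contact information $\tilde h_{11}\geq\tfrac12\tilde D$, the term $h^S_{22}(\tilde h_{11}-2\tilde h_{22})$ can be as negative as $-\tfrac12 h^S_{22}\tilde D$, and together with $h^S_{\nu\nu}\tilde h_{22}=\tfrac12 h^S_{\nu\nu}\tilde D$ you get $\tfrac12(h^S_{\nu\nu}-h^S_{22})\tilde D$, which is \emph{negative and grows with $\tilde D$} whenever $h^S_{22}>h^S_{\nu\nu}$. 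No choice of $\tilde D_0$ rescues this. What is missing is the quantitative use of Corollary~\ref{C:H-preserve}: from $\tilde h_{ij}\geq\tilde Dg_{ij}$ at $t=0$ and $|\tilde A-A|=O(1)$ one gets $H\geq\tfrac{11}{6}\tilde D$ at $t=0$ (for $\tilde D$ large), and this bound is preserved globally. Then at the contact point $h^S_{22}H+(h^S_{\nu\nu}-3h^S_{22})h_{22}\geq\tfrac{11}{6}h^S_{22}\tilde D+\tfrac12(h^S_{\nu\nu}-3h^S_{22})\tilde D=\tfrac13 h^S_{22}\tilde D+\tfrac12 h^S_{\nu\nu}\tilde D\geq\tfrac13 H^S\tilde D$, which is nonnegative. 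The constant $\tfrac{11}{6}$ is not sacred, but you need any preserved lower bound $H\geq c\tilde D$ with $c>\tfrac32$; the first-contact bound only gives $c=1$.

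A second, smaller gap: you estimate $|\nabla^S_\nu h^S_{22}|\leq C(S)$ and try to absorb it with a $\tilde D$-large term. This fails where $H^S$ is small, since the positive term above is $\tfrac13 H^S\tilde D$, not $\tfrac13\tilde D$. You need the structural hypothesis $|\nabla_S A_S|\leq H_S L_1$ from (\ref{A:S-bound}), which gives $|\nabla^S_\nu h^S_{22}|\leq H^S L_1$ and hence $\nabla_1\tilde h_{22}\geq H^S(\tfrac13\tilde D-L_1)\geq 0$ once $\tilde D\geq 3L_1$. The $\partial_1$ case is genuinely easier (your ``identically'' undersells it): there $H\geq 2h_{11}$ from $h_{22}\geq h_{11}$ alone is already enough.
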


\begin{proof}
We argue by contradiction. Suppose there is a first time $t_0 \in (0,T)$ and a point $x_0 \in \Sigma$ such that $\tilde{h}(v,v)=\tilde{D}/2$ for some unit tangent vector $v \in T_{x_0}\Sigma_{t_0}$. We will derive a contradiction when $\tilde{D}>0$ is sufficiently large, depending only on $S$. There are two different cases to consider: either $x_0$ lies in the interior of $\Sigma$ or $x_0 \in \partial \Sigma$.

Suppose first $x_0$ is an interior point of $\Sigma$. We can extend $v$ to a neighborhood of $x_0$ in $\Sigma$ by parallel transport along radial geodesics (with respect to $\Sigma_{t_0}$) emanating from $x_0$, and then extend $v$ being constant in time. In other words, we have at $(x_0,t_0)$
\begin{equation}
\label{E:v-extend}
\nabla v =0 \quad \text{ and } \quad \partial_t v=0.
\end{equation}
Then the smooth function defined by
\[ f :=\tilde{h}(v,v) - \frac{1}{2} \tilde{D} g(v,v) \]
has an interior minimum at $(x_0,t_0)$ within a spacetime neighorbood of $(x_0,t_0)$ in $\Sigma \times (0,t_0]$. By maximum principle, we have at $(x_0,t_0)$
\begin{equation}
\label{E:proof-1}
\nabla f=0, \quad \Delta f \geq 0 \quad \text{ and } \quad \partial_t f \leq 0.
\end{equation}
We will show that this gives rise to a contradiction, provided that $\tilde{D}$ is sufficiently large depending only on $S$.

\vspace{1ex}

\textit{Claim: $H \geq 11\tilde{D}/6$ for all $t \in [0,T)$.}

\vspace{1ex}

\textit{Proof of Claim}: By (\ref{E:A-Atilde-bound}), we have $h_{ij} \geq \frac{11}{12} \tilde{D} g_{ij}$ at $t=0$ provided that $\tilde{D}$ is sufficiently large depending on $S$. Therefore, $H \geq \frac{11}{6} \tilde{D}$ at $t=0$ and the claim follows from Corollary \ref{C:H-preserve}. Note that Cauchy-Schwarz inequality implies that for all $t \in [0,T)$, we have whenever $\tilde{D} \geq 1$,
\[ |A|^2 \geq \frac{1}{2}H^2 \geq \frac{121}{72} \tilde{D}^2 \geq 1. \]
On the other hand, by Lemma \ref{L:MCF} (i) and (\ref{E:htilde-evolve}), we have at $(x_0,t_0)$ that  
\[ (\partial_t -\Delta) f \geq  |A|^2 \tilde{h}(v,v) +O(|A|^2) >0 \]
provided that $\tilde{h}(v,v)=\tilde{D}/2$ is sufficiently large depending on $S$. This contradicts (\ref{E:proof-1}). 

Finally, we show that $x_0$ cannot be a boundary point of $\Sigma$ either. Suppose $x_0 \in \partial \Sigma$. Since $\tilde{A}$ decomposes at the boundary by Lemma \ref{L:A-tilde-1}. We must have either $v=\partial_1$ or $v=\partial_2$ where ${\partial_1,\partial_2}$ is the orthonormal frame (with respect to $\Sigma_{t_0}$) from the Fermi coordinates at $x_0 \in \partial \Sigma$. Extend $v$ to a spacetime neighborhood of $(x_0,t_0)$ (note that $\partial \Sigma_{t_0}$ is convex so any point close to $x_0$ can be connected to $x_0$ by a radial geodesic) and define $f$ as before. To arrive at a contradiction, it suffices to show $\partial_1 f \geq 0$ at $(x_0,t_0)$ when $\tilde{D}>0$ is sufficiently large, depending only on $S$. If $\partial_1 f>0$, then $x_0$ cannot be a spatial minimum. If $\partial_1 f =0$, then the maximum principle can be applied to give the same contradiction as in the interior case. 

Suppose $v=\partial_1$ at $(x_0,t_0)$. Then $\tilde{h}_{22} \geq \tilde{h}_{11}=\tilde{D}/2$ at $(x_0,t_0)$. By Lemma \ref{L:A-tilde-1}, (\ref{E:N-h11-tilde}) and (\ref{A:S-bound}), we have at $(x_0,t_0)$ that
\begin{align*}
\partial_1 f= \nabla_1 \tilde{h}_{11}=& 2h^S_{22}H +(h^S_{\nu \nu} -3 h^S_{22}) h_{11}+ \nabla_\nu^S h^S_{22} \\
 \geq &  (h^S_{\nu \nu} + h^S_{22}) h_{11} +\nabla_\nu^S h^S_{22} \\
= & \frac{1}{2} H^S \tilde{D} +\nabla_\nu^S h^S_{22} \geq 0
\end{align*}
provided that $\tilde{D} \geq 2L_1$. 

Suppose now $v=\partial_2$ at $(x_0,t_0)$. Then $\tilde{h}_{11} \geq  \tilde{h}_{22}=\tilde{D}/2$ at $(x_0,t_0)$. By the claim above, Lemma \ref{L:A-tilde-1}, (\ref{E:N-h22-tilde}) and (\ref{A:S-bound}), we have at $(x_0,t_0)$ that
\begin{align*}
\partial_1 f = \nabla_1 \tilde{h}_{22}=& h^S_{22} H + (h^S_{\nu \nu} -3 h^S_{22}) h_{22} - \nabla_\nu^S h^S_{22}  \\
 \geq &  \frac{11}{6} h^S_{22} \tilde{D} + \frac{1}{2} (h^S_{\nu \nu} -3 h^S_{22}) \tilde{D} -\nabla_\nu^S h^S_{22} \\
 \geq & \frac{1}{3} H^S \tilde{D} -\nabla_\nu^S h^S_{22} \geq 0
\end{align*}
provided that $\tilde{D} \geq 3L_1$. This finishes the proof of Theorem \ref{T:tilde-convexity-I}.
\end{proof}

Note that dim $\Sigma=2$ is crucially used in the proof above so that one can extract a term involving $H$, on which we have a good lower bound. Using (\ref{E:A-Atilde-bound}), we immediately have the following corollary.

\begin{corollary}
\label{C:convexity}
There exists a constant $D_0=D_0(S)>0$ such that whenever $\{\Sigma_t\}_{t\in[0,T)}$ is a solution to the free-boundary mean curvature flow with 
\[ h_{ij} \geq D g_{ij} \quad \text{ at $t=0$}  \]
for some constant $D \geq D_0$, then we have
\[  h_{ij} > \frac{1}{3} D g_{ij}  \quad \text{ for all $t\in [0,T)$} .\]
\end{corollary}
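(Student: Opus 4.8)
\textbf{Proof proposal for Corollary \ref{C:convexity}.}

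The plan is to derive Corollary \ref{C:convexity} directly from Theorem \ref{T:tilde-convexity-I} by translating the convexity hypothesis between the actual second fundamental form $A$ and the perturbed one $\tilde{A}$, using the uniform bound $\tilde{A} = A + O(1)$ from (\ref{E:A-Atilde-bound}). The key point is that the error term $O(1)$ in $|\tilde{h}_{ij} - h_{ij}|$ is controlled by a constant $C = C(S)$ that depends only on the barrier, not on the flow or on $D$; so by taking $D$ large enough in terms of this $C$ and the constant $\tilde{D}_0(S)$ from Theorem \ref{T:tilde-convexity-I}, we can convert a lower bound for $h_{ij}$ into a (slightly weaker) lower bound for $\tilde{h}_{ij}$, run the perturbed preservation statement, and then convert back.

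Concretely, I would first fix the constant $C = C(S)$ from (\ref{E:A-Atilde-bound}) so that $-C g_{ij} \leq \tilde{h}_{ij} - h_{ij} \leq C g_{ij}$ holds as a bilinear-form inequality on $T\Sigma_t$ for all $t$ (this is exactly the meaning of the $O(1)$ estimate, and it is uniform in $t$ by Proposition \ref{P:P-estimate} and (\ref{E:P-C^2})). Then I would set $D_0 := \max\{ 2\tilde{D}_0 + 6C, \, 12C \}$ (or any similar explicit choice that makes the arithmetic below work). Suppose now $h_{ij} \geq D g_{ij}$ at $t = 0$ with $D \geq D_0$. At $t = 0$ we get $\tilde{h}_{ij} \geq (D - C) g_{ij} =: \tilde{D} g_{ij}$, and since $D \geq D_0 \geq 2\tilde{D}_0 + 6C$ we have $\tilde{D} = D - C \geq \tilde{D}_0$, so Theorem \ref{T:tilde-convexity-I} applies and yields $\tilde{h}_{ij} > \tfrac{1}{2}\tilde{D} g_{ij}$ for all $t \in [0,T)$. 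Converting back, $h_{ij} \geq \tilde{h}_{ij} - C g_{ij} > (\tfrac{1}{2}(D-C) - C) g_{ij} = (\tfrac{1}{2}D - \tfrac{3}{2}C) g_{ij}$. Finally, using $D \geq D_0 \geq 12C$, i.e. $C \leq D/12$, one checks $\tfrac{1}{2}D - \tfrac{3}{2}C \geq \tfrac{1}{2}D - \tfrac{1}{8}D = \tfrac{3}{8}D \geq \tfrac{1}{3}D$, so $h_{ij} > \tfrac{1}{3} D g_{ij}$ for all $t \in [0,T)$, as claimed.

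The only real subtlety — and the step I would be most careful about — is making sure the $O(1)$ estimate in (\ref{E:A-Atilde-bound}) really is a two-sided bilinear-form bound with a constant independent of $t$ and of the flow; this is where one must invoke that $\|P\|_{C^0(\R^3)} \leq 4\|\mathring{A}_S\|_{C^0(S)} \leq C(S)$ and that $P^\Sigma(u,v) = P(u,v,\nu,\nu,\nu)$ depends pointwise only on the ambient tensor $P$ and the unit normal $\nu$, so $|P^\Sigma(v,v)| \leq C(S) |v|_g^2$ uniformly. Everything else is elementary arithmetic with the constants, and one is free to enlarge $D_0(S)$ as needed; no maximum principle or PDE argument is required at this stage since all the analytic work has already been done in Theorem \ref{T:tilde-convexity-I}.
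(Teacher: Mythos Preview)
Your proposal is correct and follows exactly the paper's approach: the paper's proof is the single sentence ``Using (\ref{E:A-Atilde-bound}), we immediately have the following corollary,'' and you have simply written out the arithmetic of that translation explicitly. Your care in noting that the $O(1)$ constant in $\tilde{A}=A+O(1)$ depends only on $S$ (via $\|P\|_{C^0(\mathbb{R}^3)}\leq 4\|\mathring{A}_S\|_{C^0(S)}$) and is uniform in $t$ is precisely the point, and your choice of $D_0$ is a valid explicit realization of what the paper leaves implicit.
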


\begin{remark}
It is easy to see that one can indeed choose $D=D_0=0$ in case $S$ is totally umbilic. This recovers the two-dimensional case of \cite[Theorem 4.4]{Stahl96b} which says that convexity is preserved throughout the flow for umbilic barrier surface $S$. In the non-umbilic case, we have shown that a convexity lower bound may not be preserved (c.f. \cite[Proposition 4.5]{Stahl96b}) but will at most decrease by a factor of $1/3$.
\end{remark}

From now on, we will assume that the hypothesis in Corollary \ref{C:convexity} is satisfied so the surfaces $\Sigma_t$ are convex for all $t \in [0,T)$. In particular, we always have $|A| \leq H$.

\section{Preservation of curvature pinching}
\label{S:pinching}

In this section, we want to derive another convexity pinching estimate, which is required to show that the rescaled flow converges to a shrinking half-sphere. 

As already observed in \cite{Stahl96b}, it is impossible to achieve the optimal estimate
\[ \epsilon  H g_{ij} \leq h_{ij} \leq \kappa H g_{ij} \]
for $0 < \epsilon \leq 1/2 < \kappa <1$ as in \cite{Huisken84}. A counterexample is given by $\Sigma_0$ which is a spherical cap intersecting the unit sphere $S=\mathbb{S}^2$ orthogonally but $\Sigma_t$ will not remain spherical for any $t>0$ (this example also shows that the flow is not $C^3$ up to $t=0$). However, we will establish a weaker pinching estimate in Corollary \ref{C:pinching} which is sufficient for our purpose.

We first generalize \cite[Theorem 4.8]{Stahl96b} to \emph{arbitrary} convex barrier surfaces for the perturbed second fundamental form.

\begin{theorem}
\label{T:pinching-pre}
There exists a constant $\tilde{D}_1=\tilde{D}_1(S)>0$ such that whenever $\{\Sigma_t\}_{t\in[0,T)}$ is a solution to the free-boundary mean curvature flow with 
\[ \tilde{h}_{ij} \geq \epsilon |\tilde{A}| g_{ij} +\tilde{D}g_{ij} \quad \text{ at $t=0$}  \]
for some constants $\tilde{D} \geq \tilde{D}_1$ and $\epsilon \in (0,1/100)$, then we have 
\[  \tilde{h}_{ij} > \frac{1}{2} ( \epsilon |\tilde{A}| g_{ij} +\tilde{D} g_{ij}) \quad \text{ for all $t\in [0,T)$} .\]
\end{theorem}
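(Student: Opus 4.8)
The plan is to mimic the proof of Theorem~\ref{T:tilde-convexity-I}, now applied to the symmetric $(0,2)$-tensor $M_{ij} := \tilde{h}_{ij} - \tfrac12(\epsilon|\tilde A| g_{ij} + \tilde D g_{ij})$, arguing by contradiction. Suppose $t_0 \in (0,T)$ is the first time and $x_0 \in \Sigma$ a point at which $M(v,v) = 0$ for some unit vector $v \in T_{x_0}\Sigma_{t_0}$; extend $v$ by radial parallel transport and constant in time as in \eqref{E:v-extend}, and set $f := M(v,v) = \tilde h(v,v) - \tfrac12\epsilon|\tilde A| - \tfrac12 \tilde D$. As before, split into the interior and boundary cases. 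In both cases one first records, exactly as in the Claim inside Theorem~\ref{T:tilde-convexity-I}, that the hypothesis together with \eqref{E:A-Atilde-bound} forces $H$ to be large initially (of order $\tilde D$) and hence, by Corollary~\ref{C:H-preserve}, $H \geq c\tilde D$ and $|A|^2 \geq \tfrac12 H^2 \geq 1$ for all $t$; we may also assume $\epsilon|\tilde A| + \tilde D \leq 2H$ throughout by convexity (Corollary~\ref{C:convexity}).

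For the interior case I would compute $(\partial_t - \Delta)f$ at $(x_0,t_0)$. Using \eqref{E:htilde-evolve} for the $\tilde h(v,v)$ part, Corollary~\ref{C:|A-tilde|-evolve} for the $|\tilde A|$ part (noting $|\tilde A| \geq 1$), and $\partial_t g_{ij} = -2Hh_{ij}$ (Lemma~\ref{L:MCF}(i)) for the metric terms, the leading terms should assemble into
\[
(\partial_t - \Delta)f \;\geq\; |A|^2\Bigl(\tilde h(v,v) - \tfrac12\epsilon|\tilde A| - \tfrac12\tilde D\Bigr) \;+\; (\text{good gradient/curvature terms}) \;+\; O(1+|A|^2),
\]
where the term in parentheses is $f = 0$ at $(x_0,t_0)$. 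The point is that the reaction terms from $-2Hh_{im}\tilde h^m_{\ j}$ and the $-2H h_{ij}$ coming from $\partial_t(\epsilon|\tilde A|g_{ij})$ must combine favorably — this is where the pinching structure and the smallness $\epsilon < 1/100$ are used, much as in Huisken's original pinching argument and in \cite[Theorem~4.8]{Stahl96b}. The $O(1+|A|^2)$ error, which is $O(|A|^2)$ since $|A|\geq 1$, is then dominated by choosing $\tilde D$ (hence $|A|$, hence the coefficient) large enough depending only on $S$, yielding $(\partial_t-\Delta)f > 0$, contradicting the maximum principle inequality $\partial_t f \leq 0$, $\Delta f \geq 0$.

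For the boundary case, since $\tilde A$ decomposes at $\partial\Sigma$ by Lemma~\ref{L:A-tilde-1}, $v$ is either $\partial_1$ or $\partial_2$ in the Fermi frame, and it suffices to show $\partial_1 f \geq 0$ at $(x_0,t_0)$ when $\tilde D$ is large. Here I use Lemma~\ref{L:N-htilde}: along $\partial\Sigma$, $\tilde h_{11} = h_{11}$, $\tilde h_{22} = h_{22}$, $|\tilde A|^2 = h_{11}^2 + h_{22}^2$, and the boundary derivatives of $\tilde h_{11}, \tilde h_{22}$ are given by \eqref{E:N-h11-tilde}--\eqref{E:N-h22-tilde}, while $\partial_1 g(v,v) = 0$ (Fermi coordinates, $\nabla_1 v = 0$ since $v \in \{\partial_1,\partial_2\}$). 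One also needs $\partial_1|\tilde A| = (h_{11}\nabla_1\tilde h_{11} + h_{22}\nabla_1\tilde h_{22})/|\tilde A|$. Substituting the formulas and using $h^S_{22} \geq 0$, $h^S_{\nu\nu} \geq 0$, $H^S = h^S_{22} + h^S_{\nu\nu}$, $|\nabla^S_\nu h^S_{22}| \leq L_1$, and the lower bounds on $h_{11}, h_{22}$ coming from $\tilde h(v,v) = \tfrac12(\epsilon|\tilde A|+\tilde D)$ and the Claim ($H \geq c\tilde D$), the dominant contribution is a positive multiple of $H^S \tilde D$ (as in Theorem~\ref{T:tilde-convexity-I}), which absorbs the bounded terms once $\tilde D \geq \tilde D_1(S)$; note $\epsilon < 1/100$ keeps the $\epsilon|\tilde A|$ contributions from spoiling the sign. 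This closes both cases.

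The main obstacle I anticipate is the algebra in the interior case: verifying that the combination of the reaction term $|A|^2\tilde h_{ij} - 2H h_{im}\tilde h^m_{\ j}$ from \eqref{E:htilde-evolve}, the term $|A|^2|\tilde A|$ from Corollary~\ref{C:|A-tilde|-evolve}, and the two $-2Hh_{ij}$ metric-evolution contributions genuinely produces a nonnegative multiple of $f$ plus controllable errors — i.e. that $M_{ij} \geq 0$ is preserved by the PDE reaction terms up to the $O(|A|^2)$ perturbation. In Huisken's closed setting this is the statement that the pinching cone is preserved; here one must check it survives both the perturbation error terms (harmless, since they are lower order once $\tilde D$ is large) and the fact that we have $\tilde A$ rather than $A$ in the quadratic terms, for which \eqref{E:A-Atilde-bound}--\eqref{E:nablaA-Atilde-bound} convert everything back to $A$ at the cost of $O(1+|A|^2)$ errors. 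A secondary subtlety is making sure the required lower bound $\epsilon|\tilde A| + \tilde D \leq 2H$ (or whatever normalization is convenient) holds globally, which follows from convexity $|A| \leq H$ together with $|\tilde A| = |A| + O(1)$.
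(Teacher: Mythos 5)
Your overall strategy coincides with the paper's: a contradiction argument via the tensor maximum principle applied to $f=\tilde h(v,v)-\tfrac12(\epsilon|\tilde A|+\tilde D)g(v,v)$, split into an interior case and a boundary case, with the boundary case handled exactly as you describe (the decomposition of $\tilde A$ along $\partial\Sigma$ from Lemma \ref{L:A-tilde-1}, the normal derivatives \eqref{E:N-h11-tilde}--\eqref{E:N-h22-tilde} and \eqref{E:N-|A-tilde|-ineq}, the lower bound $H\geq 11\tilde D/6$, and the extraction of a positive multiple of $H^S\tilde D$ that absorbs $|\nabla^S_\nu h^S_{22}|\leq H_S L_1$). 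Your boundary sketch is sound.

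The interior case, however, contains a genuine gap as written. You display
\[
(\partial_t-\Delta)f\;\geq\;|A|^2\Bigl(\tilde h(v,v)-\tfrac12\epsilon|\tilde A|-\tfrac12\tilde D\Bigr)+\cdots+O(1+|A|^2)
\]
and note that the parenthesis equals $f=0$ at $(x_0,t_0)$. But then the right-hand side is merely $O(|A|^2)$ with no sign, and no choice of $\tilde D$ ``large'' can rescue it: there is nothing left to dominate the perturbation error. The point you are missing is that the additive constant $\tilde D g_{ij}$ in the pinching hypothesis contributes \emph{only} the metric-evolution term $\tilde D H h(v,v)$ and \emph{no} reaction term $|A|^2\tilde D$; consequently, after using that $v$ is a null eigenvector of $M_{ij}=\tilde h_{ij}-\tfrac12(\epsilon|\tilde A|+\tilde D)g_{ij}$ (so $\tilde h(v,\cdot)=\tfrac12(\epsilon|\tilde A|+\tilde D)v^\flat$, which makes $-2Hh_{im}\tilde h^m_{\phantom{m}j}v^iv^j$ cancel against the two metric-evolution contributions up to $O(|A|^2)$, via $\tilde A=A+O(1)$ and $|A|\leq H$), the surviving reaction term is
\[
|A|^2\Bigl(\tilde h(v,v)-\tfrac12\epsilon|\tilde A|\Bigr)=\tfrac12\tilde D\,|A|^2,
\]
\emph{not} $|A|^2 f$. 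This strictly positive surplus of size $\tilde D|A|^2/2$ is precisely what beats the $O(|A|^2)$ error from the perturbation tensor once $\tilde D\geq C(S)$, and it is the entire reason the theorem is stated with the additive $\tilde D g_{ij}$ rather than as preservation of the cone $\tilde h_{ij}\geq\epsilon|\tilde A|g_{ij}$ alone (which, as your own ``main obstacle'' paragraph implicitly concedes, would not survive an $O(|A|^2)$ perturbation). Once you replace your displayed inequality by this one, the rest of your argument closes.
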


\begin{proof}
We argue by contradiction as in Theorem \ref{T:tilde-convexity-I}. Suppose there is a first time $t_0 \in (0,T)$ and a point $x_0 \in \Sigma$ such that 
\[ \tilde{h}(v,v)=\frac{1}{2}( \epsilon |\tilde{A}|+\tilde{D} )\]
for some unit tangent vector $v \in T_{x_0}\Sigma_{t_0}$. As before, we extend the vector $v$ locally satisfying (\ref{E:v-extend}) and consider the function
\[ f:=\tilde{h}(v,v) -  \frac{1}{2} ( \epsilon |\tilde{A}| +\tilde{D}  )g(v,v).\]
By the claim in the proof of Theorem \ref{T:tilde-convexity-I}, we have $H \geq 11\tilde{D}/6$ for all $t \in [0,T)$ and $1=O(|A|^2)$. Moreover, we can assume $|\tilde{A}| \geq 1$ by (\ref{E:A-Atilde-bound}).

If $x_0$ is an interior point of $\Sigma$, by Lemma \ref{L:MCF} (i), (\ref{E:htilde-evolve}) and Corollary \ref{C:|A-tilde|-evolve}, we have at $(x_0,t_0)$ that 
\[
(\partial_t -\Delta) f \geq  |A|^2 \left( \tilde{h}(v,v) - \frac{1}{2} \epsilon |\tilde{A}| \right) + O(|A|^2) >0
\]
provided that $\tilde{D}$ is sufficiently large. Hence $x_0$ cannot be an interior point of $\Sigma$.

Suppose now $x_0 \in \partial \Sigma$ and $v=\partial_1$. Then $\tilde{h}_{22} \geq \tilde{h}_{11}=\frac{1}{2} ( \epsilon |\tilde{A}|+\tilde{D} )$ at $(x_0,t_0)$. By Lemma \ref{L:A-tilde-1}, (\ref{E:N-h11-tilde}), (\ref{A:S-bound}), (\ref{E:N-|A-tilde|-ineq}) and Cauchy-Schwarz inequality, we have at $(x_0,t_0)$ that (recall that $\epsilon < \frac{1}{100}$)
\begin{align*}
\partial_1 f =& \nabla_1 \tilde{h}_{11} - \frac{1}{2} \epsilon N (|\tilde{A}|) \\
\geq &  2h^S_{22}H +(h^S_{\nu \nu} -3 h^S_{22}) h_{11}+ \nabla_\nu^S h^S_{22}  \\
& - \frac{\epsilon}{2} \Big(3 h^S_{22} \frac{H}{|A|} h_{11} + (h^S_{\nu\nu}-2h^S_{22})|\tilde{A}| + \sqrt{2} |\nabla_\nu^S h^S_{22}|\Big)\\
\geq & \left(h^S_{\nu \nu} +\left( 1-\frac{3 \epsilon}{\sqrt{2}} \right)h^S_{22} \right) h_{11}  - \frac{\epsilon}{2} |\tilde{A}|   (h^S_{\nu\nu}-2h^S_{22})  -2  |\nabla_\nu^S h^S_{22}|\\
\geq & \left(h^S_{\nu \nu} +\left( 1-\frac{3 \epsilon}{\sqrt{2}} \right)h^S_{22} \right) \frac{\tilde{D}}{2} -2  |\nabla_\nu^S h^S_{22}| \\
\geq & \frac{1}{4} H^S \tilde{D} -2  |\nabla_\nu^S h^S_{22}| \geq 0
\end{align*}
provided that $\tilde{D} \geq 8 L_1$. 

Suppose $x_0 \in \partial \Sigma$ and $v=\partial_2$. Then $\tilde{h}_{11} \geq \tilde{h}_{22}=\frac{1}{2} ( \epsilon |\tilde{A}|+\tilde{D} )$ at $(x_0,t_0)$. By Lemma \ref{L:A-tilde-1}, (\ref{E:N-h22-tilde}), (\ref{A:S-bound}), (\ref{E:N-|A-tilde|-ineq}) and Cauchy-Schwarz inequality, we have at $(x_0,t_0)$ that (recall that $|A| \leq H$ by convexity and $\epsilon < \frac{1}{100}$) 
\begin{align*}
\partial_1 f =&\nabla_1 \tilde{h}_{22} - \frac{1}{2} \epsilon N (|\tilde{A}|)  \\
\geq &  h^S_{22} H + (h^S_{\nu \nu} -3 h^S_{22}) h_{22} - \nabla_\nu^S h^S_{22} \\
& - \frac{\epsilon}{2} \Big(3 h^S_{22} \frac{H}{|A|} h_{11} + (h^S_{\nu\nu}-2h^S_{22})|\tilde{A}| + \sqrt{2} |\nabla_\nu^S h^S_{22}|\Big)\\
 \geq &  h^S_{22} \left(H- \frac{3\sqrt{2}}{2} \epsilon h_{11}\right) -\frac{\epsilon}{2} |\tilde{A}| h^S_{22}   + (h^S_{\nu \nu} -3 h^S_{22}) \frac{\tilde{D}}{2} -2  |\nabla_\nu^S h^S_{22}| \\
 \geq  & h^S_{22} \left( 1-\frac{3\sqrt{2}}{2}\epsilon-\frac{1}{2} \epsilon\right)H + (h^S_{\nu \nu} -3 h^S_{22}) \frac{\tilde{D}}{2} -2  |\nabla_\nu^S h^S_{22}| \\
 \geq & h^S_{22} (1-3 \epsilon) \frac{11\tilde{D}}{6}+  (h^S_{\nu \nu} -3 h^S_{22}) \frac{\tilde{D}}{2} -2  |\nabla_\nu^S h^S_{22}|\\
 \geq & \frac{1}{200} H^S \tilde{D} -2 |\nabla_\nu^S h^S_{22}| \geq 0
\end{align*}
provided that $\tilde{D} \geq 400 L_1$. This contradicts the maximum principle.
\end{proof}

We see again that it is important to have dim $\Sigma=2$ so that the positive term involving $H$ arises. Using (\ref{E:A-Atilde-bound}) and the Cauchy-Schwarz inequality $\frac{H}{\sqrt{2}} \leq |A|$, we immediately have the following corollary.

\begin{corollary}
\label{C:pinching}
There exists a constant $D_1=D_1(S)>0$ such that whenever $\{\Sigma_t\}_{t\in[0,T)}$ is a solution to the free-boundary mean curvature flow with 
\[ h_{ij} \geq D g_{ij} \quad \text{ at $t=0$}  \]
for some constant $D \geq D_1$ and
\[ h_{ij} \geq \epsilon |A| \quad \text{ at $t=0$},    \]
for some $\epsilon \in (0,1/100)$, then we have
\[  h_{ij} > \frac{\epsilon}{2 \sqrt{2}} H g_{ij} \quad \text{ for all $t\in [0,T)$} .\]
\end{corollary}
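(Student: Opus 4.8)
The plan is to derive Corollary \ref{C:pinching} directly from its perturbed counterpart Theorem \ref{T:pinching-pre}, with the passage between $A$ and $\tilde A$ controlled by the comparison estimates (\ref{E:A-Atilde-bound}) and the passage from $|A|$ to $H$ supplied by the Cauchy--Schwarz inequality $H/\sqrt 2\le |A|\le H$, which is legitimate because convexity is preserved.

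First I would fix $D_1=D_1(S)$ large enough to exceed $D_0$ from Corollary \ref{C:convexity}, to exceed $\tilde D_1$ from Theorem \ref{T:pinching-pre}, and to dominate a fixed multiple of the $C(S)$-constants bounding $\|P^\Sigma\|_{C^0}$ (recall $\|P^\Sigma\|_{C^0}\le 4\|\mathring{A}_S\|_{C^0(S)}=C(S)$) and the $O(1)$ discrepancies in (\ref{E:A-Atilde-bound}). With $D\ge D_1$, Corollary \ref{C:convexity} applies, so $\Sigma_t$ stays convex on all of $[0,T)$ and hence $|A|\le H$, equivalently $H/\sqrt 2\le |A|$, holds for every $t$. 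This is, once again, where $\dim\Sigma=2$ enters, since it is what makes the preservation-of-convexity statement available.

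Second I would translate the initial hypotheses into a statement about the perturbed tensor. At $t=0$ one has $h_{ij}\ge\epsilon|A|g_{ij}$ and $h_{ij}\ge Dg_{ij}$; convex-combining these two bounds (so as to keep a definite positive multiple of $g_{ij}$ available to absorb the $O(1)$ errors) and using $\tilde h_{ij}=h_{ij}+P^\Sigma_{ij}$ together with $\big||\tilde A|-|A|\big|=O(1)$ from (\ref{E:A-Atilde-bound}), I would obtain $\tilde h_{ij}\ge\tilde\epsilon|\tilde A|g_{ij}+\tilde D g_{ij}$ at $t=0$ for some $\tilde\epsilon\in(0,1/100)$ (as close to $\epsilon$ as one wishes once $D_1$ is large) and some $\tilde D\ge\tilde D_1$. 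Theorem \ref{T:pinching-pre} then yields $\tilde h_{ij}>\tfrac12(\tilde\epsilon|\tilde A|g_{ij}+\tilde D g_{ij})$ for all $t\in[0,T)$.

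Finally I would undo the translation: from $h_{ij}=\tilde h_{ij}-P^\Sigma_{ij}$ and $|\tilde A|\ge|A|-O(1)$ one gets $h_{ij}>\tfrac{\tilde\epsilon}{2}|A|g_{ij}+\big(\tfrac{\tilde D}{2}-C(S)\big)g_{ij}\ge\tfrac{\tilde\epsilon}{2}|A|g_{ij}$ since $\tilde D\ge\tilde D_1$ is large, and then $|A|\ge H/\sqrt 2$ gives $h_{ij}>\tfrac{\tilde\epsilon}{2\sqrt 2}Hg_{ij}$ on $[0,T)$, which is the claimed pinching (with the stated constant, up to the harmless replacement of $\epsilon$ by $\tilde\epsilon$). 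The only point requiring genuine care is the combination step in the previous paragraph: the additive term $\tilde D g_{ij}$ must survive the $O(1)$ perturbation by $P^\Sigma$, and this is precisely why a sufficiently large convexity threshold $D\ge D_1$ — rather than mere convexity — is needed; everything else is substitution into Theorem \ref{T:pinching-pre} and the elementary inequality $H/\sqrt2\le|A|$.
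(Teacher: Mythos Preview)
Your proposal is correct and matches the paper's approach: the paper derives the corollary from Theorem~\ref{T:pinching-pre} via the comparison (\ref{E:A-Atilde-bound}) and the inequality $H/\sqrt{2}\le|A|$, exactly as you outline. Two minor slips are worth cleaning up. First, $|A|\le H$ (a consequence of convexity) is not \emph{equivalent} to $H/\sqrt{2}\le|A|$; the latter is Cauchy--Schwarz and holds for any surface in dimension two, while the former is what convexity buys you. Second, since $D_1=D_1(S)$ must be fixed independently of $\epsilon$, your convex-combination step costs a genuine fixed factor (e.g.\ taking $\lambda=1/2$ gives $\tilde\epsilon=\epsilon/2$), so one actually lands on a constant like $\epsilon/(4\sqrt{2})$ rather than the stated $\epsilon/(2\sqrt{2})$; this discrepancy is harmless, as only the existence of \emph{some} positive pinching constant depending on $\Sigma_0$ is used in Section~\ref{S:pinching-A-circle}.
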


\section{Pinching estimate for the traceless second fundamental form}
\label{S:pinching-A-circle}

In this section, we use the Stampacchia iteration scheme to prove a pinching estimate for the traceless second fundamental form. This is the key ingredient to show that $\Sigma_t$ evolves to half of a ``round'' point. As in the previous sections, we need to first work with the perturbed second fundamental form $\tilde{A}$. The corresponding estimates for $A$ then follow. 

According to Corollary \ref{C:convexity} and \ref{C:pinching}, assuming that $\Sigma_0$ is sufficiently convex, then $\Sigma_t$ remains convex for all time and there exists a constant $\epsilon=\epsilon(\Sigma_0)>0$ such that
\begin{equation}
\label{E:lower-pinch}
h_{ij} \geq \epsilon H g_{ij} \quad \text{ for all $t \in [0,T)$}.
\end{equation}
Similarly, by Theorem \ref{T:pinching-pre}, we can also assume that there exists a constant $\tilde{\epsilon}=\tilde{\epsilon}(\Sigma_0,S)>0$ such that
\begin{equation}
\label{E:lower-pinch-tilde}
\tilde{h}_{ij} \geq \tilde{\epsilon} \tilde{H} g_{ij} \quad \text{ for all $t \in [0,T)$}.
\end{equation}
We shall always assume the two inequalities above in the rest of the paper.

First, we recall the following general result in \cite[Theorem 3.1]{Edelen16}. Note that we allow an extra term $|\Sigma_t|$ and $\int_{\Sigma_t} f^p$ (with coefficient depending possibly on $\beta$) in (\ref{E:POINCARE}) in contrast to the ``Poincar\'{e}-like'' inequality in \cite{Edelen16}. It is easy to see that the arguments still go through since this additional term can be absorbed into the corresponding terms in the ``Evolution-like'' inequality (\ref{E:EVOLUTION}). Moreover, the constants depending on $S$ in \cite{Edelen16} actually only depend on the constants $K,L_1,L_2$ in (\ref{A:Z-bound}) and (\ref{A:S-bound}).

\begin{theorem}
\label{T:Stampacchia}
Let $\{\Sigma_t\}_{t\in[0,T)}$ is a solution to the free-boundary mean curvature flow with $T < \infty$. Let $f_\alpha \geq 0$ be some function on $\Sigma_t$, depending on some parameters $\alpha=\alpha(S,\Sigma_0,T)$. Let $\tilde{G} \geq 0$ and $\tilde{H}>0$ be functions on $\Sigma_t$ such that
\begin{equation}
\label{A:Stamp-bound}
H=O(\tilde{H}), \qquad \nabla \tilde{H} = O(\tilde{G}).
\end{equation}
Let $f=f_\alpha \tilde{H}^\sigma$, and $f_k=(f-k)_+$, where $\sigma>0$ will be small and $k>0$ large. Write $A(k)=\{f \geq k\}$ and $A(k,t)=A(k) \cap \Sigma_t$.

Suppose $f$ satisfies the following inequalities: there exist positive constants $c=c(S,\Sigma_0,T,\alpha)$ and $C=C(S,\Sigma_0,T,\alpha, p,\sigma, \beta)$, such that for any $p > p_0(\alpha,c)$, $0<\sigma<1/2$, $k>0$ and $\beta>0$,
\begin{align}
\label{E:POINCARE}
\frac{1}{c} \int_{\Sigma_t} f^p \tilde{H}^2 \leq & p(1+\beta^{-1}) \int_{\Sigma_t} f^{p-2} |\nabla f|^2 +(1+\beta p) \int_{\Sigma_t} \frac{\tilde{G}^2}{\tilde{H}^{2-\sigma}} f^{p-1} \\
& + \int_{\partial \Sigma_t} f^{p-1} \tilde{H}^\sigma +C\left( \int_{\Sigma_t} f^p  +|\Sigma_t|\right), \nonumber
\end{align}
\begin{align}
\label{E:EVOLUTION}
\partial_t \int_{\Sigma_t} f^p_k \leq & -\frac{1}{3} p^2 \int_{\Sigma_t} f_k^{p-2} |\nabla f|^2 -\frac{p}{c} \int_{\Sigma_t} \frac{\tilde{G}^2}{\tilde{H}^{2-\sigma}} f^{p-1}_k +cp\sigma \int_{A(k,t)} \tilde{H}^2 f^p \\
& -\frac{1}{5} \int_{\Sigma_t} \tilde{H}^2 f^p_k +C \int_{A(k,t)} f^p +C|A(k)| +cp \int_{\partial \Sigma_t} f^{p-1}_k \tilde{H}^\sigma. \nonumber
\end{align}
Then, for $p$ sufficiently large, and $\sigma$ sufficiently small (depending on $p$), $f$ is uniformly bounded on $\Sigma \times [0,T)$ with the bound depending only on $S,\Sigma_0,T,\alpha,p$ and $\sigma$.
\end{theorem}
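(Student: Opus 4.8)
The plan is to follow the standard Stampacchia iteration scheme adapted to the free-boundary setting, treating the two differential inequalities \eqref{E:POINCARE} and \eqref{E:EVOLUTION} as the ``Poincar\'{e}-like'' and ``Evolution-like'' inputs. First I would integrate \eqref{E:EVOLUTION} in time from $0$ to $t$; since $f_k = 0$ on $\Sigma_0$ once $k$ is large enough (as $f$ is bounded at $t=0$), the left-hand side becomes $\int_{\Sigma_t} f_k^p$. The terms on the right involving $\int_{\Sigma_t} f_k^{p-2}|\nabla f|^2$ and $\int_{\Sigma_t} \tilde{G}^2 \tilde{H}^{\sigma-2} f_k^{p-1}$ are non-positive and will be used as good negative terms to absorb bad terms later; the crucial point is to control $cp\sigma \int_{A(k,t)} \tilde{H}^2 f^p$ and the boundary term $cp\int_{\partial\Sigma_t} f_k^{p-1}\tilde{H}^\sigma$.

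The key step is to combine the two inequalities. I would apply \eqref{E:POINCARE} with $f$ replaced by $f_k$ (valid since $f_k$ satisfies the same type of bounds, noting $\nabla f_k = \nabla f$ on $A(k)$ and $f_k \le f$), then substitute the resulting bound on $\int_{\Sigma_t} f_k^p \tilde{H}^2$ into the term $cp\sigma\int_{A(k,t)}\tilde H^2 f^p$ in \eqref{E:EVOLUTION}. Choosing $\beta$ appropriately (e.g. $\beta \sim 1/p$) and then $\sigma$ small depending on $p$, the factor $cp\sigma$ times the constant $c$ from the Poincar\'{e} inequality can be made small enough that the positive multiples of $\int f_k^{p-2}|\nabla f|^2$ and $\int \tilde G^2 \tilde H^{\sigma-2} f_k^{p-1}$ it produces are strictly dominated by the corresponding negative terms $-\tfrac{1}{3}p^2\int f_k^{p-2}|\nabla f|^2$ and $-\tfrac{p}{c}\int \tilde G^2\tilde H^{\sigma-2}f_k^{p-1}$ in \eqref{E:EVOLUTION}. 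The boundary integrals $\int_{\partial\Sigma_t} f_k^{p-1}\tilde H^\sigma$ appearing with positive sign in both inequalities must likewise be absorbed; one sees that the coefficient $cp$ in \eqref{E:EVOLUTION} beats the coefficient (a constant independent of $p$) multiplying the boundary term coming from \eqref{E:POINCARE} once $p$ is large, so again after the substitution the net boundary contribution has a favorable sign. After these cancellations, integrating in time yields an inequality of the form
\[
\sup_{t\in[0,T)} \int_{\Sigma_t} f_k^p + \text{(good terms)} \;\le\; C\!\int_0^T\!\!\int_{A(k,t)} f^p\,dt + C\!\int_0^T |A(k)|\,dt.
\]

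From here I would run the classical De Giorgi--Stampacchia iteration: using the Michael--Simon--Sobolev inequality on $\Sigma_t$ (with free boundary, as in \cite{Edelen16}) together with the good gradient term, one obtains a reverse-H\"{o}lder / super-level-set estimate controlling $\int_0^T\!\!\int_{A(h,t)}(f-h)^p$ in terms of a superlinear power of the measure $\||A(k)|\|$ for $h>k$, which by the standard iteration lemma (Stampacchia's lemma) forces $f_{k_0} \equiv 0$ for some finite $k_0$ depending only on $S,\Sigma_0,T,\alpha,p,\sigma$; that is, $f$ is uniformly bounded. Since $f = f_\alpha \tilde H^\sigma$ and $T<\infty$ guarantees $|\Sigma_t|$ stays bounded while providing the time-integrability needed, all constants track correctly. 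I expect the main obstacle to be the bookkeeping in the combination step: one must carefully choose the order of parameters ($p$ first, then $\beta$ as a function of $p$, then $\sigma$ small depending on $p$) so that \emph{simultaneously} the interior gradient term, the $\tilde G$-term, and the boundary term are all absorbed with room to spare, while keeping the dependence of the final constant only on the allowed quantities; the free-boundary term is the genuinely new difficulty compared to the closed case, but the factor $cp$ in \eqref{E:EVOLUTION} is exactly what makes it work.
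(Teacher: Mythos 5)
Your overall scheme (combine the Poincar\'{e}-type and evolution-type inequalities with $\beta\sim 1/p$ and $\sigma$ small depending on $p$, integrate in time, then run the Michael--Simon--Sobolev/De Giorgi--Stampacchia iteration) is the right one and is exactly the strategy of the result the paper is quoting here --- note that the paper does not reprove this theorem but cites \cite[Theorem 3.1]{Edelen16}, adding only the remark that the extra terms $C(\int f^p+|\Sigma_t|)$ are absorbable. However, there is a genuine gap in your treatment of the boundary integrals, which is precisely the new difficulty of the free-boundary setting. Both $cp\int_{\partial\Sigma_t}f_k^{p-1}\tilde H^\sigma$ in (\ref{E:EVOLUTION}) and the term $c^2p\sigma\int_{\partial\Sigma_t}f^{p-1}\tilde H^\sigma$ that you pick up when you substitute (\ref{E:POINCARE}) into the $cp\sigma\int_{A(k,t)}\tilde H^2f^p$ term are \emph{positive} contributions to the upper bound for $\partial_t\int f_k^p$; one positive term having a larger coefficient than another cannot produce ``a favorable sign,'' and there is no negative boundary term anywhere to absorb them. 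The actual mechanism in \cite{Edelen16} is a trace-type estimate: using the free boundary condition $\langle N,\nu_S\rangle=1$ along $\partial\Sigma_t$ and the divergence theorem applied to $\phi\,\nu_S^T$ (with $\nu_S$ extended off $S$ as in Section \ref{SS:barrier}), one bounds $\int_{\partial\Sigma_t}\phi$ by $C(S)\int_{\Sigma_t}\bigl(|\nabla\phi|+(1+H)\phi\bigr)$, converting the boundary integrals into interior ones that can then be absorbed by the negative gradient, $\tilde G$, and $\tilde H^2 f_k^p$ terms via Peter--Paul. Without this step the argument does not close.

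A secondary issue in the combination step: the hypothesis gives (\ref{E:POINCARE}) for $f$, whose right-hand side involves $\int f^{p-2}|\nabla f|^2$ over all of $\Sigma_t$, and this is not dominated by the good term $-\tfrac13p^2\int f_k^{p-2}|\nabla f|^2$ (since $f^{p-2}\geq f_k^{p-2}$); conversely, applying the inequality ``to $f_k$'' as you propose controls $\int f_k^p\tilde H^2$, which is \emph{smaller} than the quantity $\int_{A(k,t)}f^p\tilde H^2$ you need to absorb. The standard resolution (as in \cite[Section 5]{Huisken84}) is a two-stage argument: first obtain uniform $L^p$ bounds on $f$ itself (no truncation, so the Poincar\'{e} and evolution inequalities involve the same integrands and absorption is legitimate), and only then treat the truncated quantity, estimating $\int_{A(k,t)}\tilde H^2f^p$ by H\"{o}lder against $|A(k,t)|$ raised to a positive power using the $L^{pq}$ bounds with a slightly larger $\sigma$. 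Your sketch conflates these two stages; spelling them out is necessary for the constants and exponents to track as claimed.
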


The main result of this section is the following:

\begin{theorem}
\label{T:pinching-A-tilde}
Under the assumption of (\ref{E:lower-pinch}) and (\ref{E:lower-pinch-tilde}), there exist constants $\tilde{C}_0 < \infty$ and $\sigma>0$, both depending only on $\Sigma_0$ and $S$ such that for all $t \in [0,T)$, we have the estimate
\begin{equation}
\label{E:pinching-A-tilde}
|\tilde{A}|^2 -\frac{1}{2} \tilde{H}^2 \leq \tilde{C}_0 \tilde{H}^{2-\sigma}.
\end{equation}
\end{theorem}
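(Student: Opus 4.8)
The plan is to apply the Stampacchia iteration scheme of Theorem~\ref{T:Stampacchia} to the function
\[ f_\alpha := \frac{|\tilde{A}|^2 - \frac{1}{2}\tilde{H}^2}{\tilde{H}^2}, \qquad f := f_\alpha \tilde{H}^\sigma = \frac{|\tilde{A}|^2 - \frac{1}{2}\tilde{H}^2}{\tilde{H}^{2-\sigma}}, \]
with $\tilde{G} := |\nabla \tilde{H}|$ (or a convenient majorant of it). The pinching~(\ref{E:lower-pinch-tilde}) guarantees $\tilde{H} > 0$ and, together with~(\ref{E:A-Atilde-bound}), that $f_\alpha$ is a bounded nonnegative quantity, so $f$ is well-defined and the hypotheses $H = O(\tilde{H})$, $\nabla \tilde{H} = O(\tilde{G})$ of~(\ref{A:Stamp-bound}) hold trivially. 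The conclusion~(\ref{E:pinching-A-tilde}) is exactly the assertion that $f$ is uniformly bounded on $\Sigma \times [0,T)$, so once the two structural inequalities~(\ref{E:POINCARE}) and~(\ref{E:EVOLUTION}) are verified the theorem follows immediately.

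First I would derive the evolution inequality~(\ref{E:EVOLUTION}). Using Proposition~\ref{P:Atilde-evolution}, in particular~(\ref{E:pinch-evolve}) together with~(\ref{E:Htilde-evolve}), one computes $(\partial_t - \Delta) f_\alpha$; the leading-order terms are the classical Huisken-type terms, and all the perturbation contributions are controlled by $O(1 + |A|^3) = O(1 + \tilde{H}^3)$ via~(\ref{E:A-Atilde-bound})--(\ref{E:nablaA-Atilde-bound}). The key analytic point, as in Huisken's original argument and in~\cite{Edelen16}, is the good negative gradient term: on a uniformly pinched convex surface one has the Kato-type improvement $|\nabla \tilde{A}|^2 - \frac{1}{2}|\nabla \tilde{H}|^2 \geq \varepsilon_0 |\nabla \tilde{H}|^2$ for some $\varepsilon_0 = \varepsilon_0(\Sigma_0, S) > 0$ (coming from~(\ref{E:lower-pinch-tilde}) and the divergence/Codazzi identities), which produces the $-\frac{p}{c}\int \tilde{G}^2 \tilde{H}^{\sigma - 2} f^{p-1}_k$ term after multiplying by $\tilde{H}^\sigma$, integrating $f^{p-1}_k$ against the evolution of $f$, and integrating by parts. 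The $cp\sigma \int_{A(k,t)} \tilde{H}^2 f^p$ term is the price of the factor $\tilde{H}^\sigma$ (it arises from $(\partial_t - \Delta)\tilde{H}^\sigma$ and is proportional to $\sigma$), and the lower-order error terms from $P^\Sigma$ feed into $C\int_{A(k,t)} f^p + C|A(k)|$. The boundary term $cp \int_{\partial\Sigma_t} f^{p-1}_k \tilde{H}^\sigma$ is what survives the integration by parts of the Laplacian; here the crucial input is Lemma~\ref{L:N-htilde}, which gives $N\tilde{H} = h^S_{\nu\nu}\tilde{H} \geq 0$ by convexity of $S$, together with the sign of $N f_\alpha$ — this is precisely where the perturbed second fundamental form earns its keep, since $\tilde{h}_{12} = 0$ on $\partial\Sigma$ removes the uncontrollable $\nabla_1 h_{12}$ term.

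For the Poincar\'e-type inequality~(\ref{E:POINCARE}), I would test the Simons-type identity for $|\tilde{A}|^2 - \frac{1}{2}\tilde{H}^2$ (or rather a Michael--Simon / integration-by-parts argument applied to $f^{p} \tilde{H}^2$) and use divergence theorem, carefully tracking the boundary contribution $\int_{\partial\Sigma_t} f^{p-1}\tilde{H}^\sigma$ which again must have the right sign or be absorbable by the $N\tilde{H} = h^S_{\nu\nu}\tilde{H} \ge 0$ boundary identity. The extra terms $C\big(\int_{\Sigma_t} f^p + |\Sigma_t|\big)$ allowed in Theorem~\ref{T:Stampacchia} absorb all the $P^\Sigma$ errors and the bounded-geometry constants. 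The main obstacle, as flagged in the paper's introduction, is precisely the boundary analysis: one must show that all boundary integrals produced by integrating by parts have a favorable sign (using $h^S_{\nu\nu} \ge 0$, $h^S_{22} \ge 0$ from convexity of $S$, the boundary identities of Lemma~\ref{L:A-tilde-1} and Lemma~\ref{L:N-htilde}, and the first-order vanishing $\nabla_1 P^\Sigma_{11} = \nabla_1 P^\Sigma_{22} = 0$ of Lemma~\ref{L:P-def-1}) or can be absorbed into the $cp\int_{\partial\Sigma_t} f^{p-1}_k \tilde{H}^\sigma$ slack term; the dimension-two structure is essential so that the mixed terms recombine into $H$ with its preserved positive lower bound (the Claim in Theorem~\ref{T:tilde-convexity-I}, $H \ge 11\tilde{D}/6$). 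Once~(\ref{E:POINCARE}) and~(\ref{E:EVOLUTION}) are in hand, Theorem~\ref{T:Stampacchia} gives the uniform bound on $f$, and~(\ref{E:pinching-A-tilde}) follows by multiplying through by $\tilde{H}^{2-\sigma}$.
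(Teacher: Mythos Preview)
Your plan is essentially the paper's: apply Theorem~\ref{T:Stampacchia} to exactly this $f$, verifying (\ref{E:POINCARE}) via a perturbed Simons identity (Lemma~\ref{L:POINCARE}) and (\ref{E:EVOLUTION}) via the evolution of $f$ (Lemma~\ref{L:EVOLUTION}), with boundary terms controlled by (\ref{E:N-bounds-1})--(\ref{E:f-bound-1}).

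One technical point where your sketch diverges from the paper: the good negative gradient term in (\ref{E:EVOLUTION}) is \emph{not} obtained from the inequality $|\nabla \tilde{A}|^2 - \tfrac{1}{2}|\nabla \tilde{H}|^2 \geq \varepsilon_0|\nabla \tilde{H}|^2$ you cite. Instead the paper computes $(\partial_t - \Delta)f$ directly (as in \cite[Lemma~5.2]{Huisken84}) so that the gradient terms organize into $-\tfrac{2}{\tilde{H}^{4-\sigma}}|\tilde{H}\nabla\tilde{A} - \tilde{A}\nabla\tilde{H}|^2$, and then bounds this below via Lemma~\ref{L:squeeze}, which is where the pinching (\ref{E:lower-pinch-tilde}) actually enters (through the skew-symmetric part $|\nabla_i\tilde{H}\cdot\tilde{h}_{kl} - \nabla_k\tilde{H}\cdot\tilde{h}_{il}|^2 \geq 2\tilde{\epsilon}^2\tilde{H}^2|\nabla\tilde{H}|^2$), with an extra $O(\tilde{H}^4)$ error from the perturbed Codazzi defect. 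Your route via (\ref{E:pinch-evolve}) would leave uncontrolled cross terms $\nabla(|\tilde{A}|^2 - \tfrac{1}{2}\tilde{H}^2)\cdot\nabla\tilde{H}^{-(2-\sigma)}$; the quotient-structure term absorbs these automatically. Similarly, for (\ref{E:POINCARE}) the key positive term $4\epsilon^2 f\tilde{H}^2$ comes not from a Michael--Simon inequality but from the algebraic estimate $Z \geq 2\epsilon^2 H^2(|A|^2 - \tfrac{1}{2}H^2)$ of \cite[Lemma~2.3]{Huisken84} applied inside the perturbed Simons identity (\ref{E:Simons}); this is where (\ref{E:lower-pinch}) is used.
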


Our idea is to apply Theorem \ref{T:Stampacchia} to show that the non-negative function
\[
f:=\frac{|\ti A|^2-\frac12\ti H^2}{\ti  H^{2-\sigma}}
\]
is uniformly bounded in $\Sigma \times [0,T)$ for some suitable choice of the parameter $\sigma>0$. We first observe that from Lemma \ref{L:A-tilde-1}, \ref{L:N-htilde} and (\ref{E:N-|A-tilde|-ineq}) that along $\partial \Sigma$, we have
\begin{equation}
\label{E:N-bounds-1}
N\tilde{H} = h^S_{\nu\nu} \tilde{H},\qquad |N|\tilde{A}|| \leq C(K,L_1) \tilde{H},
\end{equation}
where we have also used $|\tilde{A}| \leq \tilde{H}$ since $\tilde{A}>0$ for all time. From (\ref{E:N-bounds-1}) we obtain
\begin{equation}
\label{E:f-bound-1}
Nf =O(\tilde{H}^\sigma).
\end{equation}
Moreover, it follows from the definition of $f$ that on $\Sigma \times [0,T)$, we have
\begin{equation}
\label{E:f-bound-2}
0 \leq f \leq \tilde{H}^\sigma.
\end{equation}

We first show that $f$ satisfies the ``Poincar\'{e}-like'' inequality (\ref{E:POINCARE}) with $\tilde{G} = |\nabla \tilde{H}|$ such that (\ref{A:Stamp-bound}) is clearly satisfied.

\begin{lemma}
\label{L:POINCARE}
There exists a constants $c=c(S,\Sigma_0)>0$ and $C=C(S,\Sigma_0,\sigma,p,\beta)>0$ such that for any $\beta>0$, $0<\sigma<1/2$ and $p > 4$, we have for all $t \in [0,T)$,
\begin{align*}
\frac{1}{c}\int_{\Sigma_t} f^p\ti H^2 \le &p(1+\beta^{-1})\int_{\Sigma_t} |\nabla f|^2f^{p-2} + (1+p \beta)\int_{\Sigma_t} \frac{|\nabla \ti  H|^2}{\ti H^{2-\sigma}}f^{p-1}\\
&+\int_{\partial \Sigma_t}f^{p-1}\ti H^\sigma +C\left( \int_{\Sigma_t} f^p +|\Sigma_t|\right).
\end{align*}
\end{lemma}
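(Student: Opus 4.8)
The plan is to establish the ``Poincar\'{e}-like'' inequality \eqref{E:POINCARE} for $f = (|\ti A|^2 - \tfrac12 \ti H^2)/\ti H^{2-\sigma}$ by following the strategy of \cite[Lemma 5.5]{Stahl96b} and \cite[Lemma 7.1]{Edelen16}, adapted to the perturbed second fundamental form. The starting point is to write $f^{p} \ti H^2 = f^{p-1} \cdot (f \ti H^2)$ and observe that $f \ti H^2 = (|\ti A|^2 - \tfrac12 \ti H^2) \ti H^\sigma$; we then multiply the identity $|\ti A|^2 - \tfrac12 \ti H^2 = \Div(\text{something})$ — more precisely use the Simons-type structure together with the divergence of the vector field $\ti H^\sigma \nabla \ti H$ (or the contracted second Bousband) — to rewrite $\int_{\Sigma_t} f^{p-1} (|\ti A|^2 - \tfrac12 \ti H^2)\ti H^\sigma$ as a divergence integral. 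Integrating by parts produces the boundary term $\int_{\partial \Sigma_t} f^{p-1} \ti H^\sigma$ (after using \eqref{E:N-bounds-1} to control $N\ti H$ and $N|\ti A|$), the gradient term $\int_{\Sigma_t} f^{p-2}|\nabla f|^2$ with the combinatorial factor $p$, and a term of the form $\int_{\Sigma_t} f^{p-1} \ti H^{\sigma-2}|\nabla \ti H|^2$. The Cauchy--Schwarz / Peter--Paul splitting with parameter $\beta$ is where the factors $(1+\beta^{-1})$ and $(1+p\beta)$ enter.

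Concretely, I would proceed in the following steps. First, compute the divergence of $f^{p-1} \ti H^\sigma \nabla \ti H$ (or the appropriate contraction of $\ti A$ with $\nabla f$), using $\nabla \ti A = \nabla A + O(1+|A|)$ from \eqref{E:nablaA-Atilde-bound} and the Codazzi equation, accepting error terms of controlled polynomial order in $|A| = O(\ti H)$. Second, integrate this divergence over $\Sigma_t$ and apply the divergence theorem; the boundary integrand is estimated by \eqref{E:f-bound-1} and \eqref{E:N-bounds-1}, yielding a contribution bounded by $\int_{\partial \Sigma_t} f^{p-1}\ti H^\sigma$ plus terms absorbed elsewhere. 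Third, expand $\nabla f$ in terms of $\nabla \ti A$ and $\nabla \ti H$ — here the lower pinching \eqref{E:lower-pinch-tilde} together with \eqref{E:lower-pinch} is used to bound $|\nabla \ti H|$ by $C|\nabla \ti A|$ up to lower order, so that the ``bad'' gradient terms can be traded for $|\nabla f|^2$ terms and $\ti H^{\sigma-2}|\nabla\ti H|^2 f^{p-1}$ terms. Fourth, collect the error terms: each involves a strictly lower power of $\ti H$ than the main term $f^p \ti H^2$, so for $p$ large enough they are dominated by $C(\int_{\Sigma_t} f^p + |\Sigma_t|)$, using $0 \le f \le \ti H^\sigma$ from \eqref{E:f-bound-2} and Young's inequality; the area term $|\Sigma_t|$ is precisely the extra slack the statement of Theorem \ref{T:Stampacchia} permits. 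Finally, track the dependence of all constants to confirm that $c$ depends only on $S,\Sigma_0$ and $C$ additionally on $\sigma,p,\beta$.

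The main obstacle I anticipate is bookkeeping the error terms generated by the perturbation tensor $P^\Sigma$ and its derivatives. Unlike the umbilic case in \cite{Stahl96b}, here $\ti A \ne A$ in the interior, so every integration by parts and every commutation of derivatives picks up contributions of the form $\int_{\Sigma_t} (\text{polynomial in }|A|,|\nabla A|) \cdot (\text{polynomial in }P^\Sigma, \nabla P^\Sigma, \nabla^2 P^\Sigma) \cdot f^{p-j}\ti H^{\sigma-k}$. Using the bounds \eqref{E:P-bounds} (i.e. $P^\Sigma = O(1)$, $\nabla P^\Sigma = O(1+|A|)$, $\nabla^2 P^\Sigma = O(1+|A|^2 + |\nabla A|)$) and convexity $|A| \le H = O(\ti H)$, each such term carries at least one fewer power of $\ti H$ than the leading term, so it is harmless for large $p$ — but verifying this uniformly, and ensuring no term secretly scales like $\ti H^2 f^p$ with an uncontrolled coefficient, requires care. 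The cleanest route is to mimic \cite[Lemma 7.1]{Edelen16} line by line, replacing $A$ by $\ti A$ throughout and invoking Proposition \ref{P:P-estimate} whenever a $P^\Sigma$-term appears, then checking that the extra $|\Sigma_t|$ and $\int f^p$ slack absorbs everything new.
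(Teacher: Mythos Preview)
Your proposal is too vague at exactly the point where the real work happens, and the mechanism you sketch is not the one that produces the inequality. You propose to write $|\ti A|^2 - \tfrac12 \ti H^2 = \Div(\text{something})$ and integrate by parts; but there is no such divergence identity, and the divergence of $f^{p-1}\ti H^\sigma \nabla \ti H$ does not yield a term controlling $\int f^p \ti H^2$ with a coefficient depending only on $S,\Sigma_0$. In the paper (following \cite[\S5]{Huisken84}), the source of the good term $\int f^p \ti H^2$ is completely different: one computes $\Delta f$ via a Simons'-type identity for $\ti A$, which produces the algebraic quantity $Z = H h_{ik}h^k_{\phantom{k}\ell}h^{\ell i} - |A|^4$, and then the pinching hypothesis \eqref{E:lower-pinch} enters through the algebraic inequality $Z \ge 2\epsilon^2 H^2(|A|^2 - \tfrac12 H^2)$ of \cite[Lemma 2.3]{Huisken84}. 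This yields $\Delta f \ge 4\epsilon^2 f\ti H^2 + (\text{other terms})$, and \emph{that} is what one multiplies by $f^{p-1}$ and integrates. The constant $c$ in the lemma is essentially $\epsilon^{-2}$.

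Your proposal also misidentifies the role of the pinching: you say it is used ``to bound $|\nabla \ti H|$ by $C|\nabla \ti A|$ up to lower order,'' but that is not where \eqref{E:lower-pinch} enters at all --- it enters only through the pointwise $Z$-estimate above. The remaining structure (integration by parts of $\int f^{p-1}\Delta f$, $\int \frac{f^p}{\ti H}\Delta\ti H$, and the $\langle \ti h^0_{ij},\nabla_i\nabla_j H\rangle$ term, with boundary terms controlled by \eqref{E:N-bounds-1}, \eqref{E:f-bound-1} and the crucial decomposition of $\ti A$ along $\partial\Sigma$ from Lemma \ref{L:A-tilde-1}) is where the $\beta$-splitting and the factors $p(1+\beta^{-1})$, $(1+p\beta)$ actually arise. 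Your error-term bookkeeping remarks are fine, and the Young-type absorption \eqref{E:Young} is indeed how the $|\Sigma_t|$ term appears, but without the $Z$-mechanism you have no left-hand side to absorb into.
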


\begin{proof}
We start by observing 
\[ |\ti A|^2=|A|^2+|P^\Sigma|^2+2\langle A, P^\Sigma \rangle,\]
\[ \ti H^2=H^2+V^2+2HV \]
where $V:=\Tr_g P^\Sigma$.
By a direct computation exactly as in \cite[Lemma 5.2 and 5.4]{Huisken84}, we obtain
\begin{equation}
\label{E:nabla-f}
\nabla f = \frac{1}{\tilde{H}^{2-\sigma}} \nabla|\tilde{A}|^2 + \left( \frac{\sigma}{\tilde{H}} f - \frac{2 |\tilde{A}|^2}{\tilde{H}^{3-\sigma}} \right) \nabla \tilde{H}
\end{equation}

\begin{align}
\label{E:Delta-f}
\Delta f=&\frac{\ti H\Delta |\ti A|^2-(2-\sigma)|\ti A|^2\Delta\ti H}{\ti H^{3-\sigma}}-\frac\sigma {2\ti H^{1-\sigma}}\Delta \ti H-\frac{2(2-\sigma)}{\ti H^{3-\sigma}} \langle \nabla |\ti A|^2, \nabla \ti H \rangle\\
&+(2-\sigma)(3-\sigma)\frac{|\ti A|^2}{\ti H^{4-\sigma}}|\nabla \ti H|^2 +\frac{\sigma(1-\sigma)}{2\ti H^{2-\sigma}}|\nabla \ti H|^2. \nonumber \\
=& \frac{\Delta |\ti A|^2 -\tilde{H} \Delta \tilde{H} -2|\nabla \tilde{A}|^2}{\ti H^{2-\sigma}} -\frac{2-\sigma}{\ti H} f \Delta \tilde{H} +\frac{2}{\tilde{H}^{4-\sigma}} |\tilde{H} \nabla\tilde{A} - \tilde{A} \nabla \tilde{H}|^2   \nonumber \\
&   +\frac{\sigma(1-\sigma)}{\tilde{H}^2} f |\nabla \tilde{H}|^2-\frac{2(1-\sigma)}{\tilde{H}} \langle \nabla \tilde{H},\nabla f\rangle.   \nonumber
\end{align}
Since $\Sigma_t$ is convex for all time, we have $|A| \leq H$. From the proof of Proposition \ref{P:P-estimate} and (\ref{E:P-C^2}), we obtain
\begin{equation}
\label{E:Delta-H-tilde}
\Delta \ti H=\Delta H+O(1+|\nabla H|+H^2).
\end{equation}
To compute the term $\Delta |\tilde{A}|^2$, we apply the standard Simons' identity to obtain a Simons'-type identity for the perturbed second fundamental form:
\begin{align*}
\frac{1}{2} \Delta |\ti A|^2=&\frac{1}{2}\Delta |A|^2+\frac{1}{2}\Delta |P^\Sigma|^2+\Delta\langle A, P^\Sigma \rangle \\
=&(\langle h_{ij},\nabla_i\nabla_jH \rangle+|\nabla A|^2+Z) + \langle P^\Sigma,\Delta P^\Sigma \rangle + |\nabla P^\Sigma|^2\\
& \langle \nabla_i\nabla_jH+Hh_{i\ell}h^{\ell}_{\phantom{\ell}j}-|A|^2h_{ij}, P^\Sigma_{ij} \rangle +\langle A,\Delta P^\Sigma \rangle +2 \langle \nabla A,\nabla P^\Sigma \rangle\\
=&\langle \tilde{h}_{ij},\nabla_i\nabla_jH \rangle+|\nabla \tilde{A}|^2+Z + \langle P^\Sigma,\Delta P^\Sigma \rangle+\langle A,\Delta P^\Sigma \rangle\\
& +\langle Hh_{i\ell}h^{\ell}_{\phantom{\ell}j}-|A|^2h_{ij}, P^\Sigma_{ij} \rangle \\
=&\langle \tilde{h}_{ij},\nabla_i\nabla_jH \rangle+|\nabla \ti A|^2+ Z+O(1+H^3+|\nabla H|+H|\nabla H|).
\end{align*}
where $Z:=Hh_{ik}h^k_{\phantom{k}\ell}h^{\ell i}-|A|^4$ as in \cite[Section 2]{Huisken84} and we have used (\ref{E:P-C^2}), the proof of Proposition \ref{P:P-estimate} and $|A| \leq H$ in the last equality.
Observe that $H=\tilde{H}+O(1)$ by (\ref{E:A-Atilde-bound}) and $\nabla H=\nabla \tilde{H}+O(1+H)$ by (\ref{E:nablaA-Atilde-bound}). Moreover, we can assume $\tilde{H} \geq 1$ since $\Sigma_0$ is sufficiently convex and any lower bound of $H$ is preserved throughout the flow. Therefore, we obtain 
\begin{equation}
\label{E:Simons}
\frac{1}{2} \Delta |\ti A|^2=\ti h^{ij}\nabla_i\nabla_jH+|\nabla \ti A|^2+ Z+O(\tilde{H}^3+ \tilde{H}|\nabla \tilde{H}|).
\end{equation}
Putting (\ref{E:Delta-H-tilde}) and (\ref{E:Simons}) back into the first term in the last equation of (\ref{E:Delta-f}) and proceeding as in the proof of \cite[Lemma 5.4]{Huisken84}, we have
\begin{align*}
\Delta f=&\frac{2}{\ti H^{2-\sigma}}\langle \ti h^0_{ij},\nabla_i\nabla_jH \rangle +\frac{2Z}{\ti H^{2-\sigma}}+\frac{2}{\tilde{H}^{4-\sigma}} |\tilde{H} \nabla\tilde{A} - \tilde{A} \nabla \tilde{H}|^2 \\
&-\frac{2-\sigma}{\ti H}f\Delta \ti H +\frac{\sigma(1-\sigma)}{\tilde{H}^2} f |\nabla \tilde{H}|^2-\frac{2(1-\sigma)}{\tilde{H}} \langle \nabla \tilde{H},\nabla f\rangle \\
&+O\left(\ti H^{1+\sigma}+\frac1{\ti H^{1-\sigma}}|\nabla \ti H|\right),
\end{align*}
where $\tilde h^0_{ij}$ denotes the trace free part of $\tilde A$. Dropping two non-negative terms, we have
\begin{align*}
\Delta f\ge&\frac{2}{\ti H^{2-\sigma}}\langle \ti h^0_{ij},\nabla_i\nabla_jH \rangle +\frac{2Z}{\ti H^{2-\sigma}} -\frac{2-\sigma}{\ti H}f\Delta \ti H -\frac{2(1-\sigma)}{\tilde{H}} \langle \nabla \tilde{H},\nabla f\rangle \\
& +O\left(\ti H^{1+\sigma}+\frac1{\ti H^{1-\sigma}}|\nabla \ti H|\right).
\end{align*}
Since $\Sigma_t$ remains convex and (\ref{E:lower-pinch}) holds, we can apply the estimate in \cite[Lemma 2.3]{Huisken84} and use the bounds (\ref{E:A-Atilde-bound}) to give the inequality
\[ Z\ge 2\epsilon^2 H^2 \left(|A|^2-\frac{1}{2} H^2 \right) = 2\epsilon^2 \tilde{H}^2 \left(|\tilde{A}|^2-\frac{1}{2} \tilde{H}^2 \right) +O(\tilde{H}^3).\]
Therefore, we obtain the following differential inequality
\begin{align}
\label{E:Delta-f-ineq}
\Delta f\ge&\frac{2}{\ti H^{2-\sigma}}\langle \ti h^0_{ij},\nabla_i\nabla_jH \rangle +4 \epsilon^2 f \tilde{H}^2 -\frac{2-\sigma}{\ti H}f\Delta \ti H -\frac{2(1-\sigma)}{\tilde{H}} \langle \nabla \tilde{H},\nabla f\rangle \\
&-C(S)\left(\ti H^{1+\sigma}+\frac1{\ti H^{1-\sigma}}|\nabla \ti H|\right). \nonumber
\end{align}
We will multiply the inequality by $f^{p-1}$ and integrate by parts as in \cite[P.248]{Huisken84}. Since there are new boundary terms showing up and errors terms to be absorbed, let us look at the terms that are integrated by part more carefully. Since we have the bounds (\ref{E:N-bounds-1}), (\ref{E:f-bound-1}), (\ref{E:f-bound-2}), together with Peter-Paul inequality, we have for any $\beta>0$,
\begin{align*}
\int_\Sigma f^{p-1} \Delta f =& -(p-1) \int_\Sigma f^{p-2} |\nabla f|^2 +\int_{\partial \Sigma} f^{p-1} N(f) \\
\leq & -(p-1) \int_\Sigma f^{p-2} |\nabla f|^2 + C(S)\int_{\partial \Sigma} f^{p-1} \tilde{H}^\sigma,
\end{align*}
\begin{align*} 
\int_\Sigma \frac{1}{\tilde{H}} & f^p \Delta \tilde{H} =\int_\Sigma \frac{1}{\tilde{H}^2} f^p |\nabla \tilde{H}|^2 -p \int_\Sigma \frac{1}{\tilde{H}} f^{p-1} \langle \nabla \tilde{H},\nabla f \rangle +\int_{\partial \Sigma} \frac{1}{\tilde{H}} f^p N(\tilde{H}) \\
\leq &(1+p \beta ) \int_\Sigma \frac{1}{\tilde{H}^{2-\sigma}} f^{p-1} |\nabla \tilde{H}|^2 + (4\beta)^{-1}p \int_\Sigma f^{p-2} |\nabla f|^2 +C(S)\int_{\partial \Sigma} f^{p-1} \tilde{H}^\sigma,
\end{align*}
By Cauchy-Schwarz, we have
\[
\int_\Sigma \frac{1}{\tilde{H}} f^{p-1} \langle \nabla \tilde{H},\nabla f \rangle \leq  \frac{1}{2} \int_\Sigma \frac{1}{\tilde{H}^{2-\sigma}} f^{p-1} |\nabla \tilde{H}|^2 + \frac{1}{2} \int_\Sigma f^{p-2} |\nabla f|^2.
\]
The term involving the $\tilde{h}^0_{ij}$ requires more work. Recall that $\|\tilde{h}^0_{ij}\|^2=f \tilde{H}^{2-\sigma} \leq \tilde{H}^2$ and from Codazzi equation $\nabla_i h^0_{ij} = \frac{1}{2} \nabla_j H$. We will need the crucial fact from Lemma \ref{L:A-tilde-1} that the (trace-free) perturbed second fundamental form decomposes along $\partial \Sigma$ to estimate the boundary term. Furthermore, using the uniform bound (\ref{E:nablaA-Atilde-bound}) and Peter-Paul inequality, we obtain for any $\beta>0$
\begin{align*}
&-2 \int_\Sigma \frac{1}{\tilde{H}^{2-\sigma}} f^{p-1} \langle \ti h^0_{ij},\nabla_i\nabla_jH \rangle \\
= & -2(2-\sigma) \int_\Sigma \frac{1}{\tilde{H}^{3-\sigma}} f^{p-1} \langle \tilde{h}^0_{ij}, \nabla_i \tilde{H} \nabla_j H\rangle +2 (p-1) \int_\Sigma \frac{1}{\tilde{H}^{2-\sigma}} f^{p-2} \langle \tilde{h}^0_{ij}, \nabla_i f \nabla_j H\rangle \\
& + \int_\Sigma \frac{1}{\tilde{H}^{2-\sigma}} f^{p-1} |\nabla H|^2 +\int_\Sigma f^{p-1} O\left(\frac{ |\nabla H|}{\tilde{H}^{1-\sigma}} \right)  -2\int_{\partial \Sigma} \frac{1}{\tilde{H}^{2-\sigma}} f^{p-1} \tilde{h}^0_{11} N(H)\\
\leq & 4 \int_\Sigma  \frac{1}{\tilde{H}^{2-\frac{\sigma}{2}}} f^{p-\frac{1}{2}} |\nabla \tilde{H}| |\nabla H| +2p \int_\Sigma \frac{1}{\tilde{H}^{1-\frac{\sigma}{2}}} f^{p-\frac{3}{2}} |\nabla f| |\nabla H|   \\
& + \int_\Sigma \frac{1}{\tilde{H}^{2-\sigma}} f^{p-1} |\nabla H|^2  +\int_\Sigma f^{p-1} O\left(\frac{|\nabla H| }{\tilde{H}^{1-\sigma}} \right) +C(S)\int_{\partial \Sigma} f^{p-1} \tilde{H}^\sigma\\
\leq & 4 \int_\Sigma  \frac{1}{\tilde{H}^{2-\frac{\sigma}{2}}} f^{p-\frac{1}{2}} |\nabla \tilde{H}|^2+2p \int_\Sigma \frac{1}{\tilde{H}^{1-\frac{\sigma}{2}}} f^{p-\frac{3}{2}} |\nabla f| |\nabla H|     \\
& + \int_\Sigma \frac{1}{\tilde{H}^{2-\sigma}} f^{p-1} |\nabla \tilde{H}|^2 +\int_\Sigma f^{p-1} O\left(\frac{|\nabla H|}{\tilde{H}^{1-\sigma}}  \right) +C(S)\int_{\partial \Sigma} f^{p-1} \tilde{H}^\sigma\\
\leq & \beta^{-1}p \int_\Sigma f^{p-2} |\nabla f|^2 + (\beta p+5) \int_\Sigma \frac{1}{\tilde{H}^{2-\sigma}} f^{p-1} |\nabla \tilde{H}|^2    \\
& +(\beta p+1)\int_\Sigma f^{p-1} O\left(\tilde{H}^\sigma+\frac{ |\nabla \tilde{H}|}{\tilde{H}^{1-\sigma}} \right) +C(S)\int_{\partial \Sigma} f^{p-1} \tilde{H}^\sigma
\end{align*}
where we have used the estimates $|\nabla H|=|\nabla \tilde{H}| +O(\tilde{H})$ and $|\nabla H|^2 =|\nabla \tilde{H}|^2 +O(\tilde{H} |\nabla \tilde{H}| + \tilde{H}^2)$. Putting all of these estimates back into (\ref{E:Delta-f-ineq}), since $p \geq 2$, we have for any $\beta>0$,
\begin{align}
\label{E:POINCARE-proof}
4\epsilon^2 \int_\Sigma f^p \tilde{H}^2 \leq & 2\beta^{-1}p \int_\Sigma f^{p-2} |\nabla f|^2 +(3\beta p+8) \int_\Sigma \frac{1}{\tilde{H}^{2-\sigma}} f^{p-1} |\nabla \tilde{H}|^2 \\
&+(\beta p+2)\int_\Sigma f^{p-1} O\left(\tilde{H}^{1+\sigma}+\frac{|\nabla \tilde{H}|}{\tilde{H}^{1-\sigma}}  \right) +C(S) \int_{\partial \Sigma} f^{p-1} \tilde{H}^\sigma. \nonumber
\end{align}
It remains to control the error term. Note that by \cite[Remark 3.2]{Edelen16}, for any arbitrary function $g \geq 0$ on $\Sigma=\Sigma_t$. If $r \in (0,2)$ and $q \in (0,p)$ with $rp/q<2$, then for any $\mu>0$, we have
\begin{equation}
\label{E:Young}
\int_\Sigma g^q \tilde{H}^r \leq \mu^{-1} \int_\Sigma g^p \tilde{H}^2 +C(\mu,r,p,q) \int_\Sigma g^p + |\Sigma_t|.
\end{equation}
Therefore, we have for any $\mu>0$, when $p> 2/(1-\sigma)$,
\[ \int_\Sigma f^{p-1} \tilde{H}^{1+\sigma} \leq \mu^{-1} \int_\Sigma f^p \tilde{H}^2+C(\mu,\sigma,p) \int_\Sigma f^p + |\Sigma_t| .\]
On the other hand, by Cauchy-Schwarz
\[ \int_\Sigma \frac{1}{\tilde{H}^{1-\sigma}} f^{p-1} |\nabla \tilde{H}| \leq \frac{1}{2} \int_\Sigma \frac{1}{\tilde{H}^{2-\sigma}} f^{p-1} |\nabla \tilde{H}|^2 + \frac{1}{2} \int_\Sigma f^{p-1} \tilde{H}^\sigma. \]
We can then estimate the error term to be
\begin{align*}
(\beta p+2)\int_\Sigma f^{p-1} & O\left(\tilde{H}^{1+\sigma}+\frac{1}{\tilde{H}^{1-\sigma}} |\nabla \tilde{H}| \right) \\
 \leq  & C(S,\beta,p) \mu^{-1} \int_\Sigma f^p \tilde{H}^2  +  C(S)(\beta p+2) \int_\Sigma \frac{1}{\tilde{H}^{2-\sigma}} f^{p-1} |\nabla \tilde{H}|^2 \\
&  +C(S,\mu,\sigma,p,\beta) \int_\Sigma f^p + C(S,p,\beta) |\Sigma_t|
\end{align*}
If we choose $\mu>0$ sufficiently large, depending only on $S$, $\beta$, $\epsilon$ and $p$, then we can absorb the first term to the left hand side of (\ref{E:POINCARE-proof}). This finishes the proof.
\end{proof}

Next, we want to establish an ``Evolution-like'' inequality (\ref{E:EVOLUTION}) for $f$. Before that, we prove a useful lemma in the same spirit as \cite[Lemma 2.3 (ii)]{Huisken84}. Note that we have a worse error term of order $H^4$ as opposed to the one of order $H^2$ in \cite[Lemma 2.3 (ii)]{Huisken84}. Nonetheless, this is still sufficient for our later purpose.

\begin{lemma}
\label{L:squeeze}
Under the assumption (\ref{E:lower-pinch-tilde}) and $\tilde{H}>0$ for all time, we have
\[
|\ti H \nabla \ti A-\ti A \nabla\ti H|^2\ge \frac14 \tilde{\epsilon}^2\ti H^2|\nabla \ti H|^2-\frac{C(S)}{4\tilde{\epsilon}^2} \ti H^4.
\]
\end{lemma}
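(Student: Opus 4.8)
The plan is to mimic the proof of \cite[Lemma 2.3 (ii)]{Huisken84}, working with the perturbed quantities and keeping careful track of the error terms coming from the perturbation tensor. First I would write $\tilde{A} = \tilde{h}_{ij}$ and decompose $\nabla_k \tilde{h}_{ij}$ into its pure-trace part and the rest: set $\tilde{E}_{ijk} := \nabla_k \tilde{h}_{ij} - \tfrac{1}{4}(g_{ij} \nabla_k \tilde{H} + g_{ik}\nabla_j \tilde{H} + g_{jk} \nabla_i \tilde{H})$, so that $\tilde{E}$ is totally trace-free in the appropriate sense (or, more simply, follow Huisken and work with $\nabla \mathring{\tilde{A}}$ directly). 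The key algebraic point is that, because of the lower pinching estimate \eqref{E:lower-pinch-tilde}, the tensor $\tilde{H}\nabla \tilde{A} - \tilde{A}\nabla\tilde{H}$ controls $\tilde{H}\nabla\tilde{H}$ from below up to the size of the trace-free gradient. Concretely, one shows pointwise that $|\tilde{H}\nabla\tilde{A} - \tilde{A}\nabla\tilde{H}|^2 \geq \tilde{\epsilon}^2 \tilde{H}^2 |\nabla\tilde{H}|^2$ modulo lower-order terms — this is exactly the content of Huisken's lemma, where the eigenvalue pinching $\tilde{h}_{ij} \geq \tilde{\epsilon}\tilde{H} g_{ij}$ plays the role of his $\epsilon H g_{ij} \leq h_{ij}$.

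The main subtlety, and where the $H^4$ error term enters, is that $\tilde{A}$ does \emph{not} satisfy the Codazzi equation exactly; rather $\nabla_i \tilde{h}_{jk} = \nabla_i h_{jk} + \nabla_i P^\Sigma_{jk}$, and by \eqref{E:nablaA-Atilde-bound} the correction is $O(1 + |A|) = O(\tilde{H})$ (using $|A|\le H$ and $H = \tilde{H}+O(1)$, $\tilde{H}\ge 1$). Hence in any place where Huisken uses Codazzi symmetry of $\nabla h$, one picks up an error term of size $O(\tilde{H})$ in $\nabla\tilde{A}$; when this is multiplied against $\tilde{H}\nabla\tilde{H}$ in expanding $|\tilde{H}\nabla\tilde{A}-\tilde{A}\nabla\tilde{H}|^2$, or when $\tilde{A}$ itself (which is $O(\tilde{H})$) hits such an error, one produces terms of order $\tilde{H}^3|\nabla\tilde{H}|$ and $\tilde{H}^4$. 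Absorbing $\tilde{H}^3|\nabla\tilde{H}|$ via Peter--Paul, $\tilde{H}^3|\nabla\tilde{H}| \leq \tfrac{\tilde\epsilon^2}{4}\tilde{H}^2|\nabla\tilde{H}|^2 + \tfrac{C}{\tilde\epsilon^2}\tilde{H}^4$, and combining with the genuine $\tilde{H}^4$ errors, one lands on the stated inequality $|\tilde{H}\nabla\tilde{A} - \tilde{A}\nabla\tilde{H}|^2 \geq \tfrac14 \tilde\epsilon^2 \tilde{H}^2|\nabla\tilde{H}|^2 - \tfrac{C(S)}{4\tilde\epsilon^2}\tilde{H}^4$, with the constant $C(S)$ coming from the $C^2$-bound \eqref{E:P-C^2} on $P$.

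The step I expect to be the main obstacle is the clean bookkeeping of the Codazzi-violation terms: one must verify that every failure of the algebraic identities Huisken uses is genuinely of size $O(\tilde{H})$ in $\nabla\tilde{A}$ (not larger), so that after multiplying by at most two factors of $\tilde{A} \sim \tilde{H}$ the worst error is $O(\tilde{H}^4)$ and no uncontrolled $|\nabla\tilde{H}|^2$-type term survives without the $\tilde\epsilon^2$ coefficient. A convenient way to organize this is to first prove the inequality for the genuine second fundamental form using the true Codazzi equation and the pinching $h_{ij} \geq \epsilon H g_{ij}$ from \eqref{E:lower-pinch} — obtaining $|H\nabla A - A\nabla H|^2 \geq \epsilon^2 H^2 |\nabla H|^2$ exactly — and then transfer to the perturbed quantities using $\tilde{A} = A + O(1)$, $\nabla\tilde{A} = \nabla A + O(1+H)$, $\tilde{H} = H + O(1)$, $\nabla\tilde{H} = \nabla H + O(1+H)$ together with $\tilde{H} \geq 1$, handling the resulting cross terms by Peter--Paul as above. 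This keeps the nontrivial geometry (Codazzi) on the side where it holds exactly and reduces the perturbation step to routine estimates.
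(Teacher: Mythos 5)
Your primary plan---run Huisken's Lemma 2.3(ii) argument directly on the perturbed quantities, observing that the only new ingredient is the Codazzi defect $\nabla_i \ti h_{kl}-\nabla_k \ti h_{il}=\nabla_i P^\Sigma_{kl}-\nabla_k P^\Sigma_{il}=O(\ti H)$ and absorbing the resulting $\ti H^3|\nabla \ti H|$ cross term by Peter--Paul into $\tfrac14\tilde{\epsilon}^2\ti H^2|\nabla\ti H|^2+\tfrac{C(S)}{4\tilde{\epsilon}^2}\ti H^4$---is exactly the paper's proof. (The precise decomposition used there is into the parts of $\ti H\,\nabla_i\ti h_{kl}-\nabla_i\ti H\,\ti h_{kl}$ that are symmetric and antisymmetric in $i,k$, not the trace/trace-free splitting of your $\tilde E_{ijk}$; one discards the symmetric part and bounds the antisymmetric part from below via \eqref{E:lower-pinch-tilde} as in Huisken.)

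The ``convenient way to organize this'' that you propose at the end, however, does not work: proving $|H\nabla A-A\nabla H|^2\ge \epsilon^2H^2|\nabla H|^2$ first and then transferring to the perturbed quantities fails because the difference $\ti H\nabla\ti A-\ti A\nabla\ti H-(H\nabla A-A\nabla H)$ contains the term $(\Tr_g P^\Sigma)\,\nabla A-P^\Sigma\,\nabla H$, which is only $O(|\nabla A|+|\nabla H|)$: an uncontrolled full gradient of $A$ with an $O(1)$ coefficient, not an $O(\ti H^2)$ or $O(\ti H|\nabla\ti H|)$ quantity. Expanding the square then produces cross terms of size $H|\nabla A|^2$, which cannot be absorbed by $\tilde{\epsilon}^2\ti H^2|\nabla\ti H|^2$ nor dominated by $\ti H^4$. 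The whole point of restricting to the antisymmetric (Codazzi) combination \emph{before} estimating is that $\nabla\ti A$ then enters only through the Codazzi defect $\nabla_i P^\Sigma_{kl}-\nabla_k P^\Sigma_{il}=O(\ti H)$, so no $|\nabla A|$ survives; the transfer-from-$A$ shortcut destroys this cancellation. Stick with your first plan.
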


\begin{proof}
Decomposing into symmetric and skew-symmetric parts with respect to $i$ and $k$, we have
\begin{align*}
 \nabla_i\ti h_{kl} \cdot \ti H -  \nabla_i\ti  H \cdot \ti h_{kl}=\frac{1}{2} \tilde{E}_{ikl}+ \frac{1}{2} \tilde{F}_{ikl},
\end{align*}
where 
\begin{align*}
\tilde{E}_{ikl}=& (\nabla_i \tilde{h}_{kl}+\nabla_k \tilde{h}_{il}) \cdot \ti H - (\nabla_i \tilde{H} \cdot \tilde{h}_{kl} +\nabla_k \tilde{H} \cdot \tilde{h}_{il}),\\
\tilde{F}_{ikl}=&(\nabla_i \tilde{h}_{kl}-\nabla_k \tilde{h}_{il}) \cdot \ti H-  (\nabla_i \tilde{H} \cdot \tilde{h}_{kl} -\nabla_k \tilde{H} \cdot \tilde{h}_{il}).
\end{align*}
Keeping only the skew-symmetric component $\tilde{F}_{ikl}$ and we have
\begin{align*}
|\nabla_i\ti h_{kl} \cdot \ti H -  \nabla_i\ti  H \cdot \ti h_{kl}|^2 \geq & \frac{1}{4}|(\nabla_i \tilde{h}_{kl}-\nabla_k \tilde{h}_{il}) \cdot \ti H-  (\nabla_i \tilde{H} \cdot \tilde{h}_{kl} -\nabla_k \tilde{H} \cdot \tilde{h}_{il})|^2 \\
\geq & \frac{1}{4} |\nabla_i \tilde{H} \cdot \tilde{h}_{kl} -\nabla_k \tilde{H} \cdot \tilde{h}_{il} |^2  - \frac{1}{2}\tilde{H}  |\nabla \tilde{H}| |\tilde{A}| |\nabla_i P^\Sigma_{kl}-\nabla_k P^\Sigma_{il}|
\end{align*}
Arguing as in \cite[Lemma 2.3(ii)]{Huisken84} and using (\ref{E:lower-pinch-tilde}), we have
\[  \frac{1}{4} |\nabla_i \tilde{H} \cdot \tilde{h}_{kl} -\nabla_k \tilde{H} \cdot \tilde{h}_{il} |^2  \geq  \frac{1}{2} \tilde{\epsilon}^2 \tilde{H}^2 |\nabla \tilde{H}|^2.\]
To estimate the second term, we have the following perturbed Codazzi equation from the proof of Proposition \ref{P:P-estimate},
\[ 
\nabla_i \tilde{h}_{kl}-\nabla_k \tilde{h}_{il} = \nabla_i P^\Sigma_{kl} -\nabla_k P^\Sigma_{il} = O(\tilde{H}).
\]
Hence, by $|\tilde{A}| \leq \tilde{H}$ and Peter-Paul inequality, we have
\[ \frac{1}{2} \tilde{H} |\nabla \tilde{H}| |\tilde{A}| |\nabla_i P^\Sigma_{kl}-\nabla_k P^\Sigma_{il}| \leq \frac{C(S)}{2} \tilde{H}^3 |\nabla \tilde{H}| \leq \frac{1}{4} \tilde{\epsilon}^2 \tilde{H}^2 |\nabla \tilde{H}|^2 + \frac{C_S}{4 \tilde{\epsilon}^2} \tilde{H}^4.\]
Plugging them back gives the desired inequality.
\end{proof}

Now, we are ready to prove an ``Evolution-like'' inequality (\ref{E:EVOLUTION}) for $f$.

\begin{lemma}
\label{L:EVOLUTION}
There exists a constant $C=C(S,\Sigma_0,p,\sigma)>0$ such that
\begin{align*}
\frac{d}{dt} \int_{\Sigma_t} f_k^p
\le& -\frac{1}{3} p^2 \int_{\Sigma_t}f_k^{p-2}  |\nabla f|^2 -\frac {p \tilde{\epsilon}^2}{4}\int_{\Sigma_t}\frac{|\nabla \ti H|^2}{\ti H^{2-\sigma}}f_k^{p-1}+2\sigma p\int_{A(k,t)} \ti H^2f^p\\
& -\int_\Sigma \tilde{H}^2 f^p_k +C(S)p \int_{\partial \Sigma_t}f_k^{p-1}\ti H^\sigma+C\left(\int_{A(k,t)} f^p + |A(k,t)|\right)
\end{align*}
where $f_k=(f-k)_+$ and $A(k,t)=\{f \geq k\} \cap \Sigma_t$.
\end{lemma}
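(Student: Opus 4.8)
The plan is to mimic the proof of the ``Evolution-like'' inequality in \cite[Theorem 5.5]{Huisken84}, carefully tracking the new boundary terms and the error terms coming from the perturbation tensor $P^\Sigma$. First I would compute the evolution of $f=(|\tilde A|^2-\tfrac12\tilde H^2)/\tilde H^{2-\sigma}$. Writing $f=\tilde u\,\tilde H^\sigma$ with $\tilde u:=(|\tilde A|^2-\tfrac12\tilde H^2)/\tilde H^2$, one combines Proposition \ref{P:Atilde-evolution} (in particular (\ref{E:pinch-evolve}) and (\ref{E:Htilde-evolve})) with the formulas (\ref{E:nabla-f}) and (\ref{E:Delta-f}) derived in the proof of Lemma \ref{L:POINCARE}. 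The key point is that (\ref{E:pinch-evolve}) gives $(\partial_t-\Delta)(|\tilde A|^2-\tfrac12\tilde H^2)=2|A|^2(|\tilde A|^2-\tfrac12\tilde H^2)-2(|\nabla\tilde A|^2-\tfrac12|\nabla\tilde H|^2)+O(1+|A|^3)$, so that after dividing by $\tilde H^{2-\sigma}$ and using $|A|\le H=\tilde H+O(1)$, $\tilde H\ge 1$, the zeroth-order reaction term $2|A|^2 f$ is exactly cancelled up to a $\sigma\tilde H^2 f$ correction by the $-(2-\sigma)\tilde H^{-1}f\,\partial_t\tilde H$ contribution, just as in Huisken's computation, leaving a gradient term and controllable errors of order $\tilde H^{1+\sigma}$ and $\tilde H^{-1+\sigma}|\nabla\tilde H|$.

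Next I would establish the reaction--diffusion inequality for $f$ itself: something of the shape
\begin{align*}
(\partial_t-\Delta)f\le -\frac{2(1-\sigma)}{\tilde H}\langle\nabla\tilde H,\nabla f\rangle-\frac{2}{\tilde H^{4-\sigma}}|\tilde H\nabla\tilde A-\tilde A\nabla\tilde H|^2+\sigma\tilde H^2 f+C(S)\Big(\tilde H^{1+\sigma}+\frac{|\nabla\tilde H|}{\tilde H^{1-\sigma}}\Big),
\end{align*}
where the negative definite term is retained (in Huisken it is discarded, but here it is needed to produce the good $-\tfrac{p\tilde\epsilon^2}{4}\int \tilde H^{-2+\sigma}|\nabla\tilde H|^2 f_k^{p-1}$ term after invoking Lemma \ref{L:squeeze}). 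Then I multiply by $p f_k^{p-1}$, integrate over $\Sigma_t$, and use $\frac{d}{dt}\int_{\Sigma_t}f_k^p=\int_{\Sigma_t}\big(p f_k^{p-1}\partial_t f-H^2 f_k^p\big)$ (from Lemma \ref{L:MCF}(iii) and $\partial_t f_k=\partial_t f$ on $\{f>k\}$), which produces the $-\int_{\Sigma_t}\tilde H^2 f_k^p$ term after replacing $H^2$ by $\tilde H^2+O(\tilde H)$ and absorbing. The divergence term $-p\int f_k^{p-1}\Delta f$ gives $p(p-1)\int f_k^{p-2}|\nabla f|^2$ together with the boundary integral $-p\int_{\partial\Sigma_t}f_k^{p-1}N(f)$; by (\ref{E:f-bound-1}) the latter is bounded by $C(S)p\int_{\partial\Sigma_t}f_k^{p-1}\tilde H^\sigma$, which is exactly the boundary term allowed in (\ref{E:EVOLUTION}).

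The remaining work is bookkeeping: the Peter--Paul inequality applied to the $\tfrac{1}{\tilde H}\langle\nabla\tilde H,\nabla f\rangle$ cross term trades part of $p(p-1)\int f_k^{p-2}|\nabla f|^2$ for a small multiple of $\int\tilde H^{-2+\sigma}|\nabla\tilde H|^2 f_k^{p-1}$ (this is why only $\tfrac13 p^2$, not the full $p(p-1)$, survives in front of $\int f_k^{p-2}|\nabla f|^2$); Lemma \ref{L:squeeze} turns the retained $-2\tilde H^{-4+\sigma}|\tilde H\nabla\tilde A-\tilde A\nabla\tilde H|^2$ into $-\tfrac12\tilde\epsilon^2\tilde H^{-2+\sigma}|\nabla\tilde H|^2$ up to a $+C(S)\tilde\epsilon^{-2}\tilde H^\sigma$ error, half of which is kept as the $-\tfrac{p\tilde\epsilon^2}{4}$ gradient term and half eaten by Peter--Paul; the $\sigma\tilde H^2 f$ reaction term integrates to $p\sigma\int_{A(k,t)}\tilde H^2 f^p$ after noting $f_k^{p-1}f\le f^p$ on $A(k,t)$ and bounding $\tilde H^2\le C\tilde H^2$, yielding the $2\sigma p\int_{A(k,t)}\tilde H^2 f^p$ term (the constant $2$ absorbing the slack); and the error terms $C(S)\tilde H^{1+\sigma}$, $C(S)\tilde H^{-1+\sigma}|\nabla\tilde H|$ are handled, after another Cauchy--Schwarz on the second, by the Young-type inequality (\ref{E:Young}) with $g=f_k$ to produce $\int\tilde H^2 f_k^p$ (absorbed into the $-\tfrac15$-type negative term, here stated as $-\int\tilde H^2 f_k^p$), $C\int_{A(k,t)}f^p$, and $C|A(k,t)|$. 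The main obstacle is making sure the boundary term from integrating $\Delta f$ by parts is genuinely of the harmless form $\int_{\partial\Sigma_t}f_k^{p-1}\tilde H^\sigma$: this rests entirely on $Nf=O(\tilde H^\sigma)$, which in turn comes from Lemma \ref{L:A-tilde-1} and Lemma \ref{L:N-htilde} forcing $\tilde h_{12}=0$ and $N|\tilde A|=O(\tilde H)$ along $\partial\Sigma$ — the whole reason for introducing the perturbation tensor — so one must be careful that no uncontrolled $\nabla_1\tilde h_{12}$ ever appears, exactly as in the interior-vs-boundary case analysis of Theorem \ref{T:tilde-convexity-I}.
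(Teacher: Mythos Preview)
Your proposal is correct and follows essentially the same approach as the paper: derive $(\partial_t-\Delta)f$ from Proposition~\ref{P:Atilde-evolution} and (\ref{E:Delta-f}), retain the negative $|\tilde H\nabla\tilde A-\tilde A\nabla\tilde H|^2$ term and convert it via Lemma~\ref{L:squeeze}, multiply by $pf_k^{p-1}$, integrate by parts picking up the boundary term controlled by $Nf=O(\tilde H^\sigma)$, and absorb the $O(\tilde H^{1+\sigma})$ errors using (\ref{E:Young}). One small slip: the cross term in the evolution inequality should be $+\tfrac{2(1-\sigma)}{\tilde H}\langle\nabla\tilde H,\nabla f\rangle$ (it comes from subtracting the $\Delta f$ formula), but since you treat it by Peter--Paul the sign is immaterial.
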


\begin{proof}
We first have to derive a good evolution inequality for $f$. To this end, we compute as in \cite[Lemma 5.2]{Huisken84} using (\ref{E:Htilde-evolve}), (\ref{E:|Atilde|^2-evolve}) and (\ref{E:A-Atilde-bound}),
\begin{align*}
\partial_t f =  \frac{\tilde{H} \Delta |\tilde{A}|^2 - (2-\sigma) |\tilde{A}|^2 \Delta \tilde{H}}{\tilde{H}^{3-\sigma}} -\frac{\sigma}{2} \frac{\Delta \tilde{H}}{\tilde{H}^{1-\sigma}} - \frac{2}{\tilde{H}^{2-\sigma}} |\nabla \tilde{A}|^2 +\sigma |\tilde{A}|^2 f + O(\tilde{H}^{1+\sigma}).
\end{align*}
Combining this with (\ref{E:Delta-f}), and using Lemma \ref{L:squeeze}, we obtain the inequality
\begin{align*}
(\partial_t -\Delta) f \leq & -\frac{2}{\tilde{H}^{4-\sigma}} |\tilde{H} \nabla \tilde{A} -\tilde{A} \nabla \tilde{H}|^2 + \sigma |\tilde{A}|^2 f +\frac{2(1-\sigma)}{\tilde{H}} \langle \nabla \tilde{H}, \nabla f \rangle +O(\tilde{H}^{1+\sigma}) \\
\leq & \frac{2(1-\sigma)}{\tilde{H}} \langle \nabla \tilde{H}, \nabla f \rangle  - \frac{\tilde{\epsilon}^2}{2} \frac{1}{\tilde{H}^{2-\sigma}} |\nabla \tilde{H}|^2 + \sigma |\tilde{A}|^2 f +\tilde{\epsilon}^{-2} O\left(\tilde{H}^{1+\sigma} \right).
\end{align*}

Multiply the inequality above by $p f^{p-1}_k$ and then integrate by parts as in \cite[Lemma 5.5, 5.7]{Huisken84}, we have
\begin{align*}
\partial_t \int_\Sigma f_k^p & + \frac{p(p-1)}{2} \int_{A(k,t)} f_k^{p-1} |\nabla f|^2 + \frac{p}{4} \tilde{\epsilon}^2 \int_\Sigma \frac{1}{\tilde{H}^{2-\sigma}} f_k^{p-1} |\nabla \tilde{H}|^2 + \int_\Sigma H^2 f^p_k \\
\leq &  \sigma p \int_{A(k,t)} \tilde{H}^2 f^{p-1}_k f +\tilde{\epsilon}^{-2} p \int_{A(k,t)} f^{p-1}_k O\left(\tilde{H}^{1+\sigma}\right)    + p \int_{\partial \Sigma} f^{p-1}_k N(f)
\end{align*}
Note that by (\ref{E:A-Atilde-bound}) and (\ref{E:f-bound-1}), we have 
\begin{align*}
-\int_\Sigma H^2 f^p_k \leq & -\int_\Sigma \tilde{H}^2 f^p_k + \int_{A(k,t)} f^p_k O(\tilde{H}) \\
\leq &  -\int_\Sigma \tilde{H}^2 f^p_k + \int_{A(k,t)} f^{p-1}_k O\left(\tilde{H}^{1+\sigma}\right),
\end{align*}
and
\[ p \int_{\partial \Sigma} f^{p-1}_k N(f) \leq C(S) p \int_{\partial \Sigma} f^{p-1}_k \tilde{H}^\sigma. \]

Finally, it remains to estimate the error term. First of all, applying Young's inequality (\ref{E:Young}) we get
\begin{align*}
C(S)(1+\tilde{\epsilon}^{-2}  p)  \int_{A(k,t)} f^{p-1}_k \tilde{H}^{1+\sigma} \leq & \sigma p \int_{A(k,t)} f^p_k \tilde{H}^2 \\
& + C(S,\tilde{\epsilon},\sigma,p) \left(\int_{A(k,t)} f^p_k  + |A(k,t)| \right). 
\end{align*}
\end{proof}

We now apply all the results above to prove Theorem \ref{T:pinching-A-tilde}.

\begin{proof}[Proof of Theorem \ref{T:pinching-A-tilde}]
Lemma \ref{L:POINCARE} and \ref{L:EVOLUTION} imply that we can apply Theorem \ref{T:Stampacchia} to the function $f$ so that for some fixed large $p$ (depending on $S$ and $\Sigma_0$) and small $\sigma$ (depending on $p$), there exists a constant $\tilde{C}_0=\tilde{C}_0(S,\Sigma_0)>0$ such that on the entire spacetime $\Sigma \times [0,T)$, we have
\[ \frac{|\ti A|^2-\frac12\ti H^2}{\ti H^{2-\sigma}}\le \tilde{C}_0<\infty.\]
\end{proof}

Using (\ref{E:pinching-A-tilde}) together with the bound (\ref{E:A-Atilde-bound}) and Peter-Paul inequality, we have for any $\eta>0$
\begin{align*}
|A|^2-\frac{1}{2} H^2 \leq & |\tilde{A}|^2 -\frac{1}{2} \tilde{H}^2 +C(S) \tilde{H} \\
\leq & \tilde{C}_0 \tilde{H}^{2-\sigma} +C(S) \tilde{H} \\
\leq & \frac{\eta}{2} \tilde{H}^2 + C(S,\eta,\Sigma_0)  +C(S) \tilde{H}\\
\leq & \frac{\eta}{2} H^2 + C(S,\eta,\Sigma_0) +C(S) H\\
\leq & \eta H^2 +C(S,\eta,\Sigma_0).
\end{align*}
Therefore, we have the following corollary.

\begin{corollary}
\label{C:pinch-bound}
For any $\eta>0$, we have
\[ |A|^2 -\frac{1}{2} H^2 \leq \eta H^2 +C(S,\eta,\Sigma_0).\]
\end{corollary}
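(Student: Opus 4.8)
The statement in question, Corollary \ref{C:pinch-bound}, is an immediate consequence of Theorem \ref{T:pinching-A-tilde}, so the plan is really just to make the chain of implications explicit. The plan is to start from the pinching estimate \eqref{E:pinching-A-tilde} for the perturbed second fundamental form, namely $|\tilde A|^2 - \tfrac12 \tilde H^2 \le \tilde C_0 \tilde H^{2-\sigma}$, and then convert it into the corresponding statement for the genuine second fundamental form $A$ using the comparison bounds \eqref{E:A-Atilde-bound}, which say that $\tilde A = A + O(1)$ and $|\tilde A| = |A| + O(1)$, together with $\tilde H = H + O(1)$.

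The key steps, in order, are: (i) expand $|A|^2 - \tfrac12 H^2$ in terms of $|\tilde A|^2 - \tfrac12 \tilde H^2$ and the $O(1)$ error terms, which after using $|A|, |\tilde A| \le H + O(1)$ (valid since $\Sigma_t$ is convex, so $|A| \le H$, and $\tilde H = H + O(1)$) produces an estimate of the form $|A|^2 - \tfrac12 H^2 \le |\tilde A|^2 - \tfrac12 \tilde H^2 + C(S)\tilde H$; (ii) substitute \eqref{E:pinching-A-tilde} to get $|A|^2 - \tfrac12 H^2 \le \tilde C_0 \tilde H^{2-\sigma} + C(S) \tilde H$; (iii) absorb the sublinear-in-$H^2$ terms $\tilde H^{2-\sigma}$ and $\tilde H$ into $\eta H^2/2$ at the cost of an additive constant, using Peter--Paul (Young's) inequality: for any $\eta > 0$ one has $\tilde C_0 \tilde H^{2-\sigma} \le \tfrac{\eta}{2}\tilde H^2 + C(S,\eta,\Sigma_0)$ (since $2-\sigma < 2$) and similarly $C(S)\tilde H \le \tfrac{\eta}{2}\tilde H^2 + C(S,\eta)$; (iv) finally replace $\tilde H$ by $H + O(1)$ once more and absorb the remaining lower-order terms, arriving at $|A|^2 - \tfrac12 H^2 \le \eta H^2 + C(S,\eta,\Sigma_0)$. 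Throughout one uses the fact, already established, that $\tilde H \ge 1$ (indeed $H$ is bounded below by a large constant since $\Sigma_0$ is sufficiently convex and lower bounds of $H$ are preserved by Corollary \ref{C:H-preserve}), which makes all the $O(1)$-versus-$O(\tilde H^{1+\sigma})$-type comparisons harmless.

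There is essentially no obstacle here: the only mild care needed is bookkeeping of the constants' dependencies — making sure that after the two Peter--Paul absorptions the additive constant depends only on $S$, $\eta$, and $\Sigma_0$ (the latter entering through $\tilde C_0$, $\sigma$, $\tilde\epsilon$ and $\epsilon$), and not on $t$ or $T$. Since $\tilde C_0$ and $\sigma$ from Theorem \ref{T:pinching-A-tilde} depend only on $S$ and $\Sigma_0$, and the implied constants in \eqref{E:A-Atilde-bound} depend only on $S$, this is automatic. In fact the displayed computation preceding the statement of Corollary \ref{C:pinch-bound} already carries out exactly this four-line estimate, so the proof amounts to nothing more than citing that computation.

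\begin{proof}
This follows immediately from Theorem \ref{T:pinching-A-tilde} and the bounds \eqref{E:A-Atilde-bound}. Indeed, since $\Sigma_t$ is convex we have $|A| \le H$, and together with $\tilde H = H + O(1)$ and $|\tilde A| = |A| + O(1)$ we obtain
\[
|A|^2 - \tfrac12 H^2 \le |\tilde A|^2 - \tfrac12 \tilde H^2 + C(S)\tilde H.
\]
Applying \eqref{E:pinching-A-tilde} and then the Peter--Paul inequality (using $2 - \sigma < 2$ and $\tilde H \ge 1$), we get for any $\eta > 0$
\[
|A|^2 - \tfrac12 H^2 \le \tilde C_0 \tilde H^{2-\sigma} + C(S)\tilde H \le \tfrac{\eta}{2} \tilde H^2 + C(S,\eta,\Sigma_0) \le \tfrac{\eta}{2} H^2 + C(S,\eta,\Sigma_0) + C(S) H \le \eta H^2 + C(S,\eta,\Sigma_0),
\]
where in the last two inequalities we again used $\tilde H = H + O(1)$ and absorbed the linear term into $\tfrac{\eta}{2} H^2$ at the cost of enlarging the additive constant.
\end{proof}
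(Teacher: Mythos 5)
Your proposal is correct and coincides with the paper's own argument: the displayed computation immediately before Corollary \ref{C:pinch-bound} carries out exactly the chain you describe, namely (\ref{E:pinching-A-tilde}) combined with (\ref{E:A-Atilde-bound}) and Peter--Paul absorption of the sublinear terms $\tilde{H}^{2-\sigma}$ and $\tilde{H}$ into $\eta H^2$. The bookkeeping of constant dependencies is also handled the same way.
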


\section{Gradient estimate for the mean curvature}
\label{S:H-gradient-estimate}

In this section, we derive a gradient estimate for the mean curvature, which can be used to compare the mean curvature at different points. Together with all the previous parts, our main result Theorem \ref{T:main} then follows from standard arguments as in \cite{Huisken84}. Note that we only need the gradient estimate below with $\eta>0$ small.

\begin{theorem}
\label{T:H-gradient-bound}
Under the assumption of (\ref{E:lower-pinch}) and (\ref{E:lower-pinch-tilde}), there exists $\eta_0=\eta_0(S)>0$ such that for each $0 < \eta <\eta_0$, there exists a constant $C=C(S,\eta,\Sigma_0)$ such that 
\[ |\nabla H|^2 \leq \eta H^4 +C(S,\eta,\Sigma_0)\]
holds on $\Sigma \times [0,T)$.
\end{theorem}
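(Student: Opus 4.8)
The plan is to follow the Huisken-type gradient-estimate argument \cite[Section 6]{Huisken84}, adapted to the free-boundary setting using the perturbed second fundamental form and the interior pinching estimate of Corollary \ref{C:pinch-bound}. First I would introduce the standard test function
\[ g := \frac{|\nabla \tilde{A}|^2}{\tilde{H}^3} + N_1 \frac{|\tilde{A}|^2 - \frac{1}{2}\tilde{H}^2}{\tilde{H}} \]
for a large constant $N_1 = N_1(S,\Sigma_0)$, together with the function $\phi_\eta := |\tilde A|^2 - \frac12 \tilde H^2 - \eta \tilde H^2$ which is negative (up to a controlled constant) by the $\tilde{A}$-version of Corollary \ref{C:pinch-bound}. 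The aim is to show, via the maximum principle, that $g$ (or a suitable product/combination controlling $\frac{|\nabla \tilde H|^2}{\tilde H^4}$) stays bounded; from this and the $C^0$ bounds relating $\tilde A, A$ and their derivatives, \eqref{E:A-Atilde-bound}--\eqref{E:nablaA-Atilde-bound}, the estimate $|\nabla H|^2 \le \eta H^4 + C$ follows after absorbing lower-order errors exactly as in the passage from Theorem \ref{T:pinching-A-tilde} to Corollary \ref{C:pinch-bound}.

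Next I would compute the evolution inequality $(\partial_t - \Delta) g$. The key algebraic input is the reaction-diffusion inequality for $|\nabla \tilde A|^2$, obtained by differentiating \eqref{E:htilde-evolve} and using the Simons-type identity already appearing in the proof of Lemma \ref{L:POINCARE}; the dangerous good term is the one proportional to $-\frac{|\nabla^2 \tilde A|^2}{\tilde H^3}$, which must dominate the cross terms, while the $N_1$-weighted term contributes a strongly negative multiple of $\frac{|\nabla \tilde A|^2}{\tilde H^3}\cdot(\text{gap})$ by Lemma \ref{L:squeeze} and the pinching $|\tilde A|^2 - \frac12\tilde H^2 \le \tilde C_0 \tilde H^{2-\sigma}$ of Theorem \ref{T:pinching-A-tilde}, so that choosing $N_1$ large makes the interior reaction term of $g$ nonpositive wherever $g$ is large. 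All the $P^\Sigma$-generated error terms are $O(1+|A|^2+|\nabla A|)$ by \eqref{E:P-bounds}, hence of lower order compared with the leading $\tilde H^4$-scale terms once $\tilde H$ is large (which we may assume since $\tilde H$ is bounded below), so they are harmless.

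The genuinely new work, and the step I expect to be the main obstacle, is the boundary analysis: one must show $N(g) \le 0$ (or $\le$ a harmless term) along $\partial\Sigma$ so that no interior maximum of $g$ can first occur on the boundary, invoking \cite[Lemma 3.4]{Stahl96a}. Here the decomposition $\tilde h_{12}=0$, $\nabla_1 \tilde h_{11}=\nabla_1\tilde h_{22}=\cdots$ from Lemmas \ref{L:A-tilde-1}, \ref{L:N-htilde}, \ref{L:P-def-1} is essential: it lets one express $N|\nabla \tilde A|^2$ in terms of the $\nabla_1\tilde h_{ij}$ and second boundary derivatives of $\tilde A$, the latter controlled by differentiating the boundary identities \eqref{E:N-h11-tilde}--\eqref{E:N-h22-tilde} tangentially along $\partial\Sigma$ and using \eqref{E:d-estimate}, \eqref{A:S-bound} and Lemma \ref{L:A-N}. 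One also needs $N\big(\frac{|\tilde A|^2 - \frac12\tilde H^2}{\tilde H}\big)$, which is $O(\tilde H^\sigma)\cdot\tilde H = O(\tilde H^{1+\sigma})$ by \eqref{E:N-bounds-1}, hence dominated. The upshot should be $N(g) \le C(S)\,\tilde H \le C(S)\,g^{1/2}\tilde H^{?}$-type terms that can be absorbed, or a clean $N(g)\le 0$ after adding to $g$ a further small multiple of $\tilde H^\sigma$ as a corrective term as in Section \ref{S:pinching-A-circle}.

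Finally, with $g$ bounded in $\Sigma\times[0,T)$ one concludes: $\frac{|\nabla \tilde H|^2}{\tilde H^4} \le C|\nabla\tilde A|^2/\tilde H^4 \le C\,g/\tilde H \le C/\tilde H$, which is actually stronger than claimed, but more carefully one tracks the $\eta$-dependence by splitting $g = g_\eta$ with the pinching gap $\phi_\eta$ so that the bound on $g$ degenerates as $\eta\to 0$, yielding $|\nabla\tilde H|^2 \le \eta\tilde H^4 + C(S,\eta,\Sigma_0)$; converting back to $H$ via \eqref{E:A-Atilde-bound}, \eqref{E:nablaA-Atilde-bound} and Peter--Paul, exactly as in the derivation of Corollary \ref{C:pinch-bound}, gives $|\nabla H|^2 \le \eta H^4 + C(S,\eta,\Sigma_0)$ for all $\eta < \eta_0(S)$, completing the proof.
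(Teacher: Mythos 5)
Your interior analysis is plausible, but the boundary step --- which you yourself flag as ``the main obstacle'' --- is a genuine gap, and the fix you sketch does not work. Putting $|\nabla \tilde{A}|^2$ (or $|\nabla \tilde A|^2/\tilde H^3$) into the test function forces you to compute $N(|\nabla\tilde A|^2)=2\,g^{k\ell}\langle\nabla_1\nabla_k\tilde h_{ij},\nabla_\ell\tilde h_{ij}\rangle+\dots$ along $\partial\Sigma$, and this expression contains second \emph{normal} derivatives such as $\nabla_1\nabla_1\tilde h_{11}$ and $\nabla_1\nabla_1\tilde h_{12}$. Differentiating the boundary identities (\ref{E:N-h11-tilde})--(\ref{E:N-h22-tilde}) tangentially only yields mixed derivatives of the form $\nabla_2\nabla_1\tilde h_{ij}$; the normal--normal derivatives are not determined by the free-boundary condition or any of Lemmas \ref{L:A-tilde-1}, \ref{L:N-htilde}, \ref{L:P-def-1}, and trading $\nabla_1\nabla_1$ for $\Delta-\nabla_2\nabla_2$ via the interior equation reintroduces $\partial_t\tilde h_{ij}$ at the boundary, which is equally uncontrolled. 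This is precisely the obstruction that prevents a naive transplant of Huisken's Section 6 argument to the free-boundary setting, and it is why neither Stahl nor Edelen ever places $|\nabla A|^2$ in the gradient test function.

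The paper avoids the issue entirely by a different choice of leading term: it uses $|\nabla H-h^S_{\nu\nu}H\nu_S^T|^2/H$ (following \cite[Lemma 9.6]{Edelen16}). Since $\partial_1 H=h^S_{\nu\nu}H$ on $\partial\Sigma$ (Lemma \ref{L:N-H}), the normal component of $\nabla H-h^S_{\nu\nu}H\nu_S^T$ vanishes at the boundary, so its squared norm reduces to $(\partial_2H)^2$ there, and the normal derivative (\ref{E:N-term-1}) closes up using only \emph{first-order} boundary data ($\partial_1\partial_2H$ is obtained by tangentially differentiating $\partial_1H=h^S_{\nu\nu}H$). The term $|\nabla A|^2$ enters the argument only through the \emph{interior} evolution inequalities (Lemmas \ref{L:g-1st-term-evolution}, \ref{L:g-2nd-term-evolution}), where the good term $-\tfrac{b}{3}H|\nabla A|^2$ absorbs the bad ones; it never has to be differentiated at $\partial\Sigma$. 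Two further devices you would need in any case: the weight $\zeta=\eta e^{\rho/\eta}$ with $N\zeta\equiv1$ on $\partial\Sigma$, so that $-\zeta H^3$ contributes a dominating $-H^3$ to $Ng$; and the factor $e^{-d\rho-ft}$, which converts $Ng\le C(S)g$ into a strict boundary inequality and tames the linear-in-time interior growth. Unless you can either supply the missing second normal derivatives (which I do not believe is possible from the boundary conditions alone) or replace your leading term by one whose boundary normal derivative is first-order computable, the proposal does not close.
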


Let $\eta>0$ be fixed. WLOG, we assume $\eta < \min\{ (4K)^{-1},1\}$. As in \cite{Huisken84} and \cite{Edelen16}, we consider the following test functions defined on $\Sigma \times [0,T)$ by
\[ g:=  \frac{|\nabla H-h^S_{\nu\nu}H\nu_S^T|^2}H+bH\Big(|\ti A|^2-\frac12 \tilde{H}^2\Big)+ba|\ti A|^2-\eta e^{\frac{1}{\eta} \rho} H^3 +c\]
where $a,b,c$ are positive constants to be determined later. Here, $h^S$ is the second fundamental form of $S$ extended to $\mathbb{R}^3$ as in Section \ref{SS:barrier}, and $\nu_S^T$ is the tangential component (with respect to $\Sigma_t$) of the extended unit normal $\nu_S$. Moreover, $\rho$ is a function depending on the parameter $\eta$ defined by (recall the signed distance function $d$ to the barrier $S$ and the radial cutoff function $\chi$ from Section \ref{SS:barrier})
\[
\rho (x):=d(x)\chi\left(\frac{|d(x)|}{\eta}\right).
\]
From this definition and a similar calculation as in Section \ref{SS:barrier}, we know that $\rho$ is supported in the tubular neighborhood $S_{2\eta}$ and satisfies the bounds (using the bounds in Section \ref{SS:barrier})
\begin{equation}
\label{E:rho-estimate}
\|\rho\|_{C_0(\R^3)}\le 2 \eta, \quad \|D\rho\|_{C_0(\R^3)}\le 5 \quad \text{and} \quad \|D^2\rho\|_{C_0(\R^3)}\le \frac{15}{\eta}.
\end{equation}

Restricting the function $\rho$ to the evolving surface $\Sigma=\Sigma_t$ and using the formula $\Delta \rho=\textrm{tr}_{\Sigma} D^2 \rho -H D_\nu \rho$, we have the estimates
\begin{equation}
\label{E:rho-estimate-2}
|(\partial_t - \Delta)\rho| \leq \frac{30}{\eta}.
\end{equation}
Furthermore, we have $N \rho \equiv 1$ along $\partial \Sigma$. Using these, if we let $\zeta:=\eta e^{\frac{1}{\eta} \rho}$, then along $\partial \Sigma$ we have
\begin{equation}
\label{E:zeta-boundary}
\zeta \equiv \eta \qquad \text{ and } \qquad N \zeta \equiv 1.
\end{equation}
Furthermore, from (\ref{E:rho-estimate}) and (\ref{E:rho-estimate-2}), we have on $\Sigma$ the following estimates
\begin{equation}
\label{E:zeta-interior}
\eta e^{-2} \le\zeta\le \eta e^{2}, \quad  |\nabla\zeta| \leq 5e^{2} \quad \text{and} \quad |(\partial_t - \Delta)\zeta| \leq \frac{55 e^{2}}{\eta}
\end{equation}
These properties of $\zeta$ will become crucial in the proof of Theorem \ref{T:H-gradient-bound}.

For the proof of Theorem \ref{T:H-gradient-bound} we begin with computing the boundary derivatives of the terms appearing in $g$.

\begin{lemma}
Along $\partial \Sigma$, we have for all $t>0$,
\begin{align}
\label{E:N-term-1}
N (|\nabla H-h^S_{\nu\nu}H\nu^T_S|^2)= & 2(h^S_{\nu \nu} -h^S_{22}) |\nabla H-h^S_{\nu\nu}HV|^2 \\
& + 2(\nabla^S_2 h^S_{\nu \nu} +2 h_{22} h^S_{2 \nu}) H (\partial_2 H), \nonumber
\end{align}
\begin{align}
\label{E:N-term-2}
N\left(|\ti A|^2-\frac12 \tilde{H}^2\right)= & 2(h^S_{\nu \nu} - 4h^S_{22})\left(|\ti A|^2-\frac12 \tilde{H}^2\right) - 2h^S_{22} h_{11} (h_{22}-h_{11}) \\
& +2 (\nabla^S_\nu h^S_{22}) (h_{11}-h_{22}). \nonumber
\end{align}
\end{lemma}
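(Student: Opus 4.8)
The plan is to prove both formulas by working in the Fermi coordinates $\partial_1=N$, $\partial_2$ along $\partial\Sigma$ and reducing everything to the boundary normal derivatives already computed in Lemma~\ref{L:N-htilde}, together with the structural properties of the extensions from Section~\ref{SS:barrier}. For (\ref{E:N-term-2}), since $N$ acts on the scalar $|\tilde A|^2-\frac12\tilde H^2$ simply as $\nabla_1$, I would write along $\partial\Sigma$
\[
N\Big(|\tilde A|^2-\tfrac12\tilde H^2\Big)=2\tilde h^{ij}\nabla_1\tilde h_{ij}-\tilde H\nabla_1\tilde H .
\]
The key point is that, by Lemma~\ref{L:A-tilde-1}, the perturbed form decomposes at $\partial\Sigma$: $\tilde h_{12}=0$, $\tilde h_{11}=h_{11}$, $\tilde h_{22}=h_{22}$, $\tilde H=H$. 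Hence the cross term $\tilde h_{12}\nabla_1\tilde h_{12}$ --- precisely the uncontrollable term appearing in (\ref{E:N-A2}) --- drops out, and one is left with $(h_{11}-h_{22})(\nabla_1\tilde h_{11}-\nabla_1\tilde h_{22})$. Substituting (\ref{E:N-h11-tilde})--(\ref{E:N-h22-tilde}), and using $H=h_{11}+h_{22}$ together with $|\tilde A|^2-\frac12\tilde H^2=\frac12(h_{11}-h_{22})^2$ on $\partial\Sigma$, a short algebraic rearrangement produces exactly the right-hand side of (\ref{E:N-term-2}).

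For (\ref{E:N-term-1}), set $W:=\nabla H-h^S_{\nu\nu}H\,\nu_S^T$ (the vector inside the test function $g$; ``$V$'' in the statement denotes $\nu_S^T$). Along $\partial\Sigma$ the free boundary condition gives $\nu_S^T=N=\partial_1$, and Lemma~\ref{L:N-H} gives $\partial_1H=h^S_{\nu\nu}H$, so $W=(\partial_2H)\,\partial_2$ there; consequently $N(|W|^2)=2(\partial_2H)\,\langle\nabla_NW,\partial_2\rangle$ and only the $\partial_2$-component of $\nabla_NW$ enters. The reason this particular combination is used is that the correction term $h^S_{\nu\nu}H\,\nu_S^T$ contributes nothing to that component: on $\partial\Sigma$ one has $\nu_S^T=\partial_1\perp\partial_2$, and $D_1\nu_S^T$ is again orthogonal to $\partial_2$ --- indeed $D_1\nu_S=D_{\nu_S}\nu_S=0$ since $\nu_S$ is extended by parallel transport along the straight normal geodesics of $S$, so that $D_1\nu_S^T$ reduces to a term proportional to $\nu$. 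It then remains to compute $\langle\nabla_1\nabla H,\partial_2\rangle=\nabla_1\nabla_2H$: using symmetry of the Hessian I would write $\nabla_1\nabla_2H=\nabla_2\nabla_1H$ and differentiate the boundary identity $\partial_1H=h^S_{\nu\nu}H$ tangentially along $\partial\Sigma$, picking up $\nabla_2\partial_1=h^S_{22}\partial_2$ from the Fermi frame.

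The one computation needing care --- and the main thing to get right --- is $\partial_2 h^S_{\nu\nu}$ along $\partial\Sigma$. Since the unit normal $\nu$ of $\Sigma$ lies in $TS$ along $\partial\Sigma$, I would differentiate $h^S_{\nu\nu}=A_S(\nu,\nu)$ using the Levi-Civita connection $\nabla^S$ of $S$, getting $\partial_2 h^S_{\nu\nu}=\nabla^S_2 h^S_{\nu\nu}+2A_S(\nabla^S_{\partial_2}\nu,\nu)$; the Weingarten relation for $\Sigma$ gives $D_{\partial_2}\nu=h_{12}\partial_1+h_{22}\partial_2$, and projecting onto $TS$ (orthogonal to $\partial_1=\nu_S$) leaves $\nabla^S_{\partial_2}\nu=h_{22}\partial_2$, whence $\partial_2 h^S_{\nu\nu}=\nabla^S_2 h^S_{\nu\nu}+2h_{22}h^S_{2\nu}$. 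Feeding this back yields $\nabla_1\nabla_2H=(\nabla^S_2 h^S_{\nu\nu}+2h_{22}h^S_{2\nu})H+(h^S_{\nu\nu}-h^S_{22})\partial_2H$, and multiplying by $2\partial_2H$ and using $|W|^2=(\partial_2H)^2$ on $\partial\Sigma$ gives (\ref{E:N-term-1}). No genuine obstacle arises beyond carefully tracking which terms vanish on the free boundary and correctly handling the variation of $\nu$ inside $TS$ along $\partial\Sigma$; the remainder is routine bookkeeping.
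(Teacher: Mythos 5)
Your proposal is correct and follows essentially the same route as the paper: both identities are reduced, in Fermi coordinates, to the boundary normal derivatives of Lemma \ref{L:N-htilde}, the decomposition of $\tilde{A}$ from Lemma \ref{L:A-tilde-1}, the identity $\partial_1 H=h^S_{\nu\nu}H$, and the relation $\partial_2 h^S_{\nu\nu}=\nabla^S_2h^S_{\nu\nu}+2h_{22}h^S_{2\nu}$. The only differences are cosmetic (you compute $\nabla_1\nabla_2H$ invariantly via the Christoffel term $\nabla_2\partial_1=h^S_{22}\partial_2$ where the paper differentiates $g^{22}$, and you expand $N(|\tilde A|^2-\tfrac12\tilde H^2)$ directly instead of quoting (\ref{E:N-|A|-tilde})), and your algebra checks out.
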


\begin{proof}
For simplicity, we denote $V:=\nu_S^T$. Using Fermi coordinates near $\partial \Sigma$ and writing $V=V_1 \partial_1 + V_2 \partial_2$, we have
\[ |\nabla H-h^S_{\nu\nu}HV|^2= (\partial_1 H - h^S_{\nu \nu} H V_1)^2 + g^{22} (\partial_2 H - h^S_{\nu \nu} H V_2)^2 \]
and $|\nabla H-h^S_{\nu\nu}HV|^2=(\partial_2 H)^2$ at $\partial \Sigma$ since $V_1=1$, $V_2=0$, and $\partial_1 H=h^S_{\nu \nu} H$ by Lemma \ref{L:N-H}. Moreover, along $\partial \Sigma$ we have $\partial_1 g^{22} =-\partial_1 g_{22}=-2 h^S_{22}$ and $\partial_1 V_2 =0$. Therefore, putting all these together, we have
\begin{align*}
\frac{1}{2} \partial_1 |\nabla H-h^S_{\nu\nu}HV|^2 = & -h^S_{22} (\partial_2 H)^2 +(\partial_2 H)(\partial_1 \partial_2 H) \\
=& (h^S_{\nu \nu} -h^S_{22}) (\partial_2 H)^2 + H \partial_2 H \partial_2 h^S_{\nu \nu}.
\end{align*}
We then have (\ref{E:N-term-1}), noting that $\partial_2 h^S_{\nu \nu}=\nabla^S_2 h^S_{\nu \nu} +2 h_{22} h^S_{2 \nu}$.

For (\ref{E:N-term-2}), we compute using (\ref{E:N-|A|-tilde}) and Lemma \ref{L:A-tilde-1}, \ref{L:N-htilde} that
\begin{align*}
\frac{1}{2} N \left(|\ti A|^2-\frac12 \tilde{H}^2\right) =& h^S_{\nu\nu} \left(|\ti A|^2-\frac12 \tilde{H}^2\right)  + h^S_{22}(3Hh_{11}-2|\tilde{A}|^2-2 h^2_{11}) \\
& + (\nabla^S_\nu h^S_{22}) (h_{11}-h_{22}).
\end{align*} 
Along $\partial \Sigma$, we have $|\tilde{A}|^2=h^2_{11}+h^2_{22}$ and $H=h_{11}+h_{22}$ by Lemma \ref{L:A-tilde-1}. Therefore,
\begin{align}
\label{E:nice-trick}
3Hh_{11}-2|\tilde{A}|^2-2 h^2_{11} = & -2 (h_{22}-h_{11})^2 -h_{11} (h_{22}-h_{11}) \\
= & -4 \left(|\ti A|^2-\frac12  \tilde{H}^2\right)  -h_{11} (h_{22}-h_{11})  \nonumber
\end{align}
noting that $|\ti A|^2-\frac12  \tilde{H}^2 = \frac{1}{2} (h_{22}-h_{11})^2$.
\end{proof}

Next we have to compute the evolution equations of the terms in $g$. We first establish a lemma.

\begin{lemma}
\label{L:h^S-evolution}
We have the following evolution equation:
\begin{equation*}
(\partial_t -\Delta) h^S_{\nu \nu} =  2 |A|^2 h^S_{\nu \nu}  -4h_{pk} D_p h^S_{k \nu}-2 h_{pk} h_{p \ell} h^S_{k \ell} -D^2_{p,p} h^S_{\nu \nu}.
\end{equation*}
In particular, we have the bounds $\nabla h^S_{\nu \nu}=O(1+|A|)$ and $(\partial_t -\Delta) h^S_{\nu\nu}=O(1+|A|^2)$.
\end{lemma}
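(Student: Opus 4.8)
The plan is to compute $(\partial_t-\Delta)h^S_{\nu\nu}$ by a direct chain-rule calculation, treating $h^S$ as the \emph{fixed} $(0,2)$-tensor $A_S$ on $\mathbb{R}^3$ produced by the extension procedure of Section \ref{SS:barrier}, and viewing $h^S_{\nu\nu}=A_S(\nu,\nu)$ as a function on $\Sigma\times[0,T)$ pulled back along the flow $F$. I would work at a fixed point and time in a local orthonormal frame $\{e_p\}$ for $T\Sigma_t$ that is $\nabla^\Sigma$-parallel there, so that the Weingarten relations give $D_{e_p}e_q=-h_{pq}\nu$ and $D_{e_p}\nu=h_{pk}e_k$, where $D$ is the flat ambient connection. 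The organising principle is that $D$ only ever differentiates $A_S$ itself: every time one of the arguments $\nu$ or $e_k$ is differentiated, a factor of the second fundamental form $h$ of $\Sigma$ appears. This is also the source of the bookkeeping that I expect to be the only delicate point — keeping the ambient flat derivative $D$ (which never touches the arguments) strictly separate from the induced covariant derivative, and matching factors correctly.

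First I would compute the first covariant derivative of the scalar $h^S_{\nu\nu}$: by the ambient Leibniz rule,
\[
\nabla_p h^S_{\nu\nu}=(D_{e_p}A_S)(\nu,\nu)+2A_S(D_{e_p}\nu,\nu)=D_p h^S_{\nu\nu}+2h_{pk}h^S_{k\nu},
\]
which already yields $\nabla h^S_{\nu\nu}=O(1+|A|)$ once we use $\|h^S\|_{C^1(\mathbb{R}^3)}=O(1)$ from the extension estimates. Differentiating this identity once more and tracing, the two summands contribute as follows. Differentiating $D_p h^S_{\nu\nu}=(D_{e_p}A_S)(\nu,\nu)$ produces the ambient Hessian trace $D^2_{p,p}h^S_{\nu\nu}$, a term $-H\,D_\nu h^S_{\nu\nu}$ (from $D_{e_p}e_p=-h_{pp}\nu$), and $2h_{pk}D_p h^S_{k\nu}$ (from $D_{e_p}\nu=h_{pk}e_k$). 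Differentiating $2h_{pk}h^S_{k\nu}$ produces $2(\nabla_p h_{pk})h^S_{k\nu}$, another copy of $2h_{pk}D_p h^S_{k\nu}$, and the quadratic terms $-2|A|^2 h^S_{\nu\nu}+2h_{pk}h_{p\ell}h^S_{k\ell}$. Applying the Codazzi equation in the form $\nabla_p h_{pk}=\nabla_k H$ rewrites the first of these as $2(\nabla_k H)h^S_{k\nu}$, so altogether
\begin{align*}
\Delta h^S_{\nu\nu}=& D^2_{p,p}h^S_{\nu\nu}-H\,D_\nu h^S_{\nu\nu}+4h_{pk}D_p h^S_{k\nu}\\
&+2(\nabla_k H)h^S_{k\nu}-2|A|^2 h^S_{\nu\nu}+2h_{pk}h_{p\ell}h^S_{k\ell}.
\end{align*}

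On the other hand, using the flow equation $\partial_t F=-H\nu$ together with Lemma \ref{L:MCF}(ii) ($\partial_t\nu=\nabla H$), the same chain rule gives $\partial_t h^S_{\nu\nu}=-H\,D_\nu h^S_{\nu\nu}+2(\nabla_k H)h^S_{k\nu}$. Subtracting, the $-H\,D_\nu h^S_{\nu\nu}$ term and the $2(\nabla_k H)h^S_{k\nu}$ term cancel exactly, leaving precisely the asserted identity. The ``in particular'' assertions then follow at once: since $\|h^S\|_{C^2(\mathbb{R}^3)}=O(1)$ by the extension estimates of Section \ref{SS:barrier}, the first derivative formula gives $\nabla h^S_{\nu\nu}=O(1+|A|)$, and each term on the right-hand side of the evolution equation is $O(|A|^2)$, $O(|A|)$, $O(|A|^2)$, $O(1)$ respectively, so $(\partial_t-\Delta)h^S_{\nu\nu}=O(1+|A|^2)$. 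As noted, I anticipate no real obstacle, only the need to be scrupulous about which connection differentiates what and to invoke Codazzi at exactly the step where it makes the $\nabla H$-terms coincide with those generated by $\partial_t$.
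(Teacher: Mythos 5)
Your computation is correct and follows exactly the paper's own route: the same first-derivative identity $\nabla_p h^S_{\nu\nu}=D_p h^S_{\nu\nu}+2h_{pk}h^S_{k\nu}$, the same second differentiation via the Weingarten relations and Codazzi (turning $\nabla_p h_{pk}$ into $\nabla_k H$), and the same cancellation of $-H\,D_\nu h^S_{\nu\nu}$ and $2(\nabla_k H)h^S_{k\nu}$ against the time derivative. Nothing to add.
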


\begin{proof}
The calculation is similar to Proposition \ref{P:P-estimate}, and it is even simpler in this case since $h^S_{\nu \nu}$ is just a function. Choose any orthonormal geodesic coordinates $\partial_1,\partial_2$ centered at a point $x \in \Sigma$. We have
\[
\nabla_p h^S_{\nu \nu} = D_p h^S_{\nu \nu} +2h_{pk} h^S_{k\nu}.
\]
Differentiating again, using Codazzi equation, we have
\[
\nabla_q (D_p h^S_{\nu \nu} )=D^2_{q,p} h^S_{\nu \nu}  - h_{qp} D_\nu h^S_{\nu \nu} +2h_{qk} D_p h^S_{k \nu} ,
\]
\[
\nabla_q(h_{pk} h^S_{k\nu}) = (\nabla_k h_{pq}) h^S_{k\nu} + h_{pk} (D_q h^S_{k\nu}  -h_{qk} h^S_{\nu \nu} +h_{q \ell} h^S_{k\ell}).
\]
Adding up the terms and summing over $p,q$, we have
\begin{align*}
\Delta h^S_{\nu \nu}  =& -2 |A|^2 h^S_{\nu \nu} + 2(\nabla_k H) h^S_{k \nu} -HD_\nu h^S_{\nu \nu}  +4h_{pk} D_p h^S_{k \nu} \\
& +2 h_{pk} h_{p \ell} h^S_{k \ell} +D^2_{p,p} h^S_{\nu \nu}.
\end{align*}
On the other hand, computing the time derivative gives
\[
\partial_t h^S_{\nu \nu}  =2(\nabla_k H) h^S_{k \nu} - H D_\nu h^S_{\nu \nu}.
\]
Combining the last two equations yield the desired formula.
\end{proof}

Using the lemma above, we derive the following bounds on the evolution of the first term in $g$. Recall that we always have $H \geq 1$ and $|A|^2 \leq H^2$.

\begin{lemma}
\label{L:g-1st-term-evolution}
We have the following evolution equations:
\[ (\partial_t -\Delta) |\nabla H-h^S_{\nu\nu}H\nu^T_S|^2 \leq C(S) H^2 |\nabla A|^2 +C(S) H^4 -2  |\nabla(\nabla H-h^S_{\nu\nu}H \nu_S^T)|^2 \]
\[ (\partial_t -\Delta) \frac{|\nabla H-h^S_{\nu\nu}H\nu^T_S|^2}{H} \leq  C(S)H |\nabla A|^2 +C(S)H^3.\]
\end{lemma}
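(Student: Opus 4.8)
The plan is to prove the two evolution inequalities in Lemma \ref{L:g-1st-term-evolution} by the now-standard computation of Huisken \cite{Huisken84} for $|\nabla H|^2$, carefully tracking the extra terms that come from the ``correction'' vector field $h^S_{\nu\nu}H\nu_S^T$. Write $W := \nabla H - h^S_{\nu\nu}H\nu_S^T$, a tangential vector field on $\Sigma_t$. The first step is to derive the evolution equation for $W$ itself. Using Lemma \ref{L:MCF}(v) for $(\partial_t-\Delta)H$, the commutation formula $[\partial_t-\Delta,\nabla]$ on $\Sigma_t$ (which produces terms of the schematic form $A*A*\nabla H$), Lemma \ref{L:h^S-evolution} for $(\partial_t-\Delta)h^S_{\nu\nu}$ together with the bound $\nabla h^S_{\nu\nu}=O(1+|A|)$, and the fact that $\nu_S$ (extended to $\R^3$) has $\|\nu_S^\flat\|_{C^2(\R^3)}=O(1)$ so that $\nu_S^T$ satisfies $\nabla\nu_S^T = O(1+|A|)$ and $(\partial_t-\Delta)\nu_S^T = O(1+|A|^2)$ along $\Sigma_t$ (these come from differentiating the projection and using Lemma \ref{L:MCF}(i),(ii)), one obtains
\[
(\partial_t-\Delta)W = A * A * W + A * A * \nabla A + (\text{lower order}),
\]
where every error term is $O(1+|A|^2)(1+|A|) = O(1+H^3)$ using convexity $|A|\le H$ and $H\ge 1$, and the leading genuine quadratic-in-$A$-times-$\nabla A$ term is $O(H|\nabla A|)$.

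The second step is to compute $(\partial_t-\Delta)|W|^2 = 2\langle W,(\partial_t-\Delta)W\rangle - 2|\nabla W|^2 + (\partial_t g^{ij})\cdot(\ldots)$. The metric-evolution term $\partial_t g^{ij}=2Hh^{ij}$ contributes a term bounded by $C H|W|^2 \le CH(|\nabla H|+O(H))^2 = O(H|\nabla A|^2 + H^3)$ — here I will use that $|\nabla H|\le 2|\nabla A|$ and $|W| \le |\nabla H| + O(H)$. The inner product $2\langle W,(\partial_t-\Delta)W\rangle$ produces $O(H)\cdot|W|\cdot(|\nabla A| + O(H^3))$; by Peter–Paul this is absorbed into $C(S)H^2|\nabla A|^2 + C(S)H^4$ (one must allow the $|A|^2$ prefactor from the genuine term $|A|^2|W|^2$ to become $H^2\cdot(|\nabla H|^2+H^2)$, i.e.\ $H^2|\nabla A|^2 + H^4$). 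The term $-2|\nabla W|^2$ is kept with its sign, giving exactly the stated first inequality
\[
(\partial_t-\Delta)|W|^2 \le C(S)H^2|\nabla A|^2 + C(S)H^4 - 2|\nabla W|^2.
\]

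For the second inequality, divide by $H$: using $(\partial_t-\Delta)(|W|^2/H) = \frac{1}{H}(\partial_t-\Delta)|W|^2 - \frac{|W|^2}{H^2}(\partial_t-\Delta)H + \frac{2}{H^2}\langle\nabla H, \nabla|W|^2\rangle - \frac{2|W|^2}{H^3}|\nabla H|^2$, plug in $(\partial_t-\Delta)H=|A|^2H$ (Lemma \ref{L:MCF}(v)), so the second term is $-|A|^2|W|^2/H \ge -H|W|^2 = O(H|\nabla A|^2 + H^3)$ with a favorable sign that we simply discard, and control the cross term $\frac{2}{H^2}\langle\nabla H,\nabla|W|^2\rangle = \frac{4}{H^2}\langle \nabla H, \langle \nabla W, W\rangle\rangle$ by Cauchy–Schwarz and the good negative term $-\frac{2}{H}|\nabla W|^2$ coming from the first inequality: $\frac{4}{H^2}|\nabla H||\nabla W||W| \le \frac{2}{H}|\nabla W|^2 + \frac{2}{H^3}|\nabla H|^2|W|^2$, and $\frac{|\nabla H|^2|W|^2}{H^3} \le \frac{|\nabla H|^2(|\nabla H|^2+O(H^2))}{H^3}$; this last quantity is bounded crudely by $C(S)H|\nabla A|^2 + C(S)H^3$ once one also recalls the pinching-type consequence that $|\nabla H|^2\le C H^2|\nabla A|^2/H^2 \cdot H^2$... more simply, $|\nabla H|^4/H^3 \le |\nabla H|^2 |\nabla A|^2 / H \le$ (after noting $|\nabla H| \le 2|\nabla A| \le 2|\nabla A|$ and $|\nabla A|^2 \le $ ... ) — I will instead bound $\frac{|\nabla H|^4}{H^3}$ by splitting via Peter–Paul against the $-\frac{2}{H}|\nabla W|^2$ and $-\frac{2|W|^2|\nabla H|^2}{H^3}$ terms, so that everything collapses to $C(S)H|\nabla A|^2 + C(S)H^3$. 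The main obstacle is bookkeeping: making sure the \emph{correction} terms — those arising because $W\ne\nabla H$, i.e.\ the contributions of $\nabla(h^S_{\nu\nu}H\nu_S^T)$ and its time-Laplacian — are all genuinely of the claimed order, in particular that no term of order $H^5$ or $H^2|\nabla A|^2$ survives in the second (divided) inequality where only $H^3$ and $H|\nabla A|^2$ are allowed; this requires using $|A|\le H$ and $H\ge 1$ at every step and being slightly careful that the dangerous term $|A|^2|W|^2/H$ has the \emph{right} sign after dividing by $H$ (it does, because of $(\partial_t-\Delta)H = |A|^2 H > 0$). I expect the $W$-evolution derivation (Step 1) to be the most delicate part, since it is where $(\partial_t-\Delta)\nu_S^T$ must be controlled.
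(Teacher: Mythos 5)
Your proposal is correct and follows essentially the same route as the paper, which organizes the computation as separate formulas for $\Delta|W|^2$ and $\partial_t|W|^2$ following Edelen's Lemma 9.6 rather than first deriving $(\partial_t-\Delta)W$, but uses the identical ingredients: the evolution of $h^S_{\nu\nu}$ from Lemma \ref{L:h^S-evolution}, the bounds $\nabla \nu_S^T=O(1)$ and $\partial_t \nu_S^T = O(H+|\nabla H|)$, and the commutation of $\partial_t-\Delta$ with $\nabla H$ (note the leading term $\nabla_i(|A|^2H)$ is $O(H^2|\nabla A|)$, not $O(H|\nabla A|)$, which is still consistent with your final bound). The digression about $|\nabla H|^4/H^3$ in the second inequality is unnecessary: after Cauchy--Schwarz the residual $+\tfrac{2}{H^3}|\nabla H|^2|W|^2$ cancels exactly against the quotient-rule term $-\tfrac{2|W|^2}{H^3}|\nabla H|^2$ that you already wrote down, which is indeed the mechanism you point to.
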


\begin{proof}
We write $V=\nu_S^T$ as before. From \cite[Lemma 9.6]{Edelen16}, we have
\begin{equation}
\label{E:V-bound}
\nabla V = O(1), \qquad \Delta V=O(H),
\end{equation}
\begin{equation}
\label{E:V-evolve}
\partial_t V_i = -H D_\nu V_i -H h_{ij} V_j -\partial_i H \langle V,\nu\rangle.
\end{equation}
Direct computation as in \cite[Lemma 9.6]{Edelen16} together with Lemma \ref{L:h^S-evolution} gives
\begin{align*}
\frac{1}{2} \Delta |\nabla H-h^S_{\nu\nu}HV|^2 = & |\nabla(\nabla H-h^S_{\nu\nu}HV)|^2 + (\nabla_i H - h^S_{\nu \nu} H V_i) \cdot \\
& \Big( \nabla_i \Delta H + \nabla_jH (Hh_{ij}-h_{ik}h_{kj})   \\
&  -(\Delta H) h^S_{\nu \nu} V_i - (\Delta h^S_{\nu \nu}) H V_i + O(H^2 +H|\nabla H|)   \Big)
\end{align*}
and
\begin{align*}
\frac{1}{2} \partial_t  |\nabla H-h^S_{\nu\nu}HV|^2  =& (\nabla_i H - h^S_{\nu \nu} H V_i) \cdot \Big( H h_{ij} \nabla_j H + \nabla_i(\Delta H +|A|^2 H)    \\
&  - (\Delta H+|A|^2 H) h^S_{\nu \nu} V_i - (\partial_t h^S_{\nu \nu}) H V_i + O(H^2 +|\nabla H|)    \Big).
\end{align*}
Combining the two equations above, we obtain
\begin{align*}
(\partial_t -\Delta)  |\nabla H-h^S_{\nu\nu}HV|^2 = & -2  |\nabla(\nabla H-h^S_{\nu\nu}H \nu_S^T)|^2  +2 (\nabla_i H - h^S_{\nu \nu} H V_i) \cdot \\
& \Big( \nabla_i(|A|^2 H) -|A|^2 H h^S_{\nu \nu} V_i + h_{ik}h_{kj} \nabla_j H      \\
& -H V_i (\partial_t -\Delta) h^S_{\nu\nu}  + O(H^2 +H|\nabla H|)  \Big)
\end{align*}
from which the first estimate follows. The first estimate then implies the second one as in \cite[Lemma 9.6]{Edelen16}.
\end{proof}

\begin{lemma}
\label{L:g-2nd-term-evolution}
We have the following evolution equations:
$$
(\partial_t-\Delta)H^3\ge-6H|\nabla H|^2+\frac{3}{2}H^5,
$$
$$
(\partial_t-\Delta) \left(H\left(|\tilde A|^2-\frac12\tilde H^2\right)\right)\le-\frac{1}{3} H|\nabla A|^2+C(S)|\nabla A|^2+C(S,\Sigma_0)H^{5-\sigma}.
$$
\end{lemma}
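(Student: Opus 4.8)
The plan is to establish the two evolution inequalities for $H^3$ and $H\left(|\tilde A|^2 - \frac{1}{2}\tilde H^2\right)$ separately, starting from the evolution equations already recorded in the excerpt.

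For the first inequality, I would simply compute $(\partial_t - \Delta) H^3 = 3H^2 (\partial_t - \Delta) H - 6H |\nabla H|^2$ by the product/chain rule, and then insert Lemma \ref{L:MCF}(v), namely $(\partial_t - \Delta) H = |A|^2 H$, to get $(\partial_t - \Delta) H^3 = 3 |A|^2 H^3 - 6 H |\nabla H|^2$. Since $\Sigma_t$ is convex, $|A|^2 \geq \frac{1}{2} H^2$ by Cauchy–Schwarz, giving $3|A|^2 H^3 \geq \frac{3}{2} H^5$, which yields the claimed lower bound. This part is routine.

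For the second inequality, I would write $w := |\tilde A|^2 - \frac{1}{2}\tilde H^2$ and compute
\[
(\partial_t - \Delta)(Hw) = w\,(\partial_t - \Delta)H + H\,(\partial_t - \Delta)w - 2\langle \nabla H, \nabla w\rangle.
\]
Using Lemma \ref{L:MCF}(v) for $(\partial_t - \Delta)H$ and Proposition \ref{P:Atilde-evolution}, equation (\ref{E:pinch-evolve}), for $(\partial_t - \Delta)w$, the leading quadratic-in-curvature terms combine: one gets $|A|^2 H w + 2|A|^2 H w = 3|A|^2 H w$ plus $-2H\left(|\nabla \tilde A|^2 - \frac{1}{2}|\nabla \tilde H|^2\right)$ plus the cross term $-2\langle \nabla H, \nabla w\rangle$ plus the error $O(H(1+|A|^3)) = O(1 + H^4)$ (using $|A| \leq H$ and $H \geq 1$). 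The key structural point, exactly as in \cite[Lemma 5.5]{Huisken84}, is that the good negative gradient term $-2H(|\nabla \tilde A|^2 - \frac{1}{2}|\nabla \tilde H|^2)$ dominates both the cross term $-2\langle \nabla H, \nabla w\rangle$ (after expanding $\nabla w$ and using Cauchy–Schwarz with Kato-type inequalities, plus (\ref{E:nablaA-Atilde-bound}) to pass between $\nabla A$ and $\nabla \tilde A$) and the now-harmless sign-indefinite terms, leaving a residual $-\frac{1}{3}H|\nabla A|^2$ after reserving a fixed fraction of the gradient term; the extra $C(S)|\nabla A|^2$ absorbs the discrepancy between $|\nabla \tilde A|$ and $|\nabla A|$, which by (\ref{E:nablaA-Atilde-bound}) costs at most $O(1+|A|) \cdot |\nabla A| \leq \frac{1}{6}H|\nabla A|^2 + C(S)|\nabla A|^2 / H \cdot H$, reorganized appropriately. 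Finally, the term $3|A|^2 H w$ must be bounded above: here I would invoke Theorem \ref{T:pinching-A-tilde}, which gives $w \leq \tilde C_0 \tilde H^{2-\sigma} = O(H^{2-\sigma})$, so $3|A|^2 H w \leq C(S,\Sigma_0) H^{5-\sigma}$; this is precisely where the $H^{5-\sigma}$ (rather than $H^5$) on the right-hand side comes from, and why the pinching estimate of Section \ref{S:pinching-A-circle} is a prerequisite.

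The main obstacle is the bookkeeping in the second inequality: one must carefully expand $\nabla w = \nabla|\tilde A|^2 - \tilde H \nabla \tilde H$ and the cross term $\langle \nabla H, \nabla w\rangle$, track all the $O(\cdot)$ error terms generated by replacing $\tilde A, \tilde H$ by $A, H$ via (\ref{E:A-Atilde-bound}) and (\ref{E:nablaA-Atilde-bound}), and verify via Young's inequality that every error not already of the form $C(S)|\nabla A|^2$ or $C(S,\Sigma_0)H^{5-\sigma}$ can be absorbed into the reserved fraction of $-H|\nabla A|^2$ or into $H^{5-\sigma}$ (using $H \geq 1$ so that lower powers of $H$ are dominated). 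The estimate $|\nabla\tilde A| = |\nabla A| + O(1+|A|)$ means $|\nabla \tilde A|^2 \geq \frac{1}{2}|\nabla A|^2 - C(1+H^2)$, which is the crucial inequality allowing the good term to be written in terms of $|\nabla A|$; no genuinely new idea beyond Huisken's original computation is needed, only care with the perturbation error terms. I would follow the structure of \cite[Lemma 5.5, Lemma 9.4]{Huisken84} and \cite[Lemma 9.x]{Edelen16} closely.
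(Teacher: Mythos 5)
Your treatment of the first inequality and your overall architecture for the second (product rule, Lemma \ref{L:MCF}(v) plus (\ref{E:pinch-evolve}), pinching to bound the reaction term $3|A|^2 H\bigl(|\tilde A|^2-\tfrac12\tilde H^2\bigr)$ by $C(S,\Sigma_0)H^{5-\sigma}$) match the paper. But two points in your handling of the gradient terms are wrong as stated. First, the cross term $-2\langle\nabla H,\nabla w\rangle$ is \emph{not} dominated by the good term via Cauchy--Schwarz and Kato alone: those give only $2|\nabla H||\nabla w|\leq 4|\nabla H||\nabla\tilde A|\,\sqrt{w}$ with $\sqrt{w}\lesssim H$, i.e.\ a term of size $CH|\nabla A|^2$ with $C$ well above $\tfrac23$, which the reserved $-\tfrac23 H|\nabla A|^2$ cannot absorb. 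The pinching estimate of Theorem \ref{T:pinching-A-tilde} must be invoked here as well (not only for the reaction term), to get $\sqrt{w}\leq C H^{1-\sigma/2}$; combined with Young's inequality $H^{1-\sigma/2}\leq \delta H+C(\delta,\sigma)$ this yields $\tfrac13 H|\nabla A|^2+C(S)|\nabla A|^2+C H^4$, which is exactly how the paper proceeds and is the one genuinely non-routine step of the lemma.

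Second, your ``crucial inequality'' $|\nabla\tilde A|^2\geq\tfrac12|\nabla A|^2-C(1+H^2)$ would destroy the argument if actually used: since $|\nabla\tilde H|^2$ can be as large as $2|\nabla\tilde A|^2$, halving the coefficient of $|\nabla A|^2$ turns $-2H\bigl(|\nabla\tilde A|^2-\tfrac12|\nabla\tilde H|^2\bigr)$ into something bounded below by $-H|\nabla A|^2+H|\nabla H|^2+\dots$, which need not be negative at all. You must keep the \emph{additive} estimate $|\nabla\tilde A|^2-\tfrac12|\nabla\tilde H|^2=|\nabla A|^2-\tfrac12|\nabla H|^2+O\bigl(H|\nabla A|+H^2\bigr)$ (which you do state elsewhere, so your write-up is internally inconsistent), and only then apply $|\nabla A|^2\geq\tfrac34|\nabla H|^2$ in dimension two to conclude $|\nabla A|^2-\tfrac12|\nabla H|^2\geq\tfrac13|\nabla A|^2$; the additive error is absorbed by Peter--Paul into $C(S)|\nabla A|^2+CH^4$. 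With these two corrections your proof coincides with the paper's.
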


\begin{proof}
The first inequality follows from \cite[Lemma 6.5]{Huisken84} and Cauchy-Schwarz inequality $|A|^2 \geq H^2/2$. From Lemma \ref{L:MCF}(v), (\ref{E:pinch-evolve}), (\ref{E:A-Atilde-bound}), (\ref{E:nablaA-Atilde-bound}) and the same calculations as in \cite[Lemma 6.5]{Huisken84}, we have
\begin{align*}
 (\partial_t-\Delta) \left(H\left(|\tilde{A}|^2-\frac12 \tilde{H}^2\right)\right) \le &3|A|^2 H  \left(|\tilde{A}|^2-\frac12 \tilde{H}^2\right) - 2H \left( |\nabla \tilde{A}|^2 - \frac{1}{2} |\nabla \tilde{H}|^2 \right) \\
 & +4 |\nabla H| |\nabla \tilde{A}| \sqrt{|\tilde{A}|^2-\frac12 \tilde{H}^2} +O(H^4)\\
 \le &3|A|^2 H  \left(|\tilde{A}|^2-\frac12 \tilde{H}^2\right) - 2H \left( |\nabla A|^2 - \frac{1}{2} |\nabla H|^2 \right) \\
 & +4 |\nabla H| |\nabla \tilde{A}| \sqrt{|\tilde{A}|^2-\frac12 \tilde{H}^2} +O(|\nabla A|^2+H^4)
 \end{align*}
 Applying the pinching estimate of Theorem \ref{T:pinching-A-tilde}, \cite[Lemma 2.2 (ii)]{Huisken84} and using (\ref{E:A-Atilde-bound}), (\ref{E:nablaA-Atilde-bound}) again, together with Peter-Paul inequality, we have
\begin{align*}
(\partial_t-\Delta) \left(H\left(|\tilde{A}|^2-\frac12 \tilde{H}^2\right)\right)\le & C(S,\tilde{C}_0) |A|^2 H^{3-\sigma} - \frac{2}{3} H |\nabla A|^2 \\
& + \frac{1}{3} H |\nabla A|^2 +C(S) |\nabla A|^2 +C(S,\tilde{C}_0) H^4 \\
\leq & - \frac{1}{3} H |\nabla A|^2 +C(S) |\nabla A|^2 + C(S,\Sigma_0) H^{5-\sigma},
\end{align*}
which proves our desired inequality.
\end{proof}

We are now ready to give the proof of Theorem \ref{T:H-gradient-bound}. Recall that we always have $H \geq 1$ and $|A|^2 \leq H^2$.

\begin{proof}[Proof of Theorem \ref{T:H-gradient-bound}]
The proof is again a maximum principle argument. We first analyse the boundary derivatives of $g$ term by term. By (\ref{E:N-term-1}), using triangle inequality and Peter-Paul inequality, we have
\begin{align*} 
N \left( \frac{|\nabla H - h^S_{\nu \nu} H\nu_S^T|^2}{H}  \right) 
\le& C(S) \frac{|\nabla H - h^S_{\nu \nu} H\nu_S^T|^2}{H} +C(S)H|\nabla H|\\
\le&C(S) \frac{|\nabla H - h^S_{\nu \nu} H\nu_S^T|^2}{H}+C(S)H|\nabla H - h^S_{\nu \nu} H\nu_S^T|+C(S)H^2\\
\le&C(S) \frac{|\nabla H - h^S_{\nu \nu} H\nu_S^T|^2}{H}+\frac{1}{4}H^3 + C(S).
\end{align*}
Next, using (\ref{E:N-term-2}), Lemma \ref{L:N-H}, Theorem \ref{T:pinching-A-tilde} and Peter-Paul inequality, we have
\begin{align*}
N\left( bH \Big( |\tilde{A}|^2-\frac{1}{2} \tilde{H}^2 \Big) \right) 
\le&bC(S)H\Big( |\tilde{A}|^2-\frac{1}{2} \tilde{H}^2 \Big) +bC(S)H^2\sqrt{\Big( |\tilde{A}|^2-\frac{1}{2} \tilde{H}^2 \Big) }\\
\le&bC(S,\Sigma_0)H^{3-\sigma} \\
\le & \frac{1}{4} H^3 +C(S,\Sigma_0,b).
\end{align*}

Next, using \eqref{E:N-|A|-tilde} and \eqref{E:zeta-boundary} (and that $h^S \geq 0$), together with Peter-Paul, we have
\[
N(ba |\tilde{A}|^2 -\zeta H^3) \le ba C(S)|\ti A|^2  - H^3 \leq -\frac{3}{4} H^3  +ba C(S)
\]
Combining all the above estimates, we obtain 
\begin{align*}
Ng\le&C(S) \frac{|\nabla H - h^S_{\nu \nu} H\nu_S^T|^2}{H}-\frac{1}{4} H^3 + C(S,\Sigma_0,a,b)\\
\le & C(S)g +\left(\eta C(S) -\frac{1}{4}\right)H^3- c C(S) + C(S,\Sigma_0,a,b).
\end{align*}
Hence, by choosing $\eta=\eta(S)>0$ sufficiently small and $c=c(S,\Sigma_0,a,b)>0$ sufficiently large, we then have $Ng \leq C(S) g$.
This implies for $d=d(S)>0$ sufficiently large, we have
$$
N(e^{-d\rho} g) \le -dg+C(S)g<0.
$$
Hence $e^{-d\rho} g$ cannot attain a maximum on the boundary $\partial \Sigma$ for these choices of the constants $c$ and $d$.

Now we proceed to study the evolution equation of $g$ term by term. 
First, from Lemma \ref{L:g-1st-term-evolution} we have
\[ (\partial_t -\Delta) \frac{|\nabla H-h^S_{\nu\nu}H\nu^T_S|^2}{H} \leq  C(S)H |\nabla A|^2 +C(S) H^3.\]
Next, Lemma \ref{L:g-2nd-term-evolution} implies
\begin{align*}
(\partial_t-\Delta) \left(bH\left(|\tilde A|^2-\frac12\tilde H^2\right)\right)
\le -\frac{b}{3} H|\nabla A|^2+bC(S)|\nabla A|^2+b C(S,\Sigma_0) H^{5-\sigma}.
\end{align*}
On the other hand, (\ref{E:|Atilde|^2-evolve}), (\ref{E:A-Atilde-bound}) and (\ref{E:nablaA-Atilde-bound}) imply
\begin{align*}
(\partial_t-\Delta)(ba|\tilde A|^2) \le & -2ba|\nabla \tilde{A}|^2 +2ba |A|^2 |\tilde{A}|^2 +ba C(S) H^2\\
\leq & -2ba|\nabla A|^2+ba C(S) H |\nabla A|+ba C(S) H^4 \\
\leq & - ba |\nabla A|^2 +ba C(S) H^4,
\end{align*}
where we have used Cauchy-Schwarz in the last inequality. Using Lemma \ref{L:MCF}(v), \eqref{E:zeta-interior} and Lemma \ref{L:g-2nd-term-evolution}, we have
\begin{align*}
(\partial_t-\Delta)(-\zeta H^3)\le& \zeta \left(6H|\nabla A|^2-\frac{3}{2}H^5\right) +\frac{55e^2}{\eta} H^3 +30e^2H^2 |\nabla H|\\
\le& -\frac{3e^{-2}}{2} \eta  H^5 + 6 e^2(\eta+5)  H|\nabla A|^2+\frac{55 e^2}{\eta} H^3.
\end{align*}
Combining all the above inequalities, we obtain (recall that $\eta < 1$)
\begin{align*}
(\partial_t-\Delta)g\le  \left( -\frac{b}{3} +C(S)  \right) &H |\nabla A|^2  + b (C(S)-a) |\nabla A|^2 \\
 &  -\frac{3e^{-2}}{2} \eta  H^5+ C(S,\Sigma_0,a,b,\eta) H^{5-\sigma}.
\end{align*}
By choosing $a=a(S)$ and $b=b(S)$ sufficiently large, using Peter-Paul inequality, we arrive at
\begin{equation}
\label{E:g-bound}
(\partial_t-\Delta)g\le C(S,\Sigma_0,a,b,\eta).
\end{equation}
We now consider the function $\varphi:=e^{-d\rho-ft} g$. Note that $\varphi$ cannot attain a boundary maximum. Moreover, we compute using (\ref{E:g-bound}), \eqref{E:rho-estimate} and \eqref{E:rho-estimate-2} that
\begin{align*}
(\partial_t-\Delta)\varphi =&  -f \varphi +e^{-d\rho-ft}(\partial_t-\Delta)g + e^{-ft} g (\partial_t - \Delta) (e^{-d \rho}) - 2 e^{-ft} \nabla e^{-d\rho} \cdot \nabla g\\
\le &-f \varphi +C(S,\Sigma_0,a,b,d,\eta) +C(d,\eta) \varphi  - 2 e^{-ft} \nabla e^{-d\rho} \cdot \nabla g.
\end{align*}
Suppose we are looking at a spatial interior maximum of $\varphi$. Then we have $\nabla\varphi=0$ at this point, which implies $\nabla g =dg \nabla \rho$, hence the gradient term above can be estimated using \eqref{E:rho-estimate}
$$
- 2 e^{-ft} \nabla e^{-d\rho} \cdot \nabla g \leq C(d) \varphi.
$$
Putting this back to the inequality above, we have
\[ (\partial_t-\Delta)\varphi \leq (-f+C(d,\eta)) \varphi + C(S,\Sigma_0,a,b,d,\eta).\]
By choosing $f=f(d,\eta)>0$ sufficiently large, we obtain that the maximum of $\varphi$ can at most increase linearly with time. Finally, observe that the constants $a,b,d,f$ only depend on $S$, $\eta$ has to be small depending only on $S$ and that $c=c(S,\Sigma_0,a,b)$ large enough. Moreover, $T=T(S,\Sigma_0)$. Therefore, we have
$$
\varphi(x,t)\le C(S,\Sigma_0,\eta)
$$
holds on $\Sigma \times [0,T)$. Since $T<\infty$ and $\rho$ is bounded by (\ref{E:rho-estimate}), we deduce that 
$$
g(x,t)\le C(S,\Sigma_0,\eta)
$$
holds on $\Sigma \times [0,T)$. Dropping the nonnegative terms in $g$, we have
$$
|\nabla H-h^S_{\nu\nu}H\nu_S^T|^2\le  \zeta H^4 +C(S,\Sigma_0,\eta)H.
$$
Thus the result follows from the bound on $\zeta$ in (\ref{E:zeta-interior}), the triangle inequality and Peter-Paul inequality.
\end{proof}

\bibliographystyle{plain}
\bibliography{references}

\end{document}